\title[Categorical and K-theoretic DT of $\mathbb{C}^3$ (part I)]{Categorical and K-theoretic Donaldson-Thomas theory of $\mathbb{C}^3$ (part I)}
\author{Tudor P\u adurariu and Yukinobu Toda}
\newtheorem{thm}{Theorem}[section]
\newtheorem{cor}[thm]{Corollary}
\newtheorem{prop}[thm]{Proposition}
\newtheorem{lemma}[thm]{Lemma}
\theoremstyle{definition}
\newtheorem{defn}[thm]{Definition}
\newtheorem{thm*}[thm]{Theorem$^*$}
\newtheorem{remark}[thm]{Remark}
\newtheorem{example}[thm]{Example}
\newtheorem{step}{Step}
\newcommand{\comment}[1]{}
\renewcommand{\leq}{\leqslant}
\renewcommand{\geq}{\geqslant}
\newcommand{\OO}{\mathcal{O}}
\newcommand{\X}{\mathcal{X}}
\newcommand{\Hilb}{\operatorname{Hilb}}
\newcommand{\id}{\operatorname{id}}
\newcommand{\Hom}{\operatorname{Hom}}
\newcommand{\Spec}{\operatorname{Spec}}
\newcommand{\C}{\mathbb{C}}
\newcommand{\ee}{\underline{e}}
\newcommand{\dd}{\underline{d}}
\newcommand{\relmiddle}[1]{\mathrel{}\middle#1\mathrel{}}
\newcommand{\Tr}{\mathop{\mathrm{Tr}}\nolimits}
\tikzstyle{block}=[draw=black, width=1cm, minimum height=2cm, align=center] 
\tikzstyle{block2}=[draw=black, text width=2cm, minimum height=1cm, align=center] 
\tikzstyle{block3}=[draw=black, text width=2cm, minimum height=1cm, align=center] 
\begin{document}
\maketitle

\begin{abstract}
    We begin the study of categorifications of Donaldson-Thomas 
    invariants 
    associated with Hilbert schemes of points on the three-dimensional affine space, which 
    we call DT categories. 
    The DT category is defined to be the category of matrix factorizations
    on the non-commutative Hilbert scheme with a super-potential
    whose critical locus is the Hilbert scheme of points.
    
    The first main result in this paper is the construction of semiorthogonal decompositions
    of DT categories,
    which can be regarded as categorical wall-crossing formulae
    of the framed triple loop quiver. 
    Each summand
    is given by the categorical Hall product of some subcategories of 
    matrix factorizations, called quasi-BPS 
    categories. 
    They are categories of matrix factorizations on twisted versions of noncommutative resolutions of singularities considered by \v{S}penko--Van den Bergh,
    and were used by the first author to
    prove a PBW theorem for K-theoretic Hall algebras.
    
We next construct explicit objects of quasi-BPS categories via Koszul duality 
equivalences, and show that they form a basis in the torus localized K-theory.
These computations may be regarded as a numerical K-theoretic analogue 
in dimension three of the McKay correspondence for 
Hilbert schemes of points. 
In particular, the torus localized K-theory of DT categories
has a basis whose cardinality is the number of plane partitions, giving a K-theoretic
analogue of MacMahon's formula. 
\end{abstract}

\section{Introduction}

\renewcommand{\thefootnote}{\fnsymbol{footnote}} 
\footnotetext{\emph{$2020$ Mathematics Subject Classification.} Primary: 14N35, 18N25. Secondary: 19E08.}     
\renewcommand{\thefootnote}{\arabic{footnote}}

\subsection{Categorical DT theory for Hilbert schemes of points}
For a variety $X$, the Hilbert scheme of points
\begin{align*}
	\Hilb(X, d)
	\end{align*}
parametrizes zero-dimensional closed subschemes in 
$X$ with length $d$. 
When $\dim X \leq 2$, 
$\Hilb(X, d)$ is a smooth variety 
 and an important geometric object 
in classical algebraic geometry, representation theory, and 
mathematical physics (cf.~\cite{NLecture}).
The derived category of coherent sheaves on 
$\Hilb(X, d)$ when $\dim X \leq 2$ has been extensively studied in the literature, see Subsection~\ref{subsec:catHilb}. 
The purpose of this paper is to study categories of coherent sheaves associated to 
$\Hilb(X, d)$ for $X=\mathbb{C}^3$ from the viewpoint of 
categorification of Donaldson-Thomas (DT) invariants and 
their wall-crossing formulae. 

Hilbert schemes in higher dimensions are very singular 
 in general (e.g. not irreducible, non-reduced), 
 and the usual derived categories are not well-behaved. 
Instead, we use the following critical locus description of 
$\Hilb(\mathbb{C}^3, d)$, which is often used in DT theory. 
Let
$\mathrm{NHilb}(d)$ be the non-singular quasi-projective 
variety, called the \textit{non-commutative 
Hilbert scheme}, defined by 
\begin{align}\label{intro:NHilb}
	\mathrm{NHilb}(d):=\left(V\oplus \text{Hom}(V, V)^{\oplus 3}\right)^{\text{ss}}/GL(V),
	\end{align}
where $V$ is a $d$-dimensional vector space and 
$(-)^{\text{ss}}$ is the GIT semistable locus 
consisting of $(v, X, Y, Z)$ such that $V$ is generated by $v$ under the 
action of $(X, Y, Z)$. 
 Then $\mathrm{Hilb}(\mathbb{C}^3, d)$ is the critical 
 locus of the regular function 
\begin{align}\label{intro:trace}
	\Tr W \colon  \mathrm{NHilb}(d)\to\mathbb{C}, \ 
	(v, X, Y, Z) \mapsto \Tr Z[X, Y]. 
	\end{align}

We define the dg-category
\begin{align}\label{intro:DTcat}
	\mathcal{DT}(d) :=\mathrm{MF}(\mathrm{NHilb}(d), \Tr W),
	\end{align}
where the right hand side is the category of matrix factorizations 
on $\mathrm{NHilb}(d)$ with respect to the function $\Tr W$. 
It has objects
 $(\alpha \colon \mathcal{F}\rightleftarrows \mathcal{G} \colon \beta)$,
where $\mathcal{F}$ and $\mathcal{G}$ are coherent 
sheaves on $\mathrm{NHilb}(d)$ and the compositions $\alpha\circ\beta$ and $\beta\circ\alpha$ are multiplications by $\Tr W$.
One can also consider graded categories of matrix factorizations,
and equivariant (graded or not) matrix factorizations 
with respect to the action of 
the subtorus $T \subset (\mathbb{C}^{\ast})^3$ preserving the 
function $\Tr W$. 

The above category (\ref{intro:DTcat}) and its variants
are called \textit{DT categories}, 
as they recover the DT invariant by taking the Euler characteristic
of their periodic cyclic homologies. 
The purpose of this paper is to investigate
the structure of DT categories. 
This paper contains the first results on the categorical study of Hilbert schemes of points in dimension greater than $2$.    
 
\subsection{Semiorthogonal decompositions of DT categories}
The Hilbert scheme of points naturally fits into the following diagram: 
\begin{align}\label{dia:Hilb}
    \xymatrix{
    \mathrm{Hilb}(X, d) \ar[rd] & & X^{\times d}/\mathfrak{S}_d \ar[ld] \\
    & \mathrm{Sym}^d(X). &
    }
\end{align}
When $X=\mathbb{C}^2$, as we recall in Subsection~\ref{subsec:catHilb},
both the Hilbert scheme 
$\mathrm{Hilb}(X, d)$ and the quotient 
stack $X^{\times d}/\mathfrak{S}_d$
are smooth and they 
are derived equivalent. 
For $X=\mathbb{C}^3$ and $d=2$, 
both sides of (\ref{dia:Hilb}) are still smooth.
However, they are not derived equivalent,
and $D^b(\mathrm{Hilb}(X, 2))$ has a nontrivial semiorthogonal 
decomposition (see~Example~\ref{exam:d2}). 
Our first main result gives an analogue of such 
semiorthogonal decompositions of DT categories for arbitrary $d$: 
\begin{thm}\emph{(Theorem~\ref{MacMahonthm})}\label{thm:intro1}
There is a semiorthogonal decomposition 
	\[\mathcal{DT}(d)=
	\left\langle \boxtimes_{i=1}^k \mathbb{S}(d_i)_{v_i+d_i\left(\sum_{i>j}d_j-\sum_{j>i}d_j\right)}\right\rangle,\] where the right hand side consists of all tuples $A=(d_i, v_i)_{i=1}^k$ with $\sum_{i=1}^k d_i=d$ such that  \begin{equation}\label{ineq:intro1}
		0 \leq \frac{v_1}{d_1}<\cdots<\frac{v_k}{d_k} <1.\end{equation}
Moreover the fully-faithful functor 
	\begin{align}\label{intro:thm:hall}
		\boxtimes_{i=1}^k \mathbb{S}(d_i)_{v_i+d_i\left(\sum_{i>j}d_j-\sum_{j>i}d_j\right)}
		\to \mathcal{DT}(d)
	\end{align}
	is induced by the categorical Hall product for quivers 
	with super-potentials.
	\end{thm}

For $(d, w) \in \mathbb{N} \times \mathbb{Z}$, 
the category $\mathbb{S}(d)_w$ 
in Theorem~\ref{thm:intro1} is defined 
to be a certain dg-subcategory
\begin{align}\label{intro:S}
	\mathbb{S}(d)_w:=\mathrm{MF}(\mathbb{M}(d)_w, \Tr W)
	\subset \mathrm{MF}(\mathcal{X}(d), \Tr W),
	\end{align}
where $\mathcal{X}(d)$ is
the moduli stack of $d$-dimensional representations of the triple loop quiver
and $\Tr W$ is given as in (\ref{intro:trace}). 
It consists of matrix factorizations with factors coherent sheaves on $\X(d)$
whose weights with respect to the maximal torus $T(d) \subset GL(d)$
are contained in a certain polytope
in the weight lattice, and weight $w$ with respect to
the diagonal cocharacter of $T(d)$. 
The categorical Hall product 
in (\ref{intro:thm:hall})
is introduced and studied by the first author~\cite{P0}. 
We remark that each semiorthogona summand is equivalent to 
$\boxtimes_{i=1}^k\mathbb{S}(d_i)_{v_i}$, but 
we use the weight 
$v_i+d_i(\sum_{i>j}d_j-\sum_{j>i}d_j)$
in order to specify the fully-faithful functor
as a categorical Hall product.

The categories $\mathbb{M}(d)_w$ from \eqref{intro:S}
first appeared in 
the work of \v{S}penko--Van den Bergh~\cite{SVdB} on constructions of (twisted)
non-commutative resolutions of quotient singularities
of quasi-symmetric representations of reductive groups. 
They were later used by Halpern-Leistner--Sam~\cite{hls}  
to prove the magic window theorem and settle several cases of the D/K equivalence 
conjecture. 
The first author also used the subcategories (\ref{intro:S})
to prove PBW theorem for K-theoretic Hall algebras
for quivers with super-potentials~\cite{P}. 
We refer to the subcategory (\ref{intro:S}) as a \textit{quasi-BPS category}
since it is closely related (but not completely analogous) 
to the 
BPS sheaf by Davison--Meinhardt in cohomological DT theory \cite{DM}.  
We expect that the subcategories (\ref{intro:S}) cannot be further decomposed into 
non-trivial semiorthogonal decompositions, so that they are
the smallest 
building blocks
of the DT category $\mathcal{DT}(d)$.

Note that Theorem~\ref{thm:intro1} in particular implies the existence of a fully-faithful 
functor 
\begin{align}\label{intro:FF}
\mathbb{S}(d)_0 \hookrightarrow \mathcal{DT}(d).
\end{align}
Our point of view is that $\mathbb{S}(d)_0$ may be
regarded as a categorical 
DT-analogue of $D^b\left((\mathbb{C}^3)^{\times d}/\mathfrak{S}_d\right)$, 
and the above fully-faithful functor (\ref{intro:FF}) as 
a derived McKay correspondence for the permutation action of $\mathfrak{S}_d$
on $(\mathbb{C}^3)^{\times d}$.  
We will discuss this point of view in Subsection~\ref{subsection8}. 

\subsection{Generators of quasi-BPS categories via Koszul duality}
From Theorem~\ref{thm:intro1}, it is important to investigate the 
structure of $\mathbb{S}(d)_w$. 
However, it is not clear whether 
$\mathbb{S}(d)_w$ is empty or not since it is not clear whether a
given vector bundle on $ \mathcal{X}(d)$ satisfying some condition 
on $T(d)$-weights fits into a matrix factorization of $\Tr W$. 

The next purpose of this paper is to construct explicit objects in $\mathbb{S}(d)_w$ via Koszul duality equivalences, 
and show that these objects generate $\mathbb{S}(d)_w$ in 
torus localized K-theory. Let $\mathcal{C}(d)$ be the derived 
stack of commuting matrices of size $d$, or, equivalently, 
the derived moduli stack of zero-dimensional 
coherent sheaves on $\mathbb{C}^2$ with length $d$. 
The Koszul duality equivalence \cite{I, Hirano}, also called dimensional reduction in the literature, says that there is an 
equivalence 
\begin{align*}
	D^b(\mathcal{C}(d)) \stackrel{\sim}{\to}
	\mathrm{MF}^{\mathrm{gr}}(\mathcal{X}(d), \Tr W).
	\end{align*}
Then there is a subcategory $\mathbb{T}(d)_w \subset D^b(\mathcal{C}(d))$ 
corresponding to $\mathbb{S}^{\mathrm{gr}}(d)_w$ under the above equivalence. 
For each $(d, v) \in \mathbb{N} \times \mathbb{Z}$, we construct 
an object $\mathcal{E}_{d, v} \in \mathbb{T}(d)_v$
explicitly, using the derived stack of filtrations of zero-dimensional 
coherent sheaves on $\mathbb{C}^2$. The complexes $\mathcal{E}_{d, v}$ have also been considered by Gorsky--Negu\c{t} \cite[Section 1.4]{GoNeg}.
It turns out that, 
up to a constant in $\mathbb{K}:=K(BT)=\mathbb{Z}[q_1^{\pm 1}, q_2^{\pm 1}]$, 
the images of the above objects under a natural map 
\begin{align*}
	\iota_\ast \colon G_T(\mathcal{C}(d)) \to \mathbb{K}[z_1^{\pm 1}, \ldots, z_d^{\pm 1}]^{\mathfrak{S}_d}
\end{align*}
are part of a basis of the elliptic Hall algebra considered by Negu\c{t} in \cite{N}. Using \cite[Equation~(2.11)]{N}, we show in Subsection \ref{subsection:generators} that:
\begin{thm}\emph{(Theorem~\ref{prop:gen})}\label{thm:intro2}
	Let $(d, v)\in \mathbb{N}\times\mathbb{Z}$ be coprime integers, let $n\in\mathbb{N}$,
	and let $\mathbb{F}$ be the fraction field of $\mathbb{K}$. 
	The $\mathbb{F}$-vector
	space $K_T\left(\mathbb{T}(nd)_{nv}\right)\otimes_{\mathbb{K}}\mathbb{F}$ has a basis $\mathcal{E}_{n_1d, n_1v}\ast \cdots \ast \mathcal{E}_{n_kd, n_kv}$, where $n_1, \ldots, n_k\geq 1$ and $\sum_{i=1}^kn_i=n$. 
	Here $\ast$ is the categorical Hall product on $D^b(\mathcal{C}(d))$. 
\end{thm}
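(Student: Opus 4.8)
The plan is to transport the statement into the localized elliptic Hall (equivalently, shuffle) algebra along the map $\iota_\ast$, where the analogous basis statement is supplied by Negu\c{t}. Two inputs make this possible. First, assembled over all $d$, the maps $\iota_\ast \colon G_T(\mathcal{C}(d)) \to \mathbb{K}[z_1^{\pm 1}, \ldots, z_d^{\pm 1}]^{\mathfrak{S}_d}$ intertwine the K-theoretic Hall product $\ast$ with the shuffle product on the target; this K-theoretic shuffle realization I would take from the earlier sections and from \cite{P}. Second, by the explicit description of $\mathcal{E}_{kd,kv}$ through the derived stack of filtrations — and the computation carried out in Subsection~\ref{subsection:generators}, matching \cite[Section~1.4]{GoNeg} — one has $\iota_\ast(\mathcal{E}_{kd,kv}) = c_k\, P_k$ for a nonzero $c_k \in \mathbb{K}$, where $P_k$ is the corresponding slope-$v/d$ generator of the elliptic Hall algebra of \cite{N}. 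Since $\iota_\ast$ is an algebra homomorphism, this gives
\[
\iota_\ast\big(\mathcal{E}_{n_1 d,\, n_1 v} \ast \cdots \ast \mathcal{E}_{n_k d,\, n_k v}\big) = \Big(\prod_{i=1}^{k} c_{n_i}\Big)\, P_{n_1}\cdots P_{n_k},
\]
the product on the right being the shuffle product.

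Now I would invoke \cite[Equation~(2.11)]{N}: for $(d,v)$ coprime the slope-$v/d$ part of the elliptic Hall algebra is the polynomial algebra on the generators $P_k$, so that the monomials $P_{n_1}\cdots P_{n_k}$ with $\sum_i n_i = n$ form a basis of its degree-$n$ component $B_{v/d}[n]$. Because this subalgebra is commutative, the class above depends only on the partition $\{n_1,\ldots,n_k\}$ of $n$, so the Hall products in the statement are indexed by partitions of $n$. Linear independence of these Hall products is then immediate: any $\mathbb{F}$-linear relation among them maps under $\iota_\ast$ to a relation among the monomials $P_{n_1}\cdots P_{n_k}$ scaled by the nonzero constants $\prod_i c_{n_i}$; as these monomials are linearly independent by Negu\c{t}, all coefficients vanish.

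It remains to show that the Hall products span $K_T(\mathbb{T}(nd)_{nv})\otimes_{\mathbb{K}}\mathbb{F}$. For this I would use that, after the base change to $\mathbb{F}$, the composite of the inclusion $K_T(\mathbb{T}(nd)_{nv}) \to G_T(\mathcal{C}(nd))$ with $\iota_\ast$ is injective and lands inside $B_{v/d}[n]$ — the latter containment being exactly the translation of the weight (quasi-BPS polytope) bounds defining $\mathbb{T}(nd)_{nv}$ into the slope condition in the shuffle algebra. Since the Hall products already lie in $K_T(\mathbb{T}(nd)_{nv})$ and their images span $B_{v/d}[n]$ by the previous paragraph, the image of $\iota_\ast$ is all of $B_{v/d}[n]$; combined with injectivity, $\iota_\ast$ is then an isomorphism onto $B_{v/d}[n]$, and pulling back Negu\c{t}'s basis yields the asserted basis (and, as a by-product, the dimension is $p(n)$).

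The step I expect to be the main obstacle is the injectivity of $\iota_\ast$ after localization, together with the precise slope-containment of its image: the former requires a careful fixed-point (Thomason localization) analysis of the derived commuting stack $\mathcal{C}(nd)$ and control of the kernel of $\iota_\ast$ before inverting classes in $\mathbb{K}$, while the latter requires matching the defining inequalities of the quasi-BPS polytope with the slope subalgebra of \cite{N}. Once these are in place — and given the generator computation $\iota_\ast(\mathcal{E}_{kd,kv}) = c_k P_k$ already available — the basis property is a formal consequence of \cite[Equation~(2.11)]{N} and the algebra-homomorphism property of $\iota_\ast$.
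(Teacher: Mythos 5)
Your overall strategy is the same as the paper's: push everything into the shuffle algebra, identify the classes of $\mathcal{E}_{kd,kv}$ with (constants times) Negu\c{t}'s slope generators, and invoke \cite[Equation~(2.11)]{N}. Your linear-independence argument is complete as stated: it only needs the algebra-homomorphism property of the pushforward (Lemma~\ref{lem:shuffle}) and the computation of Lemma~\ref{lem:iE} (with the small caveat that the match with Negu\c{t}'s $A_{d,v}$ holds after composing with the algebra isomorphism $\mathcal{S}\xrightarrow{\sim}\mathcal{S}'$ of (\ref{mor:F})). Moreover, the injectivity you flag as a main obstacle is not one: after localization, $\iota_\ast$ is an isomorphism onto the subalgebra $\mathcal{S}$ generated by $z_1^{l}$; this is exactly \cite[Theorem~4.6]{N2}, which the paper cites as (\ref{isom:S}), so no fixed-point or Thomason-localization analysis is required.

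The genuine gap is in your spanning step, which rests on two unproven inputs: (i) injectivity of $K_T(\mathbb{T}(nd)_{nv})\otimes_{\mathbb{K}}\mathbb{F}\to G_T(\mathcal{C}(nd))\otimes_{\mathbb{K}}\mathbb{F}$, and (ii) containment of its $\iota_\ast$-image in the slope piece $B_{v/d}[n]$. Claim (ii) is not ``exactly the translation'' of the polytope inequalities: the polytope $\mathbf{W}(nd)_{nv}$ constrains which characters $\Gamma_{GL(nd)}(\chi)$ may appear in $i_\ast$ of an object, whereas Negu\c{t}'s slope subalgebra is cut out by limit/degree conditions inside the shuffle algebra, and matching the two is a substantive statement that this paper never proves --- indeed it only follows \emph{a posteriori} from the theorem itself, so assuming it as input is close to circular. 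The idea that replaces both (i) and (ii) in the paper is the semiorthogonal decomposition of $D^b_T(\mathcal{C}(nd))$ into Hall products of quasi-BPS categories, Theorem~\ref{thm:com:sod} and its equivariant version (\ref{sod:C2}), whose K-theoretic shadow is the direct-sum decomposition (\ref{decompose:G}) of the localized Hall algebra. Given that decomposition, spanning is formal: every element of the basis (\ref{basis:S}) lies in exactly one summand --- the equal-slope products lie in $K_T(\mathbb{T}_T(nd)_{nv})$ by Lemma~\ref{lemma:N(d)}, the mixed-slope products lie in other summands --- and a basis of a direct sum each of whose members lies in a single summand restricts to a basis of each summand. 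Without this (or an actual proof of (ii)), your argument establishes only that the Hall products are linearly independent, i.e. the inequality $\dim_{\mathbb{F}}K_T(\mathbb{T}(nd)_{nv})\otimes_{\mathbb{K}}\mathbb{F}\geq p_2(n)$, not the basis statement.
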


As we will recall in Subsection~\ref{subsec:intro:DTreview}, the generating series of classical 
DT invariants for zero-dimensional subschemes in $\mathbb{C}^3$ 
is the MacMahon function, whose coefficients are 
the number of 3D partitions (also called plane partitions in the literature). 
By combining Theorem~\ref{thm:intro2} with Theorem~\ref{thm:intro1}, we obtain the following 
K-theoretic analogue of MacMahon's formula: 
\begin{cor}\emph{(Corollary~\ref{thm:dim})}\label{intro:cor1}
	Let $p_3(d)$ be the number of 3D partitions of $d$. 
	We have the identity 
	\begin{align*}
		\dim_{\mathbb{F}}K_{T}(\mathcal{DT}(d))\otimes_{\mathbb{K}} \mathbb{F}
		=p_3(d). 
		\end{align*}
	\end{cor}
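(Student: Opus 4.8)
The plan is to feed the semiorthogonal decomposition of Theorem~\ref{thm:intro1} into the additivity of localized $K$-theory, reduce the $K$-theory of each summand to that of the quasi-BPS pieces, evaluate the latter with Theorem~\ref{thm:intro2}, and finally assemble everything into the MacMahon function by a generating-function manipulation. First I would use that $T$-equivariant $K$-theory is additive along semiorthogonal decompositions and compatible with base change along $\mathbb{K}\to\mathbb{F}$, so that Theorem~\ref{thm:intro1} gives
\[
K_T(\mathcal{DT}(d))\otimes_{\mathbb{K}}\mathbb{F}
=\bigoplus_A\left(K_T\Big(\boxtimes_{i=1}^k\mathbb{S}(d_i)_{w_i}\Big)\otimes_{\mathbb{K}}\mathbb{F}\right),
\qquad w_i=v_i+d_i\Big(\sum_{i>j}d_j-\sum_{j>i}d_j\Big),
\]
the sum running over the tuples $A=(d_i,v_i)_{i=1}^k$ of Theorem~\ref{thm:intro1}. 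Applying a Künneth formula for the external Hall product over $BT$ after base change to $\mathbb{F}$, each summand factors as $\bigotimes_{i=1}^k K_T(\mathbb{S}(d_i)_{w_i})\otimes_{\mathbb{K}}\mathbb{F}$, and hence
\[
\dim_{\mathbb{F}}K_T(\mathcal{DT}(d))\otimes_{\mathbb{K}}\mathbb{F}
=\sum_A\prod_{i=1}^k\dim_{\mathbb{F}}K_T(\mathbb{S}(d_i)_{w_i})\otimes_{\mathbb{K}}\mathbb{F}.
\]

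Next I would evaluate each factor. Twisting by the determinant character shifts the diagonal weight by $d_i$, giving $\mathbb{S}(d_i)_{w}\cong\mathbb{S}(d_i)_{w+d_i}$; since $w_i\equiv v_i\pmod{d_i}$ and $0\le v_i<d_i$, this reduces the computation to $\mathbb{S}(d_i)_{v_i}$. Its graded version $\mathbb{S}^{\mathrm{gr}}(d_i)_{v_i}$ corresponds under the Koszul duality equivalence $D^b(\mathcal{C}(d_i))\xrightarrow{\sim}\mathrm{MF}^{\mathrm{gr}}(\mathcal{X}(d_i),\Tr W)$ to $\mathbb{T}(d_i)_{v_i}$, and forgetting the grading specializes the extra grading weight and preserves the localized rank, so $\dim_{\mathbb{F}}K_T(\mathbb{S}(d_i)_{v_i})\otimes_{\mathbb{K}}\mathbb{F}=\dim_{\mathbb{F}}K_T(\mathbb{T}(d_i)_{v_i})\otimes_{\mathbb{K}}\mathbb{F}$. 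Writing $(d_i,v_i)=(n_ib_i,n_ia_i)$ with $\gcd(a_i,b_i)=1$, Theorem~\ref{thm:intro2} identifies this dimension with the number $p(n_i)$ of partitions of $n_i$: the basis elements are indexed by partitions, since the Hall products $\mathcal{E}_{n_1b_i,n_1a_i}\ast\cdots$ along a fixed slope commute.

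Then I would assemble the generating function $Z(q):=\sum_{d\ge 0}\dim_{\mathbb{F}}K_T(\mathcal{DT}(d))\otimes_{\mathbb{K}}\mathbb{F}\cdot q^d$. Because the slopes $v_i/d_i$ in each admissible tuple $A$ are strictly increasing in $[0,1)$, giving such an $A$ amounts to choosing, for each coprime slope $a/b\in[0,1)\cap\mathbb{Q}$, either to omit it or to select a multiplicity $n\ge 1$ contributing $(d_i,v_i)=(nb,na)$ and a factor $p(n)$. This factorizes $Z(q)$ over slopes:
\[
Z(q)=\prod_{\substack{a/b\in[0,1)\cap\mathbb{Q}\\ \gcd(a,b)=1}}\left(\sum_{n\ge 0}p(n)\,q^{nb}\right)
=\prod_{\substack{a/b\\ \gcd(a,b)=1}}\ \prod_{m\ge 1}\frac{1}{1-q^{mb}}.
\]
Since there are exactly $\phi(b)$ coprime slopes $a/b\in[0,1)$ with denominator $b$, and $\sum_{b\mid N}\phi(b)=N$, regrouping the factors by $N=mb$ yields $Z(q)=\prod_{N\ge 1}(1-q^N)^{-N}$, the MacMahon function. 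Comparing coefficients of $q^d$ gives $\dim_{\mathbb{F}}K_T(\mathcal{DT}(d))\otimes_{\mathbb{K}}\mathbb{F}=p_3(d)$.

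The main obstacle will be the two structural inputs behind the factorization of the dimension: the multiplicativity (Künneth formula) of localized $T$-equivariant $K$-theory under the external categorical Hall product, and the comparison identifying the localized $K$-theory of the ungraded quasi-BPS category $\mathbb{S}(d)_w$ of Theorem~\ref{thm:intro1} with that of the graded category $\mathbb{T}(d)_w$ of Theorem~\ref{thm:intro2}, together with the periodicity $\mathbb{S}(d)_w\cong\mathbb{S}(d)_{w+d}$. Once these identifications are in place, the remaining work — reorganizing the product over slopes and invoking $\sum_{b\mid N}\phi(b)=N$ — is a routine generating-function computation.
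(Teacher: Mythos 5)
Your proposal follows the same skeleton as the paper's proof of Corollary~\ref{thm:dim}: additivity of $K$-theory over the semiorthogonal decomposition of Theorem~\ref{thm:intro1}, reduction of each factor to $\mathbb{T}(d_i)_{v_i}$ via determinant-twist periodicity and Koszul duality, the dimension count from Theorem~\ref{thm:intro2}, and the identical slope-by-slope generating function manipulation (the paper's bijection $(k,a,b)\mapsto ka$ is your identity $\sum_{b\mid N}\phi(b)=N$). The genuine gap is the step you yourself flag as the first ``structural input'': there is no general K\"unneth formula for $K$-theory of a box product of dg-categories over $BT$, even after base change to $\mathbb{F}$ --- the external product map $K_T(\mathcal{A})\otimes_{\mathbb{K}}K_T(\mathcal{B})\to K(\mathcal{A}\boxtimes\mathcal{B})$ need not be an isomorphism --- so it cannot be invoked as a black box. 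The paper never proves an abstract K\"unneth theorem; instead, the factorization is extracted from the same shuffle-algebra computation that proves Theorem~\ref{thm:intro2}: the classes $[\mathcal{E}_{d_1,v_1}]\ast\cdots\ast[\mathcal{E}_{d_k,v_k}]$ over all weakly increasing slope sequences form an $\mathbb{F}$-basis of $\bigoplus_{d}G_T(\mathcal{C}(d))\otimes_{\mathbb{K}}\mathbb{F}$ by Negu\c{t}'s result (see~(\ref{basis:S})), and comparing this basis with the direct sum decomposition~(\ref{decompose:G}) induced by the semiorthogonal decomposition~(\ref{sod:C2}) of $D^b_T(\mathcal{C}(d))$ identifies each summand $K(\mathbb{T}_T(d_1)_{v_1}\boxtimes\cdots\boxtimes\mathbb{T}_T(d_k)_{v_k})\otimes_{\mathbb{K}}\mathbb{F}$ with the span of products of basis elements of the individual factors. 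In other words, the K\"unneth-type statement is a \emph{consequence} of Theorem~\ref{thm:intro2} together with the semiorthogonal decomposition of the ambient category $D^b_T(\mathcal{C}(d))$, and this derivation is the missing idea in your outline.

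A second, smaller gap is the graded/ungraded comparison: ``forgetting the grading preserves the localized rank'' is not a formal specialization argument. The paper sidesteps this by running the whole argument for $\mathcal{DT}^{\mathrm{gr}}_T(d)$, so that the Koszul duality equivalence~(\ref{equiv:NM}) applies at the level of categories, and then invoking the isomorphism $K_T(\mathcal{DT}^{\mathrm{gr}}(d))\cong K_T(\mathcal{DT}(d))$ of~(\ref{isom:DTgrade}), which is a cited result from~\cite{T4}. That this step has real content is shown by topological $K$-theory, where the analogous comparison fails: the ungraded category acquires an extra factor of $\Lambda=\mathbb{Q}[\epsilon]$ relative to the graded one, cf.~(\ref{Ktop:ung}). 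With these two inputs supplied, the rest of your argument is correct and coincides with the paper's.
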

	Indeed, we have the explicitly constructed basis of 
	$K_T(\mathcal{DT}(d)) \otimes_{\mathbb{K}} \mathbb{F}$
	whose cardinality is $p_3(d)$, 
	see~Corollary~\ref{thm:dim} for a more precise statement.

\subsection{Classical/motivic/cohomological/categorical DT theory}\label{subsec:intro:DTreview}
Here we review DT theory briefly, from its classical version to its categorical one. 

The classical DT invariant is a virtual count of sheaves 
on a Calabi-Yau 3-fold. 
When the sheaves considered are ideal sheaves of zero-dimensional 
subschemes, the DT invariant $\text{DT}_{X, d}$ for any $d\geq 0$ equals the virtual Euler characteristic (with respect to the Behrend function) of the Hilbert scheme $\text{Hilb}(X, d)$. 
The generating series of DT invariants satisfies 
\begin{equation}\label{thm:DTthreefold}
	\sum_{d \geq 0}\mathrm{DT}_{X, d}q^d=M(-q)^{\chi(X)},
\end{equation}
where $M(q)$ is the generating series of $3$D partitions, 
and $\chi(X)$ is the Euler characteristic of $X$, see \cite{BF}. 
The proof of \eqref{thm:DTthreefold} for general $X$ follows from the statement for $\mathbb{C}^3$, when \eqref{thm:DTthreefold} reduces to the equality 
\begin{equation}\label{equation:intro1}
	\text{DT}_d:=\text{DT}_{\mathbb{C}^3, d}=(-1)^dp_3(d).
\end{equation} 

There are several refinements of the classical DT theory which associate motives, vector spaces, or constructible sheaves to
a Calabi-Yau $3$-fold. 
One can also compute these refinements (for $\mathbb{C}^3$, and then for general $X$) for ideal 
sheaves of zero-dimensional subschemes, see \cite[Theorem 2.7]{BBS}, \cite[Lemma 4.1]{Dav}. 
For example, in cohomological DT theory, one associates a sheaf $\varphi_d$ on $\text{Hilb}(X, d)$ such that 
\begin{equation}\label{equation:intro2}
	\sum_{i\in\mathbb{Z}}(-1)^i \text{dim}_{\mathbb{Q}} H^i(\text{Hilb}(X, d), \varphi_d)=\text{DT}_{X, d}.
\end{equation}
For $X=\mathbb{C}^3$, the sheaf $\varphi_d$ is
 the vanishing cycle sheaf 
 $\varphi_{\Tr W}\mathbb{Q}_N[\dim N]$
 for $N=\mathrm{NHilb}(d)$
 associated to the function (\ref{intro:trace}). 

In \cite{T}, the second author started the study of DT categories of local surfaces, 
i.e. threefolds $X=\text{Tot}_S\left(\omega_S\right)$ for $S$ a smooth surface. 
The DT categories recover a $\mathbb{Z}/2\mathbb{Z}$-periodization of the cohomological DT invariants, and thus the classical DT invariants. The K-theory of these categories provides a version of K-theoretic DT invariants for local surfaces. 
The category (\ref{intro:DTcat}) is equivalent to the 
DT category constructed in~\cite{T, T4} for $S=\mathbb{C}^2$.

\subsection{Relation with wall-crossing formulae of DT invariants}
We regard
the semiorthogonal decompositions in Theorem~\ref{thm:intro1} as 
categorifications of wall-crossing formulae of DT invariants. 
The non-commutative Hilbert scheme (\ref{intro:NHilb}) is 
the moduli space of stable representations of 
the triple loop quiver with a framing
of dimension vector $(1, d)$. 
If we change a stability condition by crossing a wall, 
then the semistable locus is empty unless $d=0$, and 
the difference of the corresponding DT invariants is described 
in terms of Joyce-Song generalized DT invariants~\cite{JS} for representations of the unframed 
triple loop quiver. 
The corresponding 
wall-crossing formula is given by
\begin{align}\label{intro:wcf}
	\sum_{d \geq 0} \mathrm{DT}_{d}q^d=
	\exp\left(\sum_{n\geq 1} (-1)^{n-1} n N_n q^n   \right)
	=\prod_{d\geq 1}(1-(-q)^d)^{d \Omega_d},
	\end{align}
	see~\cite[Section~6.3]{JS}, \cite[Remark~5.14]{T5}.
The invariant $N_{n} \in \mathbb{Q}$ is a generalized DT invariant 
counting zero-dimensional sheaves on $\mathbb{C}^3$ with length $n$, 
and the invariant $\Omega_{d} \in \mathbb{Z}$ (called \textit{BPS invariant})
is defined by the multiple cover formula:
\begin{align*}
	N_n=\sum_{k\geq 1, k|n}\frac{1}{k^2}\Omega_{n/k}. 
	\end{align*}
In fact, one computes that $\Omega_d=-1$ for all $d\geq 1$, so one obtains (\ref{thm:DTthreefold}). 
Davison--Meinhardt~\cite{DM} constructed a perverse sheaf on $\mathrm{Sym}^d(\mathbb{C}^3)$, 
called the \textit{BPS sheaf}, 
whose Euler characteristic recovers the BPS invariant. 

We may view $\mathbb{S}(d)_w$ as a categorification of $\Omega_d$
and regard the semiorthogonal decomposition in Theorem~\ref{thm:intro1} as 
a categorification of the formula (\ref{intro:wcf}).
However, this is not precise, as the number of generators of 
$\mathbb{S}(d)_w$ is the number of partitions of $\gcd(d, w)$
(at least in torus localized K-theory), 
which is bigger than $\lvert \Omega_d \rvert=1$
if $\gcd(d, w)>1$. When $\gcd(d, w)=1$, the category $\mathbb{S}(d)_w$ should be regarded 
as a categorification of $\Omega_d$ as we explain in the follow-up paper \cite{PT1}. 
When $\gcd(d, w)>1$, we expect that the primitive part
of $K(\mathbb{S}(d)_w)$ with respect to its coproduct structure 
has rank one, and 
gives a K-theoretic version of $\Omega_d$. 
We pursue this problem in \cite{PT1}.

The categorification problem of wall-crossing formulae of DT invariants 
has been pursued by the second author in several situations~\cite{T, T3, TodDK}. 
We remark that the combinatorics of torus weights required in this paper is much harder
than that involved in the mentioned previous works, and the argument from the proof of the PBW theorem for Hall algebras 
by the first author~\cite{P} is essential to overcome this difficulty.

\subsection{Categorical and K-theoretic study of Hilbert schemes of points}\label{subsec:catHilb}
We now review the categorical study of Hilbert schemes of points 
$\mathrm{Hilb}(X, d)$ when $\dim X \leq 2$, and of non-commutative Hilbert 
schemes. 

When $X$ is a smooth projective curve of genus $g$, $\mathrm{Hilb}(X, d)$ is the symmetric product
$\mathrm{Sym}^d(X)$ of $X$. 
Although $\mathrm{Sym}^d(X)$ is a classical 
variety, the structure of its derived category of coherent 
sheaves has been investigated 
rather recently. In~\cite{TodDK}, the second author 
constructed semiorthogonal decompositions
of the derived category of $\mathrm{Sym}^d(X)$
for $d>g-1$
based on the wall-crossing formula of Pandharipande-Thomas invariants, see also~\cite{Jiangproj} for a different proof.  
For $d\leq g-1$, the semiorthogonal indecomposability
of 
$D^b(\mathrm{Sym}^d(X))$ is studied in~\cite{BTK}. 
 
When $X$ is a K3 surface, 
$\Hilb(X, d)$ is a holomorphic symplectic manifold. 
The main result of Halpern-Leistner~\cite{HalpK32} implies the
D/K equivalence conjecture by Bondal--Orlov~\cite{B-O2}, Kawamata~\cite{Ka1} for $\Hilb(X, d)$, 
i.e. any holomorphic symplectic manifold $M$ 
which admits a birational map $M \dashrightarrow \Hilb(X, d)$
has $D^b(M)$ equivalent to $D^b(\Hilb(X, d))$. 
For an arbitrary smooth projective surface $X$
and its blow-up $\widehat{X} \to X$, 
Koseki~\cite{Kosekiup} proved a categorical blow-up formula for Hilbert 
schemes of points, using the second author's proof~\cite{Toquot} of Jiang's 
conjecture~\cite{JiangQuot} on derived categories of some Quot schemes. 

For $X=\mathbb{C}^2$, 
the celebrated derived McKay correspondence by 
Bridgeland--King--Reid \cite{BKR} and Haiman \cite{Ha} says that there is a derived equivalence 
\begin{align}\label{intro:McKay}
	D^b(\mathrm{Hilb}(\mathbb{C}^2, d)) \cong D^b((\mathbb{C}^2)^{\times d}/\mathfrak{S}_d).
	\end{align}
	In \cite{SV}, Schiffmann--Vasserot constructed an action of the elliptic Hall algebra on the equivariant K-theory of $\mathrm{Hilb}(\mathbb{C}^2, d)$, which extends the action of the Heisenberg algebra, due to Nakajima and Grojnowski, on the cohomology of $\mathrm{Hilb}(\mathbb{C}^2, d)$.
	The categorical Heisenberg actions on derived categories of 
	$\mathrm{Hilb}(\mathbb{C}^2, d)$ were studied in~\cite{CauLi, Krug}
	using the equivalence (\ref{intro:McKay}), also see~\cite{Zhao2}.

Recently, Lunts--{\v S}penko--{V}an den Bergh~\cite{LSV} studied derived categories of coherent sheaves on non-commutative Hilbert schemes (defined from framed quivers with 
arbitrary number of loops), 
and constructed semiorthogonal decompositions of these categories, 
see~\cite[Proposition~1.9]{LSV}.
Their semiorthogonal decomposition for the framed triple loop quiver 
corresponds to the semiorthogonal decomposition in (\ref{sod:E}). 
In the proof of Theorem~\ref{thm:intro1}, we 
give its refinement, see (\ref{SODnhilb}). 


\subsection{A K-theoretic Bridgeland--King--Reid--Haiman isomorphism in dimension three}\label{subsection8}
We propose investigating analogous statements to (\ref{intro:McKay})
in dimension three. The Hilbert scheme of points on $\mathbb{C}^3$ is very singular, and we replace it with the summands $\mathbb{S}(d)_w$ of the category $\mathcal{DT}(d)$ in our pursuit for a McKay correspondence 
with respect to the permutation action of $\mathfrak{S}_d$ on $(\mathbb{C}^{3})^{\times d}$.

	Let $(d, v)\in \mathbb{N}\times\mathbb{Z}$ be
	coprime numbers and take an integer $n\geq 1$. 
	The result of Theorem~\ref{thm:intro2}
	suggests a similarity between (the K-theory of) the categories $\mathbb{S}(nd)_{nv}$ and $\mathrm{MF}\left(\left(\mathbb{C}^3\right)^{\times n}/\mathfrak{S}_n, 0\right)$, and between their graded and/or $T$-equivariant versions. 
	Considering the graded $T$-equivariant situation, 
	we expect an isomorphism of $\mathbb{K}$-modules
	\begin{align}\label{isom:conj}
	K_T\left((\mathbb{C}^3)^{\times n}/\mathfrak{S}_n\right)\xrightarrow{\sim} K_T\left(\mathbb{T}(nd)_{nv}\right), 
	\end{align}
	or, equivalently, that $K_T\left(\mathbb{T}(nd)_{nv}\right)$ is a free $\mathbb{K}$-module 
	of rank $p_2(n)$. 
We regard Theorem \ref{thm:intro2} as evidence towards (\ref{isom:conj}), 
i.e. it is true after 
taking $(-)\otimes_{\mathbb{K}}\mathbb{F}$. 
We provide further evidence towards the expectation (\ref{isom:conj}) in \cite{PT1}. 
In loc.~cit.~, we define a $\mathbb{K}$-bialgebra structure on \[\mathcal{A}:=\bigoplus_{n\geq 0}K_T(\mathbb{T}(nd)_{nv})\] such that $\mathcal{A}\otimes_{\mathbb{K}}\mathbb{F}$ is the MacDonald symmetric algebra over $\mathbb{F}$. We further show in loc.~cit.~that
$K_T\left(\mathbb{T}(d)_v\right)$ modulo $\mathbb{K}\text{-torsion}$ is a free $\mathbb{K}$-module with generator $\mathcal{E}_{d, v}$, in particular the isomorphism~\eqref{isom:conj} holds for $n=1$ up to 
$\mathbb{K}$-torsion. 

It is natural to formulate analogues of (\ref{isom:conj}) for non-equivariant K-theory or even for categories, but we do not have much evidence towards them.

\subsection{Further directions}
In \cite{PT1}, we continue the study of the categories $\mathbb{S}(d)_w$. Besides the results referenced in Subsection~\ref{subsection8}, we study the support of objects in $\mathbb{S}(d)_w$. Davison~\cite[Lemma~4.1]{Dav} proved that the BPS sheaf on $\mathbb{C}^3$ is supported over the small diagonal $\mathbb{C}^3\hookrightarrow \text{Sym}^d\left(\mathbb{C}^3\right)$. In loc.~cit. we prove categorical and K-theoretic analogues of Davison's support lemma.

The computation \eqref{thm:DTthreefold} is not only a starting point in computations of DT invariants, but it also appears in the statement of the Maulik--Okounkov--Nekrasov--Pandharipande conjecture \cite{MNOP} and in the 
DT/ PT correspondence \cite{Br}, \cite{T5}.
In \cite{PT2}, we prove a version of Theorem \ref{thm:intro1} for threefolds $X=\text{Tot}_S(\omega_S)$, where $S$ is a smooth surface. We then use it to prove categorical versions of the DT/PT correspondence for local surfaces.

\subsection{Structure of the paper}
In Section \ref{Section:preliminaries}, we list some of the main notations used in the rest of the paper, especially related to weight spaces of $T(d)\subset GL(d)$, and we review and provide references for some of the constructions used, such as matrix factorizations. 
In Section \ref{section:MacMahon}, we define the quasi-BPS categories $\mathbb{S}(d)_w$ in Subsection \ref{ss2} and
prove Theorem \ref{thm:intro1}. 
In Section \ref{Section:KDT}, we introduce the objects $\mathcal{E}_{d, v}$ of $\mathbb{T}(d)_v$ in 
Definition~\ref{definition:Edw} and prove Theorem \ref{thm:intro2}.

\subsection{Acknowledgements}
We thank Tasuki Kinjo, Davesh Maulik, Andrei Negu\c{t}, 
Raphaël Rouquier, Olivier Schiffmann, and Eric Vasserot for discussions related to this work. We thank {\v S}pela {\v S}penko for telling us about her joint work \cite{LSV} and for comments on a previous version of the article. We thank the referees for numerous useful suggestions.
	Y.~T.~is supported by World Premier International Research Center
	Initiative (WPI initiative), MEXT, Japan, and Grant-in Aid for Scientific
	Research grant (No.~19H01779) from MEXT, Japan.

\section{Preliminaries}\label{Section:preliminaries}

\subsection{Notations}
The spaces considered in this paper are defined 
over the complex field $\mathbb{C}$ and they are quotient stacks $\X=A/G$, where $A$ is a dg scheme, the derived zero locus of a section $s$ of a finite rank bundle vector bundle $\mathcal{E}$ on a finite type separated scheme $X$ over $\mathbb{C}$, and $G$ is a reductive group. For such a dg scheme $A$, let $\dim A:=\dim X-\text{rank}(\mathcal{E})$,
let $A^{\mathrm{cl}}:=Z(s)\subset X$ be the (classical) zero locus, and let $\X^{\mathrm{cl}}:=A^{\mathrm{cl}}/G$. 

For a (derived) stack $\mathcal{X}$ with an action of a torus $T$, we denote by $D^b_T(\mathcal{X})$
the bounded derived category of $T$-equivariant coherent sheaves on $\mathcal{X}$ and by $G_T(\X)$ its Grothendieck group. We have a natural isomorphism $G_T(\X)\cong G_T(\X^{\text{cl}})$ \cite[Equation (2.4)]{VaVa}. When $T$ is trivial, we drop it from the notation.



\subsection{Table}
We list the most frequent notations used in the paper:

\begin{figure}
	\centering
\scalebox{0.7}{
	\begin{tabular}{|l|l|l|}
		\hline
	Notation & Description & Location defined \\\hline
$Q$ & three loop quiver & Subsection \ref{sss1}
\\ \hline
$Q^f$ & framed three loop quiver &
Subsection \ref{sss1}
\\ \hline
$W$ and $\mathrm{Tr}\,W$ & potential and induced regular function & Equation \eqref{def:Wd}
\\ \hline
$\X(d)$ & stack of representations of $Q$ &
Subsection \ref{sss1}
\\ \hline
$\X^f(d)$ & stack of representations of $Q^f$ &
Subsection \ref{sss1}
\\ \hline
$\mathrm{NHilb}(d)$ & noncommutative Hilbert scheme & 
Subsection \ref{subsec:catC3}
\\ \hline
$\mathcal{Y}(d)$ & stack of representations of the two loop quiver &
Subsection \ref{subsection:Hallcomm}
\\ \hline
$\mathcal{C}(d)$ & commuting stack & Subsection \ref{subsection:Hallcomm}
\\ \hline
$M(d)$, $M$, $M(d)_\mathbb{R}$, $M_\mathbb{R}$ & weight spaces & Subsection \ref{sss2}
\\ \hline
$\mathcal{W}$ and $\mathcal{W}^f$ & sets of weights associated to $\X(d)$ and $\X^f(d)$ &
Subsection \ref{def:setW}
\\ \hline
$\rho$ & half the sum of positive roots &
Subsection \ref{def:setW}
\\ \hline
$\tau_d$ & Weyl-invariant weight with sum of coefficients $1$ & Subsection \ref{def:setW}
\\ \hline
$1_d$ & diagonal cocharacter &
Subsection \ref{def:setW}
\\ \hline
$\mathcal{T}$ & tree used to define paths of partitions &
Subsection \ref{tree}
\\ \hline
$\mu\succeq \lambda$  & comparison of cocharacters &
Subsection \ref{comparisoncocharacters}
\\ \hline
$\ee\geq\dd$ & comparison of partitions &
Subsection \ref{compa}
\\ \hline
$\textbf{W}(d)$ and $\textbf{W}(d)_w$ & polytopes used to define quasi-BPS categories
& Equations \ref{W} and \ref{W0}
\\ \hline
$\textbf{V}(d)$ & polytope used to study $\mathrm{NHilb}(d)$ & Equation \ref{V}
\\ \hline
$F_r(\lambda)$ and $F_r(\lambda)^{\mathrm{int}}$ & Face (and its interior) of the polytope $\textbf{W}(d)$ & Subsection \ref{ss1}
\\ \hline
$\mathrm{MF}$, $\mathrm{MF}^{\mathrm{gr}}$ & Categories of matrix factorizations &
Subsection \ref{MF}
\\ \hline
$D_\mathrm{sg}$, $D_\mathrm{sg}^{\mathrm{gr}}$ & Categories of singularities &
Subsection \ref{MF}
\\ \hline
$T$ & two-dimensional torus &
Equation \eqref{torus:T}
\\ \hline
$\mathbb{K}$ & Grothendieck ring of $BT$ & Subsection \ref{subsection:shuffle}
\\ \hline
$\mathbb{F}$ & Fraction field of $\mathbb{K}$ &
Subsection \ref{subsection:generators}
\\ \hline
$\mathcal{DT}^\bullet_*(d)$ & DT category (graded or not, equivariant or not) &
Definition \ref{def:catDT2}
\\ \hline
$\mathbb{M}(d)$ and $\mathbb{M}(d)_w$ & Magic categories for $\X(d)$
&Subsection \ref{ss:Ddelta}
\\ \hline
$\mathbb{D}(d; \delta)$ and $\mathbb{F}(d;\delta)$ & Magic categories used to study $\mathrm{NHilb}(d)$
&Subsection \ref{ss:Ddelta}
\\ \hline
$\mathbb{S}^\bullet_*(d)_w$ & quasi-BPS categories
& Subsection \ref{gradingMF}
\\ \hline
$\mathbb{T}(d)_w$ and $\widetilde{\mathbb{T}}(d)_w$ & quasi-BPS categories for the commuting stack
& Subsection \ref{subsection:Tdv}
\\ \hline
$(v_i)_{i=1}^k$ & integer weights corresponding to a partition $(d_i)_{i=1}^k$ &
Equation \eqref{w:prime}
\\ \hline
$S^d_w$ & set of partitions
&Subsection \ref{dectree2}
\\ \hline
$T^d_w$ & set of partitions
&Subsection \ref{subsec333}
\\ \hline
$\mathcal{E}_{d,v}$ & complex in $\mathbb{T}(d)_v$ 
&Definition \ref{definition:Edw}
\\ \hline
$\mathcal{F}_{d,w}$ & complex in $\mathbb{S}^\bullet(d)_w$
& Definition \ref{def:objF}
\\ \hline
$\Phi$ & the Koszul equivalence
& Equation \eqref{equiv:Phi}
\\ \hline
	\end{tabular}
}
	\vspace{.5cm}
	\caption{Notation introduced in the paper}
	\label{table:notation}
\end{figure}

\subsection{Weights and partitions} 

\subsubsection{}\label{sss1}
Let $Q=(I, E)$ be the quiver with vertex set $I=\{1\}$ and edge set $E=\{x, y, z\}$. Let $Q^f$ be the quiver with vertex set $I^f=\{0, 1\}$ and edge set $E^f=e\sqcup E$, where $e$ is an edge from $0$ to $1$.

For $d\in \mathbb{N}$, let $V$ be a $\mathbb{C}$-vector space of dimension $d$. 
We often write $GL(d)$ as $GL(V)$. 
Its Lie algebra is 
denoted by $\mathfrak{gl}(d)=\mathfrak{gl}(V):=\text{End}(V)$. 
When the dimension is clear from the context, we drop $d$ from its notation
and write it as $\mathfrak{g}$. 
Denote by 
\begin{align}\label{def:Xd}
\X(d)&:=R(d)/GL(d)=\mathfrak{gl}(V)^{\oplus 3}/GL(V), \\
\notag \X^f(d)&:=R^f(d)/GL(d)=\left(V\oplus \mathfrak{gl}(V)^{\oplus 3}\right)/GL(V)
\end{align}
 the moduli stacks of representations of $Q$ of dimension $d$, and of
 representations of $Q^f$ of dimension vector $(1, d)$, 
 respectively. 
 
 \subsubsection{}\label{sss2}
 Fix 
 $T(d)\subset GL(d)$ the maximal torus consisting of diagonal matrices. 
Denote by $M(d)$ the weight space of $T(d)$ and let $M(d)_{\mathbb{R}}:=M(d)\otimes_{\mathbb{Z}}\mathbb{R}$. Let $\beta_1,\ldots, \beta_d$ be the weights of the standard representation of $GL(d)$. 
A weight $\chi=\sum_{i=1}^d c_i\beta_i$ is dominant 
(resp. strictly dominant) 
if 
\begin{align*}
c_1\leq\cdots\leq c_d, \ 
(\mbox{resp.~}
c_1<\cdots<c_d).
\end{align*}
We denote by $M^+\subset M$ and $M^+_{\mathbb{R}}\subset M_{\mathbb{R}}$ the dominant chambers. When we want to emphasize the dimension vector, we write $M(d)$ etc. Denote by $N$ the coweight lattice of $T(d)$ and by $N_{\mathbb{R}}:=N\otimes_{\mathbb{Z}}\mathbb{R}$. Let $\langle\,,\,\rangle$ be the natural pairing between $N_{\mathbb{R}}$ and $M_{\mathbb{R}}$. 

Let $\mathfrak{S}_d$ be the Weyl group of $GL(d)$. For $\chi\in M(d)^+$, let $\Gamma_{GL(d)}(\chi)$ be the irreducible 
representation of $GL(d)$ of highest weight $\chi$. We drop $GL(d)$ from the notation if the dimension vector $d$ is clear from the context. 

\subsubsection{}\label{def:setW}
Denote by $\mathcal{W}$ the multiset of $T(d)$-weights of $R(d)$ and by $\mathcal{W}^f$ the multiset of $T(d)$-weights of $R^f(d)$.
Namely we have 
\begin{align*}
\mathcal{W}=\{(\beta_i-\beta_j)^{\times 3} \mid 1\leq i, j \leq d\}, \
\mathcal{W}^f =\mathcal{W} \cup \{\beta_i \mid 1\leq i\leq d\}. 
\end{align*}
For $\lambda$ a cocharacter of $T(d)$
and a $T(d)$-representation $N$, we denote by $N^{\lambda>0}$ the sum of 
weights $\beta$ in $N$ such that $\langle \lambda, \beta\rangle>0$.

We denote by 
$\rho$ half the sum of positive roots of $GL(d)$. 
In our convention of the dominant chamber, 
it is given by 
\begin{align*}
    \rho=\frac{1}{2}\mathfrak{g}^{\lambda<0}=\frac{1}{2}\sum_{j<i}(\beta_i-\beta_j),
\end{align*}
where $\lambda$ is the antidominant cocharacter $\lambda(t)=(t^d, t^{d-1}, \ldots, t)$. 
We denote by $1_d:=z\cdot\text{Id}$ the diagonal cocharacter of $T(d)$. 
Define the real weight
\begin{align}\label{def:taud}
\tau_d:=\frac{1}{d}\sum_{j=1}^d\beta_j
\in M_{\mathbb{R}}. 
\end{align}


\subsubsection{}\label{ss13} Let $G=GL(d)$, let $X$ be a $G$-representation, and let
$\X=X/G$ be the corresponding quotient stack. Let $\mathcal{V}$ be the multiset of $T(d)$-weights of $X$.
For a cocharacter $\lambda$ of $T(d)$,
let $X^\lambda\subset X$ be the subspace generated by weights $\beta\in \mathcal{V}$ such that $\langle \lambda, \beta\rangle=0$, let $X^{\lambda\geq 0}\subset X$ be the subspace generated by weights $\beta\in \mathcal{V}$ such that $\langle \lambda, \beta\rangle\geq 0$, and let $G^\lambda$ and $G^{\lambda\geq 0}$ be the Levi and parabolic groups associated to $\lambda$.
Consider the fixed and attracting stacks
\begin{align}\notag
    \X^\lambda:=X^\lambda/ G^\lambda,\
    \X^{\lambda\geq 0}:=X^{\lambda\geq 0}/G^{\lambda\geq 0}
\end{align}
with maps
\begin{align}\label{e}
 \X^\lambda\xleftarrow{q_\lambda}\X^{\lambda\geq 0}\xrightarrow{p_\lambda}\X.
 \end{align}
 
We abuse notation and denote by $\X^{\lambda\geq 0}$ the class 
\begin{align*}
\det(X^{\lambda \geq 0}) \otimes \det(\mathfrak{g}^{\lambda>0})^{\vee}
\in \mathrm{Pic}(BT(d))=M
\end{align*}	
and by $\langle \lambda, \X^{\lambda\geq 0}\rangle$ the corresponding integer. We use the similar notation for $\X^{\lambda>0}$, $\X^{\lambda\leq 0}$, etc.

\subsubsection{}\label{paco} 
Let $d\in \mathbb{N}$ and recall the definition of $\X(d)$ from (\ref{def:Xd}).
For a cocharacter $\lambda \colon \C^*\to T(d)$, consider the maps of fixed and attracting loci
(\ref{e}). 
We say that two cocharacters $\lambda$ and $\lambda'$ are equivalent and write $\lambda\sim\lambda'$ if $\lambda$ and $\lambda'$ have the same fixed and attracting stacks as above. 
 We call $\dd:=(d_i)_{i=1}^k$ a partition of $d$ if $d_i\in\mathbb{N}$ are all non-zero and $\sum_{i=1}^k d_i=d$. We similarly define partitions of $(d,w)\in\mathbb{N}\times\mathbb{Z}$.
For a cocharacter $\lambda$ of $T(d)$, there is an associated partition $(d_i)_{i=1}^k$ such that
$\X(d)^\lambda\cong\times_{i=1}^k\X(d_i)$. 
Equivalence classes of antidominant cocharacters are in bijection with ordered partitions $(d_i)_{i=1}^k$ of $d$.
For an ordered partition $\dd=(d_i)_{i=1}^k$ of $d$, fix a corresponding antidominant cocharacter $\lambda=\lambda_{\dd}$ of $T(d)$ which induces the maps
\[\X(d)^\lambda\cong\times_{i=1}^k\X(d_i)
\xleftarrow{q_\lambda}\X(d)^{\lambda\geq 0}
\xrightarrow{p_\lambda}\X.\] 
We also use the notations $p_\lambda=p_{\dd}$, $q_\lambda=q_{\dd}$.
The stack $\mathcal{X}(d)^{\lambda \ge 0}$ is isomorphic to the 
moduli stack of filtrations of $Q$-representations 
\begin{align*}
    0=R_0 \subset R_1 \subset \cdots \subset R_k
\end{align*}
such that $R_i/R_{i-1}$ has dimension $d_i$. 
The morphism $q_{\lambda}$ sends the above filtration to its 
associated graded, and $p_{\lambda}$ sends it to $R_k$. 
The categorical Hall algebra is given 
by the functor 
$p_{\lambda*}q_\lambda^*=p_{\dd*}q_{\dd}^*$ and denoted by
\begin{align}\label{prel:hall}
    \ast 
     \colon D^b(\mathcal{X}(d_1)) \boxtimes
     \cdots \boxtimes D^b(\mathcal{X}(d_k))
     \to D^b(\mathcal{X}(d)). 
\end{align}
We may drop the subscript $\lambda$ or $\dd$ in the functors $p_*$ and $q^*$ when the cocharacter $\lambda$ or the partition $\dd$ is clear.

\subsubsection{}\label{id} Let $(d_i)_{i=1}^k$ be a partition of $d$. There is an identification \[\bigoplus_{i=1}^k M(d_i)\cong M(d),\] where 
$\beta_1, \ldots, \beta_{d_1}$ correspond
to the 
the weights
of standard representation of $GL(d_1)$ in $M(d_1)$, etc.

\subsubsection{}\label{comparisoncocharacters}\label{subsubsub:order}
For cocharacters $\lambda,\mu \colon \C^*\to SL(d)\cap T(d)$, 
we write $\mu\succeq \lambda$ 
if for every weight $\beta \in \mathcal{W}$ with $\langle \lambda, \beta \rangle> 0$, we have that $\langle \mu, \beta\rangle > 0$. If both $\mu\succeq \lambda$ and $\lambda\succeq \mu$, write $\mu\sim\lambda$.

\subsubsection{}\label{compa}
Let $\underline{e}=(e_i)_{i=1}^l$ and $\dd=(d_i)_{i=1}^k$ be two partitions of $d\in \mathbb{N}$. We write $\ee\geq\dd$
if there exist integers \[a_0=0< a_1<\cdots<a_{k-1}\leq a_k=l\] such that for any $0\leq j\leq k-1$, we have
\[\sum_{i=a_{j}+1}^{a_{j+1}} e_i=d_{j+1}.\]
There is a similarly defined order on pairs $(d,w)\in\mathbb{N}\times\mathbb{Z}$.


\subsubsection{}\label{tree}
We define 
$\mathcal{T}$ to be the unique (oriented) tree such that: 
\begin{enumerate}
\item each vertex is indexed by a partition $(d_1, \ldots, d_k)$ of 
	some $d \in \mathbb{N}$, 
 \item for each $d \in \mathbb{N}$, there is a unique vertex 
	indexed by the partition $(d)$ of size one, 
  \item if $\bullet$ is a vertex indexed by $(d_1, \ldots, d_k)$ 
	and $d_m=(e_1, \ldots, e_s)$ is a partition of $d_m$ for some $1\leq m \leq k$, then there is a unique vertex $\bullet'$ indexed by 
	$(d_1, \ldots, d_{m-1}, e_1, \ldots, e_s, d_{m+1}, \ldots, d_k)$ and
	with an edge from $\bullet$ to $\bullet'$, and 
 \item all edges in $\mathcal{T}$ are as in (3).
	\end{enumerate}
Note that  
 each partition $(d_1, \ldots, d_k)$ of 
	some $d \in \mathbb{N}$ 
 gives an index of some (not necessary unique) vertex.  
A subtree $T \subset \mathcal{T}$ is called a \textit{path of partitions} 
if it is connected, contains a vertex indexed by $(d)$ for some 
$d \in \mathbb{N}$ and 
a unique end vertex $\bullet$. 
The partition $(d_1, \ldots, d_k)$ at the end vertex $\bullet$
is called the associated partition of $T$. 
We define the Levi group associated to $T$ to be 
\begin{align*}
	L(T):=\times_{i=1}^k GL(d_i). 
	\end{align*}
 Note that each edge $\ell\in \mathcal{T}$ corresponds to a partition of some natural number: if $\ell$ connects $(d_1, \ldots, d_k)$ and $(d_1, \ldots, d_{m-1}, e_1, \ldots, e_s, d_{m+1}, \ldots, d_k)$ as in (3), then $\ell$ corresponds to the partition $(e_1,\ldots, e_s)$ of $d_m$.


The tree $\mathcal{T}$ decomposes into the disjoint
union of $\mathcal{T}_d$ for $d \in \mathbb{N}$, 
where $\mathcal{T}_d$
contains the vertex indexed by $(d)$. 
The picture of $\mathcal{T}_3$ is depicted below. 

\begin{align*}
\xymatrix{
& (3) \ar[ld] \ar[dd] \ar[rd]  & \\
(2, 1) \ar[d] & & (1, 2) \ar[d] \\
(1, 1, 1) & (1, 1, 1) & (1, 1, 1)
}
\end{align*}


\subsection{Polytopes}\label{ss1}
We consider the following polytopes in $M(d)_{\mathbb{R}}$. 
The polytope $\textbf{W}(d)$ is defined as
\begin{equation}\label{W}
    \textbf{W}(d):=\frac{3}{2}\text{sum}[0, \beta_i-\beta_j]+\mathbb{R}\tau_d\subset M(d)_{\mathbb{R}},
    \end{equation}
where the Minkowski sum is after all $1\leq i, j\leq d$ and where $\tau_d$ is given by (\ref{def:taud}).  Consider the hyperplane
\begin{equation}\label{W0}
    \textbf{W}(d)_w:=\frac{3}{2}\text{sum}[0, \beta_i-\beta_j]+w\tau_d\subset \textbf{W}(d).
    \end{equation}
    For $r>0$ and $\lambda$ an antidominant cocharacter of $SL(d) \cap T(d)$, let $F_r(\lambda)$ be the face of the polytope $2r\textbf{W}(d)$ corresponding to the cocharacter $\lambda$, so the set of weights $\chi$ in $M(d)_\mathbb{R}$ such that 
    \begin{align*}
        \chi&\in 2r \textbf{W}(d), \ 
        \langle \lambda, \chi\rangle=-r\langle \lambda, R(d)^{\lambda>0}\rangle.
    \end{align*}
    Note that the second condition implies that 
    $\chi \in 2r \partial \textbf{W}(d)$. 
    For $\lambda$ a cocharacter, let $\Lambda$ be the set of cocharacters $\mu$ such that $R(d)^\mu\subsetneq R(d)^\lambda$.
    Denote by \[F_r(\lambda)^{\text{int}}:=F_r(\lambda)\setminus\bigcup_{\mu\in\Lambda}F_r(\mu).\]
    When $r=\frac{1}{2}$, we use the notations $F(\lambda)$ and $F(\lambda)^{\text{int}}$.
    For $\chi\in M(d)_{\mathbb{R}}$, its $r$-invariant $r(\chi)$ is the smallest non-negative real number $r$ such that
    \[\chi\in 2r\textbf{W}(d).
    \] 
    Recall that $\mathcal{W}$ is the set of weights of $R(d)$ counted with multiplicity. 
If $\chi\in M_\mathbb{R}$ has $r(\chi)=r$, then there are coefficients $-r\leq c_{\beta}\leq 0$ for $\beta\in\mathcal{W}$ and $c\in\mathbb{R}$ such that
\begin{equation}\label{chibeta}
    \chi=\sum_{\beta\in\mathcal{W}} c_{\beta}\beta+c\tau_d.
    \end{equation}
The $p$-invariant for $\chi$ with $r(\chi)=r$ is defined as the smallest possible number of coefficients $c_\beta$ for $\beta\in\mathcal{W}$ equal to $-r$ in a formula \eqref{chibeta}.
    
The polytope $\textbf{V}(d)$ is defined as
\begin{equation}\label{V}
    \textbf{V}(d):=\frac{3}{2}\text{sum}[0, \beta_i-\beta_j]+\text{sum}[0, \beta_k]\subset M(d)_{\mathbb{R}},
\end{equation}
where the sum is after all $1\leq i, j, k\leq d$. 
\\

\subsection{A corollary of the Borel-Weil-Bott theorem}

We continue with the notations from Subsection \ref{ss13}. We assume that $X$ is a symmetric $G$-representation, meaning that for any weight $\beta$ of the maximal torus 
$T \subset G$, the weights $\beta$ and $-\beta$ appear with the same multiplicity in $X$. 
For a weight $\chi \in M$, 
let $\chi^+$ be the dominant Weyl-shifted conjugate of $\chi$ if it exists, and let $\chi^+=0$ otherwise.
Let $\mathcal{V}$ be the multiset of 
weights in $X$. 
For a subset $J\subset \mathcal{V}$, let
\[\sigma_J:=\sum_{\beta\in J}\beta.\]
For a weight $\chi \in M$, let 
$w$ be the element of the Weyl group such that $w*(\chi-\sigma_J)$ is dominant or zero. It has length $\ell(w)=:\ell(J)$. 
The following proposition is based on computations in \cite[Section 3.2]{hls} and \cite[Subsection 11.2]{SVdB}.

\begin{prop}\label{bbw}
Let $G$ be a reductive group, let $X$ be a symmetric $G$-representation, and
let $\lambda$ be a cocharacter of the maximal torus $T\subset G$. 
Recall the fixed and attracting stacks and the corresponding maps
\[X^\lambda/G^\lambda\xleftarrow{q_\lambda}X^{\lambda\geq 0}/G^{\lambda\geq 0}\xrightarrow{p_\lambda}X/G.\]
Let $\chi\in M$ be a dominant weight of $G^\lambda$. 
Then there is a quasi-isomorphism
\[\left(\bigoplus_{J}\mathcal{O}_{X}\otimes \Gamma_{G}\left((\chi-\sigma_J)^+\right)\left[|J|-\ell(J)\right], d\right)\xrightarrow{\sim}p_{\lambda*}q_{\lambda}^*\left(\mathcal{O}_{X^\lambda}\otimes\Gamma_{G^\lambda}(\chi)\right),\] where the complex on the left hand side has terms (shifted) vector bundles for all multisets $J\subset \{\beta\in\mathcal{V} \mid \langle \lambda, \beta\rangle<0\}$. 
\end{prop}

\begin{proof}
Factor the map $p_\lambda=\pi_\lambda i_\lambda$, where 
\[i_\lambda\colon X^{\lambda\geq 0}/G^{\lambda\geq 0}\hookrightarrow X/G^{\lambda\geq 0},\, \pi_\lambda\colon X/G^{\lambda\geq 0}\to X/G.\]
We thus need to find a quasi-isomorphism as in the statement of the proposition for $\pi_{\lambda*}i_{\lambda*}(\mathcal{O}_{X^{\lambda\geq 0}}\otimes \Gamma_{G^{\lambda\geq 0}}(\chi))$. The statement will follow from the Koszul resolution for the complex $i_{\lambda*}(\mathcal{O}_{X^{\lambda\geq 0}}\otimes \Gamma_{G^{\lambda\geq 0}}(\chi))$ and from the Borel-Weil-Bott theorem applied for each term of the Koszul resolution.

Let $\mathcal{N}:=\mathcal{O}_X\otimes (X^{\lambda<0})^\vee$ be a $G^{\lambda\geq 0}$-equivariant bundle on $X$. Consider the Koszul resolution 
$\left(\mathrm{Sym}^\bullet \left(\mathcal{N}[1]\right), d\right)\cong i_{\lambda *}\mathcal{O}_{X^{\lambda\geq 0}}$. 
Then \[i_{\lambda*}(\mathcal{O}_{X^{\lambda\geq 0}}\otimes \Gamma_{G^{\lambda\geq 0}}(\chi))\cong \left(\mathrm{Sym}^\bullet (\mathcal{N}[1])\otimes \Gamma_{G^{\lambda\geq 0}}(\chi), d\right).\]
Let $\pi^\circ\colon X/B\to X/G$ and $\tau_\lambda\colon X/B\to X/G^{\lambda\geq 0}.$ Then $\pi^\circ=\pi_\lambda \tau_\lambda$. By the projection formula and the Borel-Weil-Bott theorem for the group $G^\lambda$, there is an isomorphism \[\left(\mathrm{Sym}^\bullet (\mathcal{N}[1])\otimes \Gamma_{G^{\lambda\geq 0}}(\chi), d\right)\cong \tau_{\lambda *}\left(\tau_\lambda^*\mathrm{Sym}^\bullet(\mathcal{N}[1])\otimes \mathcal{O}(\chi), d\right).\]  
Write $\left(\tau^*_\lambda \mathrm{Sym}^\bullet(\mathcal{N}[1])\otimes\mathcal{O}(\chi), d\right)=\left( \bigoplus_{j\geq 0} A_j[j], d\right)$ for 
\[A_j:=\bigoplus_{|J|=j}\mathcal{O}_X(\chi-\sigma_J),\] where the right hand sum is after all subsets $J\subset \{\beta\in \mathcal{V}\mid \langle \lambda, \beta\rangle<0\}$ of cardinality $j$. 
Then $\pi_{\lambda*}i_{\lambda*}\left(\mathcal{O}_{X^{\lambda\geq 0}}\otimes \Gamma_{G^{\lambda\geq 0}}(\chi)\right)$ is the totalization of the bicomplex $\left(\bigoplus_{j\geq 0}\pi^\circ_*A_j[j], d\right)$. By the Borel-Weil-Bott theorem for the group $G$, the complex $\pi^\circ_*A_j$ is 
\[\pi^\circ_*A_j\cong \bigoplus_{|J|=j}\mathcal{O}_X\otimes \Gamma_{G}\left((\chi-\sigma_J)^+\right)[-\ell(J)],\] and the conclusion thus follows.
\end{proof}

We will mostly use the proposition above for the group $G=GL(d)$ and $X=R(d)$, in conjunction with a version of \cite[Proposition 3.6]{P}.
In the following, we denote by $(r, p)(\chi')<(r, p)(\chi')$
for weights $\chi$, $\chi'$ if 
$r(\chi')<r(\chi)$, or $r(\chi')=r(\chi)$ and $p(\chi')<p(\chi)$. 
Also recall the order of cocharacters in~\ref{subsubsub:order}. 

\begin{prop}\label{rgoesdown}
Let $\chi\in M(d)^+_{\mathbb{R}}$ with $r(\chi)=r>\frac{1}{2}$ and let $\lambda$ be the antidominant cocharacter of $T(d)$ with $\chi\in F_r(\lambda)^{\mathrm{int}}$.
Let
$I$ be a non-empty subset of  $\{\beta\in\mathcal{W} \mid \langle \lambda, \beta\rangle<0\}$.
Then $\chi-\sigma_I$ is in $2r\mathbf{W}(d)$. 
If $\chi-\sigma_I$ lies on $F_r(\mu)^{\mathrm{int}}$, then $\lambda\succ\mu$. In particular, we have that \[(r,p)(\chi-\sigma_I)<(r,p)(\chi).\]
\end{prop}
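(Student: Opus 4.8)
The plan is to work entirely in the weight lattice, exploiting the description \eqref{chibeta} of the points of $2r\mathbf{W}(d)$ together with the fact that every root $\beta_i-\beta_j$ occurs in $\mathcal{W}$ with multiplicity exactly $3$. Writing $\chi=\sum_{\beta\in\mathcal{W}}c_\beta\beta+c\tau_d$ with $c_\beta\in[-r,0]$, the hypothesis $\chi\in F_r(\lambda)^{\mathrm{int}}$ says that for every root direction $\beta$ with $\langle\lambda,\beta\rangle\neq 0$ the net coefficient in the $\beta$-direction is pinned to its $\lambda$-extreme value, of absolute size $3r$ (three relevant copies sitting at the endpoint $-r$), whereas the directions with $\langle\lambda,\beta\rangle=0$ are the free Levi directions and, since $\chi$ lies in the open face, their net coefficients lie strictly inside $(-3r,3r)$. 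Because only the net coefficient of each $\pm\beta$ pair is intrinsic, this is the quantity I will track throughout.

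First I would prove $\chi-\sigma_I\in 2r\mathbf{W}(d)$ by treating one root direction at a time. Fix a root $\beta$ occurring in $I$ with multiplicity $m\leq 3$; since $\langle\lambda,\beta\rangle<0$, its net coefficient is at the extreme value $3r$, and subtracting these $m$ copies moves it to $3r-m$. Redistributing the value $3r-m$ among the six copies of $\pm\beta$, while leaving every other direction untouched so that the total remains $\chi-\sigma_I$, is possible with all coefficients in $[-r,0]$ exactly when $|3r-m|\leq 3r$, i.e. when $m\leq 6r$; as $m\leq 3$ this is precisely guaranteed by $r>\tfrac12$. Carrying this out for each root met by $I$ exhibits $\chi-\sigma_I$ as a point of $2r\mathbf{W}(d)$, giving the first assertion and the bound $r(\chi-\sigma_I)\leq r$.

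For the remaining assertions I would introduce, for a boundary point $\psi$ with $r(\psi)=r$, the set $S(\psi)\subset\mathcal{W}$ of weights whose coefficient is forced to the extreme $-r$ in every representation \eqref{chibeta}; the face condition identifies $S(\psi)$ with the weights of one fixed sign of $\langle\nu,\cdot\rangle$ when $\psi\in F_r(\nu)^{\mathrm{int}}$, and the $p$-invariant of $\psi$ equals $|S(\psi)|$. It then suffices to show $S(\chi-\sigma_I)\subsetneq S(\chi)$. For the inclusion I would verify that no direction with $\langle\lambda,\beta\rangle=0$ can be newly forced (such a direction is untouched by $\sigma_I$ and stays in the open face), and that any root direction one of whose weights lies in $I$ has its net coefficient moved strictly inside $(-3r,3r)$ by the same $m\leq 6r$ computation, hence cannot be forced either; strictness is immediate, since $I$ is non-empty, so at least one partner weight leaves the extreme. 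Reading the inclusion $S(\chi-\sigma_I)\subsetneq S(\chi)$ through the definition of the order in Subsection~\ref{comparisoncocharacters} yields $\mu<\lambda$, while comparing cardinalities gives $p(\chi-\sigma_I)<p(\chi)$ in the boundary case $r(\chi-\sigma_I)=r$; combined with $r(\chi-\sigma_I)\leq r$ this is exactly $(r,p)(\chi-\sigma_I)<(r,p)(\chi)$.

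The main obstacle I anticipate is the bookkeeping forced by the multiplicities together with the linear dependence of the roots when $d\geq 3$, which would make a naive coordinate-by-coordinate argument invalid. I sidestep this by only ever redistributing coefficients inside a single $\pm\beta$ pair while preserving that pair's net contribution, so that the ambient equality with $\chi-\sigma_I$ is automatic and every feasibility question collapses to the one-variable inequality $m\leq 6r$. The one further point requiring care is the identity $p(\psi)=|S(\psi)|$ for $\psi$ in an open face, i.e. that the free Levi coefficients may simultaneously be kept off the extreme; I expect this to follow from the standard description of the relative interior of a zonotope, which is precisely what the definition of $F_r(\lambda)^{\mathrm{int}}$ encodes.
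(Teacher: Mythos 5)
The first thing to note is that the paper does not prove this proposition at all: it is imported as ``a version of'' \cite[Proposition 3.6]{P}, so there is no in-paper argument to compare against, and your attempt has to be judged on its own merits (the cited proof is of the same combinatorial type as what you propose). Your core mechanism is correct and is genuinely the heart of the matter: on the face, every admissible expression \eqref{chibeta} of $\chi$ has the three copies of each weight $\beta$ with $\langle\lambda,\beta\rangle>0$ pinned at the endpoint $-r$ and the copies with $\langle\lambda,\beta\rangle<0$ at $0$; subtracting the $m\leq 3$ copies of a root met by $I$ changes that root's net contribution from $3r$ to $3r-m$, and feasibility of redistributing inside the single pair $\pm\beta$ is exactly the inequality $m\leq 6r$, which is where both the multiplicity $3$ and the hypothesis $r>\frac{1}{2}$ enter. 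Note also that your choice of which coefficients are pinned is the only one under which the statement can hold and is the one matching \eqref{firststepdecomp}: with the opposite pinning (copies with $\langle\lambda,\beta\rangle<0$ at $-r$, which is what the displayed formula defining $F_r(\lambda)$ gives if read literally), subtracting $\sigma_I$ would push $\langle\lambda,\chi\rangle$ beyond its extreme value on $2r\mathbf{W}(d)$ and the first claim would fail.

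The one step that is wrong as literally stated is your description of $F_r(\lambda)^{\mathrm{int}}$. Because the roots $\beta_i-\beta_j$ are linearly dependent for $d\geq 3$, ``the net coefficient'' of a Levi direction is not intrinsic: it depends on the chosen expression \eqref{chibeta}. For instance, for $d=3$ the point $3r(\beta_2-\beta_1)$ admits expressions with that direction at net $3r$ and others with it at net $0$, and its $r$-invariant is in fact $r/2$. So ``interior $\Rightarrow$ Levi net coefficients strictly inside $(-3r,3r)$'' is not a well-posed per-direction statement. What your argument actually needs, and what is true, is existential: a point of $F_r(\lambda)^{\mathrm{int}}$ admits \emph{some} expression in which only the copies of weights with $\langle\lambda,\beta\rangle>0$ sit at $-r$, i.e. $\chi=-r(3\mathfrak{g}^{\lambda>0})+\psi$ with $r(\psi)<r$ inside the Levi polytope; this is precisely the decomposition \eqref{firststepdecomp}, quoted in the paper from \cite[Corollary 3.4]{P}, and your identity $p(\psi)=\lvert S(\psi)\rvert$ then follows from the convexity (averaging) argument you allude to. Granting these two facts, your proof closes, and in fact more directly than you wrote it: start from an expression of $\chi$ realizing $p(\chi)$, observe the pinned copies are at $-r$ in it, and modify only the pairs met by $I$. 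This single construction proves membership in $2r\mathbf{W}(d)$, removes at least three endpoint coefficients per root met by $I$ (so $(r,p)$ strictly drops), and shows that the forced set of $\chi-\sigma_I$, hence $\{\beta \mid \langle\mu,\beta\rangle>0\}$, is strictly contained in $\{\beta \mid \langle\lambda,\beta\rangle>0\}$, which is the assertion $\mu<\lambda$.
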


\subsection{Matrix factorizations and categories of singularities}\label{MF}

Reference for this subsection are \cite[Section 2.2]{T3}, \cite[Subsection~2.2]{T4}, \cite[Section~2.3]{BFK2}, \cite[Section~1]{PoVa3}.

\subsubsection{}\label{mf11} Let $G$ be a reductive group and let $Y$ be a smooth affine scheme with an action of $G$
and a trivial $\mathbb{Z}/2$-action.
Let 
$\mathcal{Y}=Y/G$ be the corresponding quotient stack
and let $f$ be
a regular function 
$f \colon \mathcal{Y}\to \mathbb{C}$. 
We define the dg-category of matrix factorizations $\mathrm{MF}(\mathcal{Y}, f)$. It has objects $(\mathbb{Z}/2\mathbb{Z})\times G$-equivariant factorizations $(P, d_P)$, where $P$ is a $G$-equivariant coherent sheaf on $Y$, 
$\langle 1\rangle$ is the twist corresponding to a non-trivial $\mathbb{Z}/2\mathbb{Z}$-character on $Y$, and 
$d_P \colon  P\to P\langle 1\rangle$ is a morphism satisfying $d_P\circ d_P=f$. Alternatively, the objects of $\mathrm{MF}(\mathcal{Y}, f)$ are tuples
\begin{align}\label{MF:obj}
\left(\alpha \colon E\rightleftarrows F \colon  \beta\right),
\end{align}
where $E$ and $F$ are $ G$-equivariant coherent sheaves
on $Y$, and $\alpha$ and $\beta$ are $G$-equivariant morphisms
such that $\alpha\circ\beta$ and $\beta\circ\alpha$ are multiplication by $f$.
Let $(P', d_{P'})$ be another 
matrix factorization which we also write as $\left(\alpha' \colon E'\rightleftarrows F' \colon  \beta'\right)$. The
category of tuples (\ref{MF:obj})
naturally forms a dg-category, denoted by
$\underline{\mathrm{MF}}(\mathcal{Y}, f)$. 
The dg-category $\mathrm{MF}(\mathcal{Y}, f)$
is defined to be the Drinfeld dg-quotient of 
$\underline{\mathrm{MF}}(\mathcal{Y}, f)$
by its subcategory of absolutely acyclic objects, 
see~\cite[Section~2.3]{BFK2}, \cite[Subsection~2.2]{T4}. It is weak equivalent to the dg-category consisting 
of objects (\ref{MF:obj}) such that $E, F$ are vector 
bundles,
see \cite[Definition 1.2]{PoVa3}.

The category of matrix factorizations is equivalent to the category of singularities of the derived zero locus $\mathcal{Y}_0$:
\[D_{\mathrm{sg}}(\mathcal{Y}_0):=D^b\mathrm{Coh}(\mathcal{Y}_0)/\mathrm{Perf}(\mathcal{Y}_0).\]
The equivalence \begin{equation}\label{MFSG}
    \mathrm{MF}(\mathcal{Y}, f)\stackrel{\sim}{\to} D_{\mathrm{sg}}(\mathcal{Y}_0)
\end{equation} is induced by the functor
$\left(\alpha \colon E\rightleftarrows F \colon  \beta\right)\mapsto \mathrm{Coker}(\alpha)$, see \cite[Theorem 3.10]{O}.

Consider a full dg-subcategory $\mathbb{M}\subset D^b(\mathcal{Y})$. Let $\mathbb{M}_0$ be the subcategory of $D^b\mathrm{Coh}(\mathcal{Y}_0)$ of complexes $F$ such that $i_*(F)\in \mathbb{M}$, where $i\colon \mathcal{Y}_0\to \mathcal{Y}$ is the natural map. Let $\mathbb{M}^p_0\subset \mathbb{M}$ be the full dg-subcategory of perfect complexes. Define 
\[D_{\mathrm{sg}}(\mathbb{M}):=\mathbb{M}_0/\mathbb{M}^p_0.\]
Further, we also define the subcategory of matrix factorizations
\[\mathrm{MF}(\mathbb{M}, f)\subset \mathrm{MF}(\mathcal{Y},f)\]
which is equivalent to $D_{\rm{sg}}(\mathbb{M})$ under 
the equivalence~\eqref{MFSG}. 

In this paper, we will consider $\mathbb{M}$ to be 
generated by some set of vector bundles $\{\mathcal{P}_i \mid i \in I\}$ on $\mathcal{Y}$. 
In this case, we can describe
$\mathrm{MF}(\mathcal{Y}, f)$
in terms of $\mathcal{P}_i$ as follows.
\begin{lemma}\label{lem:MF(M)}
    The subcategory $\mathrm{MF}(\mathbb{M}, f)$ consists of 
    matrix factorizations
    $\left(\alpha \colon E\rightleftarrows F \colon  \beta\right)$
    such that $E$, $F$ are direct sums of 
    vector bundles from $\{\mathcal{P}_i \mid i\in I\}$. 
\end{lemma}
\begin{proof}
    If $E$, $F$ are direct sums
    of vector bundles of $\mathcal{P}_i$, then 
    it corresponds to an object in $D_{\rm{sg}}(\mathbb{M})$
    under the equivalence (\ref{MFSG})
    by the definition of $D_{\rm{sg}}(\mathbb{M})$. 
    Conversely, we take an object $A \in D_{\rm{sg}}(\mathbb{M})$. Up to shift, it is represented by 
    $A \in \mathrm{Coh}(\mathcal{Y}_0)$ such that 
    $i_{\ast}A \in \mathbb{M}$, see~\cite[Lemma~1.10]{O}. 
Since $Y$ is affine, $G$ is reductive and $\mathcal{P}_i$ are vector 
bundles, there is a bounded complex $P^{\bullet}$ whose 
terms are direct sums from $\mathcal{P}_i$ such that 
$P^{\bullet}$ is quasi-isomorphic to $i_{\ast}A$. 
Then the resolution property of 
matrix factorizations in~\cite[Theorem~3.9]{MR3488035} shows that 
there is a matrix factorization
of the form 
$\left(\alpha \colon P^{\rm{odd}}\rightleftarrows P^{\rm{even}} \colon  \beta\right)$
and is isomorphic to $\left(0\rightleftarrows i_{\ast}A \right)$, hence corresponds to $A$
under the equivalence (\ref{MFSG}).
\end{proof}

\subsubsection{}\label{gradedMFdef}
Assume there exists an extra action of $\mathbb{C}^*$ on $Y$ which commutes with the action of $G$ on $Y$, 
and trivial on $\mathbb{Z}/2 \subset \mathbb{C}^{\ast}$.
Assume that $f$ is weight two with respect to 
the above $\mathbb{C}^{\ast}$-action. 
Denote by $(1)$ the twist by the character
$\mathrm{pr}_2 \colon G\times\mathbb{C}^*\to\mathbb{C}^*$. 
Consider the category of graded matrix factorizations $\mathrm{MF}^{\mathrm{gr}}(\mathcal{Y}, f)$. It has objects pairs $(P, d_P)$ with $P$ an equivariant $G\times\mathbb{C}^*$-sheaf on $Y$ and $d_P \colon P\to P(1)$ a $G\times\mathbb{C}^*$-equivariant morphism. 
Note that
as the $\mathbb{C}^{\ast}$-action is trivial on $\mathbb{Z}/2$, 
we have the induced action of
$\mathbb{C}^{\star}=\mathbb{C}^{\ast}/(\mathbb{Z}/2)$ on $Y$
and $f$ is weight one with respect to the above $\mathbb{C}^{\star}$-action. 
The objects of $\mathrm{MF}^{\mathrm{gr}}(\mathcal{Y}, f)$ can be alternatively described as tuples 
\begin{align}\label{tuplet:graded}
(E, F, \alpha \colon 
E\to F(1)', \beta \colon F\to E),
\end{align}
where $E$ and $F$ are $G\times\mathbb{C}^{\star}$-equivariant coherent sheaves
on $Y$, $(1)'$ is the twist by the character 
$G \times \mathbb{C}^{\star} \to \mathbb{C}^{\star}$, 
and $\alpha$ and $\beta$ are $\mathbb{C}^{\star}$-equivariant morphisms
such that $\alpha\circ\beta$ and $\beta\circ\alpha$ are multiplication by $f$.
The dg-category 
$\underline{\mathrm{MF}}^{\mathrm{gr}}(\mathcal{Y}, f)$
of tuples (\ref{tuplet:graded})
is defined similarly to 
$\underline{\mathrm{MF}}(\mathcal{Y}, f)$, 
and the dg-category 
$\mathrm{MF}^{\mathrm{gr}}(\mathcal{Y}, f)$
is defined to be its dg-quotient 
by the subcategory of absolutely acyclic objects. 

Functoriality of categories of (graded or ungraded) matrix factorizations for pullback and proper pushfoward is discussed in \cite{PoVa3}.

The category of graded matrix factorizations is equivalent to the category of singularities of $Y_0/G\times\mathbb{C}^*$:
\[D^{\mathrm{gr}}_{\mathrm{sg}}(\mathcal{Y}_0):=D^b\mathrm{Coh}_{\mathbb{C}^*}(\mathcal{Y}_0)/\mathrm{Perf}_{\mathbb{C}^*}(\mathcal{Y}_0).\]
The equivalence \begin{equation}\label{MFSG2}
    \mathrm{MF}^{\mathrm{gr}}(\mathcal{Y}, f)\stackrel{\sim}{\to} D^{\mathrm{gr}}_{\mathrm{sg}}(\mathcal{Y}_0)
\end{equation} is induced by the functor
$\left(\alpha \colon E\rightleftarrows F \colon  \beta\right)\mapsto \mathrm{Coker}(\alpha)$, see \cite[Theorem 3.9]{o2}.

Consider a full dg-subcategory $\mathbb{M}\subset D^b_{\mathbb{C}^*}(\mathcal{Y})$. 
Let $\mathbb{M}_0$ be the subcategory of $D^b\mathrm{Coh}_{\mathbb{C}^*}(\mathcal{Y}_0)$ of complexes $F$ such that $i_*(F)\in \mathbb{M}$, where $i\colon \mathcal{Y}_0\to \mathcal{Y}$ is the natural map. Let $\mathbb{M}^p_0\subset \mathbb{M}$ be the full dg-subcategory of perfect complexes. Define 
\[D^{\mathrm{gr}}_{\mathrm{sg}}(\mathbb{M}):=\mathbb{M}_0/\mathbb{M}^p_0.\]
Further, using the equivalence \eqref{MFSG}, we also define the subcategory of matrix factorizations
\[\mathrm{MF}^{\mathrm{gr}}(\mathbb{M}, f)\subset \mathrm{MF}^{\mathrm{gr}}(\mathcal{Y},f).\]
In the case that $\mathbb{M}$ is generated by $\mathbb{C}^{\ast}$-equivariant 
vector bundles $\{\mathcal{P}_i \mid i \in I\}$, 
we have the following graded version of Lemma \ref{lem:MF(M)}:

\begin{lemma}\label{lem:MF(M)gr}
    The subcategory $\mathrm{MF}^{\mathrm{gr}}(\mathbb{M}, f)$ consists of 
    matrix factorizations
    $\left(E, F, \alpha \colon E\to F(1)', \beta \colon F\to E\right)$
    such that $E$, $F$ are direct sums of 
    vector bundles from $\{\mathcal{P}_i \mid i\in I\}$. 
\end{lemma}

\subsection{Semiorthogonal decompositions}\label{sod}
Let $I$ be a set. Consider a set $O\subset I\times I$ such that for any $i, j\in I$ we have $(i,j)\in O$, or $(j,i)\in O$, or both $(i,j)\in O$ and $(j,i)\in O$. 

Let $\mathbb{T}$ be a triangulated category. We will construct semiorthogonal decompositions
\[\mathbb{T}=\langle \mathbb{A}_i \mid i \in I \rangle\] with summands 
triangulated subcategories $\mathbb{A}_i$ indexed by $i\in I$
such that for any $i,j\in I$ with $(i, j)\in O$ and objects $\mathcal{A}_i\in\mathbb{A}_i$, $\mathcal{A}_j\in\mathbb{A}_j$, we have 
\[\Hom_{\mathbb{T}}(\mathcal{A}_i,\mathcal{A}_j)=0.
\]

\subsection{Semiorthogonal decompositions for matrix factorizations}

We state a version of \cite[Lemma 3.9]{HLP}, \cite[Proposition 1.10]{OrLG} which constructs a semiorthogonal decomposition of a category of matrix factorizations from a semiorthogonal decomposition of the ambient stack.
Recall the notations from Subsections \ref{mf11} and \ref{gradedMFdef}.

\begin{prop}\label{propsodpotential}

Let $\mathcal{Y}=Y/G$ be a smooth quotient stack with $Y$ affine and let $f\colon \mathcal{Y}\to \mathbb{C}$ be a regular function. 

    (a) Assume $D^b(\mathcal{Y})$ has a $D^b(\mathbb{C})$-linear semiorthogonal decomposition $D^b(\mathcal{Y})=\langle \mathbb{A}_i\mid i\in I\rangle,$ for $I$ a set as in Subsection \ref{sod}. Then there is a semiorthogonal decomposition 
    \[\mathrm{MF}(\mathcal{Y}, f)=\langle \mathrm{MF}(\mathbb{A}_i, f)\mid i\in I\rangle.\]

    (b) Assume we are in the setting of Subsection \ref{gradedMFdef} and assume that $D^b_{\mathbb{C}^*}(\mathcal{Y})$ has a $D^b_{\mathbb{C}^*}(\mathbb{C})$-linear semiorthogonal decomposition $D^b_{\mathbb{C}^*}(\mathcal{Y})=\langle \mathbb{A}_i\mid i\in I\rangle.$
    Then there is a semiorthogonal decomposition
    \[\mathrm{MF}^{\mathrm{gr}}(\mathcal{Y}, f)=\langle \mathrm{MF}^{\mathrm{gr}}(\mathbb{A}_i, f)\mid i\in I\rangle.\]
\end{prop}

\begin{proof}
    We only prove (a), the proof of (b) is similar. We also prove instead that there is a semiorthogonal decomposition 
    $D_{\mathrm{sg}}(\mathcal{Y}_0)=\langle D_{\mathrm{sg}}(\mathbb{A}_i)\mid i\in I\rangle.$
    By \cite[Proposition 3.8]{HLP}, there is a semiorthogonal decomposition 
    \begin{equation}\label{SODHLP}
    \mathrm{Fun}_{D^b(\mathbb{C})}(D^b(\mathrm{pt}), D^b(\mathcal{Y}))=\langle \mathrm{Fun}_{D^b(\mathbb{C})}(D^b(\mathrm{pt}), \mathbb{A}_i)\mid i\in I\rangle.
     \end{equation}
    Denote by $\mathrm{FM}\colon D^b(\mathcal{Y}_0)\xrightarrow{\sim} \mathrm{Fun}_{D^b(\mathbb{C})}(D^b(\mathrm{pt}), D^b(\mathcal{Y}))$ the equivalence induced by Fourier-Mukai functors, see \cite[Theorem 1.1.2]{BZNP}.
     We claim that there is an equiavelence 
     \begin{equation}\label{equivFM}
         \mathrm{FM}\colon \mathbb{A}_{0}\xrightarrow{\sim} \mathrm{Fun}_{D^b(\mathbb{C})}(D^b(\mathrm{pt}), \mathbb{A})
     \end{equation} for a subcategory $\mathbb{A}\subset D^b(\mathcal{Y})$. It suffices to check that the inverse $\mathrm{FM}^{-1}$ sends $\mathrm{Fun}_{D^b(\mathbb{C})}(D^b(\mathrm{pt}), \mathbb{A})$ to $\mathbb{A}_0$.
    Indeed, consider the diagram:
    \begin{equation*}
        \begin{tikzcd}
        \mathrm{Fun}_{D^b(\mathbb{C})}(D^b(\mathrm{pt}), \mathbb{A}) \arrow[d]\arrow[r, hook]& \mathrm{Fun}_{D^b(\mathbb{C})}(D^b(\mathrm{pt}), D^b(\mathcal{Y}))\arrow[d, "\Phi"]\arrow[r, "\sim"]& D^b(\mathcal{Y}_0)\arrow[ddl, bend left, "\Psi"]\\
        \mathrm{Fun}_{D^b(\mathrm{pt})}(D^b(\mathrm{pt}), \mathbb{A}) \arrow[d, "\sim"]\arrow[r, hook]& \mathrm{Fun}_{D^b(\mathrm{pt})}(D^b(\mathrm{pt}), D^b(\mathcal{Y}))\arrow[d, "\sim"]& \\
        \mathbb{A}\arrow[r, hook]& D^b(\mathcal{Y})& 
        \end{tikzcd}
    \end{equation*}
    The equivalences above are induced by functor $\mathrm{FM}^{-1}$. Thus $\Psi=i_*$, where recall that $i\colon \mathcal{Y}_0\hookrightarrow\mathcal{Y}$ is the natural inclusion. Thus $\mathrm{FM}^{-1}\mathrm{Fun}_{D^b(\mathbb{C})}(D^b(\mathrm{pt}), \mathbb{A})$ is a subcategory of $D^b(\mathcal{Y}_0)$ of objects $F$ such that $i_*(F)$ is in $\mathbb{A}$, thus $\mathrm{FM}^{-1}\mathrm{Fun}_{D^b(\mathbb{C})}(D^b(\mathrm{pt}), \mathbb{A})$ is a subcategory of $\mathbb{A}_0$. Then the semiorthogonal decomposition \eqref{SODHLP} becomes 
    \[D^b(\mathcal{Y}_0)=\langle \mathbb{A}_{i,0}\mid i\in I\rangle.\] The claim then follows from \cite[Proposition 1.10]{OrLG}.
\end{proof}

\subsection{Partitions of natural numbers}\label{partitions}

Let $d\in \mathbb{N}$. Denote by $p_2(d)$ the number of partitions of $d$ and by $p_3(d)$ the number of $3$D/ plane partitions of $d$. The generating series corresponding to these two types of partitions are
given as follows: 
\begin{align}\label{macmahon:formula}
    \sum_{d\geq 0}p_2(d)q^d&=\prod_{n\geq 1}\frac{1}{1-q^n},\\
  \notag \sum_{d\geq 0}p_3(d)q^d&=M(q):=\prod_{n\geq 1}\frac{1}{\left(1-q^n\right)^n}.
\end{align}
The second formula is called the MacMahon's formula.

\subsection{DT categories for \texorpdfstring{$\mathbb{C}^3$}{C3}}\label{subsec:catC3}
Let $\mathcal{X}^f(d)$ be the stack from (\ref{def:Xd}). 
We define 
\begin{align*}
\mathrm{NHilb}(d):=\X^f(d)^{\text{ss}}\subset \X^f(d)
\end{align*}
to be the open substack
of stable points with respect to the linearization $\text{det}\colon GL(d)\to \mathbb{C}^*$. Let $V$ be a vector space of dimension $d$.
The points of $\X^f(d)^{\text{ss}}$ correspond to tuples \begin{align*}
(v, X, Y, Z)\in V \oplus \text{End}(V)^{\oplus 3}
\end{align*}
such that $v$ generates $V$ under the action of $X, Y,$ and $Z$.
The stack $\X^f(d)^{\text{ss}}$ is a smooth variety called the noncommutative Hilbert scheme of $d$ points on $\mathbb{C}^3$. 

Consider the following 
super-potential of $Q$
\begin{align*}
    W=z[x, y] \in \mathbb{C}[Q]/[\mathbb{C}[Q], \mathbb{C}[Q]], 
\end{align*}
where $x, y, z\in \mathbb{C}[Q]$ correspond to the three loops 
of the quiver $Q$. 
Consider the regular function 
\begin{align}\label{def:Wd}
\Tr W \colon  \X(d)\to \mathbb{C}, \ (X, Y, Z) \mapsto \Tr Z[X, Y]. 
\end{align}
It is well-known (and easy to see)
that the critical locus 
$\mathrm{Crit}(\Tr W)$
in $\X(d)$
is isomorphic to 
the moduli stack of zero-dimensional sheaves on 
$\mathbb{C}^3$ with length $d$. 
We also write $\Tr W$ as $\Tr W_d$
when it is necessary to remember the dimension. 

The super-potential $W$ 
also induces 
a super-potential for $Q^f$, 
hence an 
analogous regular function on $\X^f(d)$
which coincides with the 
pull-back of $\Tr W$
by the projection to $\mathcal{X}(d)$. 
In particular, we have the regular function  
\begin{equation}\label{equation:NHilbTrW}
    \Tr W \colon \text{NHilb}(d)\to \mathbb{C}.
\end{equation}
The critical locus of 
$\Tr W$ in $\text{NHilb}(d)$
is isomorphic to the Hilbert scheme $\text{Hilb}(\mathbb{C}^3, d)$ of $d$ points on $\mathbb{C}^3$, see~\cite[Proposition~3.1.1, Remark~3.1.2]{DimSz}.

We will also consider equivariant and graded versions of matrix factorizations for the regular function \eqref{equation:NHilbTrW}.
The group $(\mathbb{C}^{\ast})^3$
acts on the edges $(x, y, z)$
of the quiver $Q$ by scalar multiplication. 
Consider the two dimensional subtorus 
$T \subset (\mathbb{C}^{\ast})^3$
which preserves the super-potential $W$
\begin{align}\label{torus:T}
(\mathbb{C}^{\ast})^2
\stackrel{\cong}{\to}
    T:=\{(t_1, t_2, t_3) \in (\mathbb{C}^{\ast})^{\times 3} \mid t_1 t_2 t_3=1\}. 
\end{align}
The isomorphism above is given by $(t_1, t_2) \mapsto (t_1, t_2, t_1^{-1} t_2^{-1})$.
Then $T$ acts on $\mathcal{X}(d)$, 
$\mathcal{X}^f(d)$
and $\mathrm{NHilb}(d)$
preserving $\Tr W$. 
We will also consider the 
grading given by 
$\mathbb{C}^*$ scaling with weight $2$ the space $\mathfrak{gl}(V)$ for $V$ a vector space. For example, we can choose an edge $e\in \{x, y, z\}$ of $Q$ and let $\mathbb{C}^*$ scale with weight $2$ the linear map corresponding to $e$. 
Contrary to the $T$-action, the super-potential 
$W$ has weight $2$ with respect to such a grading. 

In the above setting, 
we define DT categories as follows: 

\begin{defn}\label{def:catDT2}
The DT category for $\mathbb{C}^3$ is defined by 
\begin{align}\label{cat:DT}
\mathcal{DT}(d):=\text{MF}(\text{NHilb}(d), \Tr W).
\end{align}
We also define some variants of 
it, the graded version, the $T$-equivariant version, and 
the graded $T$-equivariant version
\begin{align*}
  	\mathcal{DT}^{\bullet}_{\ast}(d)
	&:=\text{MF}^{\bullet}_{\ast}(\mathrm{NHilb}(d), \Tr W), \ 
	\ast \in \{\emptyset, T\}, \bullet\in \{\emptyset, \mathrm{gr}\}. 
\end{align*}
\end{defn}
The categories $\mathcal{DT}(d)$
are categorifications of the Donaldson-Thomas invariants of $\mathbb{C}^3$ in the following sense.
Let $\text{HP}_\bullet$ denote the periodic cyclic homology of a small dg-category. Then $\text{HP}_i$ are $\mathbb{C}(\!(u)\!)$-vector spaces for $i\in \mathbb{Z}/2$. The dimensions over $\mathbb{C}(\!(u)\!)$ of the periodic cyclic homologies of (\ref{cat:DT})
are finite and
\[
\sum_{i\in\{0,1\}} (-1)^i \dim_{\mathbb{C}(\!(u)\!)} \text{HP}_i\left(\text{MF}(\text{NHilb}(d), \Tr W)\right)=(-1)^d\text{DT}_d, 
\] where $\text{DT}_d$ is the Donaldson-Thomas invariant of $\mathbb{C}^3$ for zero-dimensional closed subschemes
with length $d$, see \cite[Theorem 1.1]{E} for the computation of $\text{HP}_\cdot$ in terms of vanishing cohomology and \cite[Subsection 2.4]{Sz} for the relation between DT invariants and vanishing cohomology, see also \cite[Subsection 3.3]{T} for a discussion of categorification of more general DT invariants. 

By \cite[Corollary 4.3]{BF}, we have the equality
\begin{equation}\label{BehFan}
    \text{DT}_d=(-1)^dp_3(d).
\end{equation}
In particular, the generating series of DT-invariants of $\mathbb{C}^3$ has the following product expansion
\begin{equation}\label{MacMahon}
    \sum_{d\geq 0}\text{DT}_d\,q^d=M(-q)=\prod_{n\geq 1}\frac{1}{\left(1-(-q)^n\right)^n}
\end{equation}
using MacMahon's formula from Subsection \ref{partitions}.

\section{A categorical analogue of MacMahon's formula}\label{section:MacMahon}

In this section, we prove Theorem~\ref{MacMahonthm}, a slightly more general and precise version
of Theorem~\ref{thm:intro1}. 
\subsection{Main theorem}
Let us recall 
the diagram (\ref{dia:Hilb}) from the introduction. 
As we mentioned, the two sides in (\ref{dia:Hilb}) do not give 
a derived equivalence for $X=\mathbb{C}^3$. 
We begin with an observation on the diagram (\ref{dia:Hilb})
for the Hilbert scheme of two points on $\mathbb{C}^3$:

\begin{example}\label{exam:d2}
    Let $X=\mathbb{C}^3$
    and $d=2$. 
    In this case, the scheme $\mathrm{Hilb}(\mathbb{C}^3, 2)$
    is smooth and the left arrow in (\ref{dia:Hilb}) 
    is the blow-up of the diagonal 
    $\mathbb{C}^3=\Delta \subset \mathrm{Sym}^2(\mathbb{C}^3)$. 
    So we have the diagram 
    \begin{align*}
        \xymatrix{
        E \ar@<-0.3ex>@{^{(}->}[r]^-{i} \ar[d]_-{p} & \mathrm{Hilb}(\mathbb{C}^3, 2) \ar[d]  \\
        \Delta \ar@<-0.3ex>@{^{(}->}[r] & \mathrm{Sym}^2(\mathbb{C}^3)
        }
    \end{align*}
    where $E$ is the exceptional locus of the blow-up of $\Delta$ and
    where $p$ is a $\mathbb{P}^2$-bundle. 
    Instead of an equivalence like (\ref{intro:McKay}), we have the 
    semiorthogonal decomposition (which easily follows from~\cite[Example~8.8 (1)]{Kawamata}):
    \begin{align}\label{sod:exam}
    D^b(\mathrm{Hilb}(\mathbb{C}^3, 2)) =
    \langle i_{\ast}(p^{\ast}D^b(\Delta) \otimes \mathcal{O}(-1)), 
    D^b((\mathbb{C}^3)^{\times 2}/\mathfrak{S}_2)). 
    \end{align}
    Note that the first summand has one generator and the second 
    summand has two generators, so $D^b(\mathrm{Hilb}(\mathbb{C}^3, 2))$
    has $p_3(2)=3$ generators. 
\end{example}

It is known that for $d\geq 4$, the Hilbert scheme 
of points $\mathrm{Hilb}(\mathbb{C}^3, d)$
is a highly singular scheme, so 
its derived category is not well-behaved. 
Instead, we consider the categories of matrix factorizations $\mathcal{DT}(d)$
as in Definition~\ref{def:catDT2}, 
and construct a semiorthogonal decomposition 
similar to (\ref{sod:exam}). 
The main result of this section is the following:
\begin{thm}\label{MacMahonthm}
Let 
$\mu \in \mathbb{R}$ with $2k \mu \notin \mathbb{Z}$ for $1\leq k \leq d$. Then
there is a semiorthogonal decomposition 
\[\mathcal{DT}(d)=
\left\langle \boxtimes_{i=1}^k \mathbb{S}(d_i)_{w_i}\right\rangle,\] where the right hand side consists of all tuples $A=(d_i, w_i)_{i=1}^k$ 
with $\sum_{i=1}^k d_i=d$
such that its associated $A'=(d_i, v_i)_{i=1}^k$, see Subsection \ref{prime}, satisfies \begin{equation}\label{i}
    -\mu\leq \frac{v_1}{d_1}<\cdots<\frac{v_k}{d_k}\leq 1-\mu.\end{equation}
    Consider the set $O$ from \eqref{def:setO} and assume that $A=(d_i, w_i)_{i=1}^k$ and $B=(e_i, u_i)_{i=1}^s$ are as above and $(A, B)\in O$. Let $\mathcal{A}\in \boxtimes_{i=1}^k \mathbb{S}(d_i)_{w_i}$ and $\mathcal{B}\in \boxtimes_{i=1}^s \mathbb{S}(e_i)_{u_i}$. Then $\Hom(\mathcal{A}, \mathcal{B})=0$.
    There are also analogous semiorthogonal decompositions of the variants of 
    DT categories 
    $\mathcal{DT}^{\mathrm{gr}}(d)$, 
    $\mathcal{DT}_T(d)$, $\mathcal{DT}_T^{\mathrm{gr}}(d)$
    defined in Definition~\ref{def:catDT2}. 
\end{thm}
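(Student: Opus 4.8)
The plan is to produce the decomposition in two movements: first construct a semiorthogonal decomposition on the ambient stack $\mathcal{X}^f(d)$ governed by the determinant GIT stability, and then descend it to matrix factorizations using that $\Tr W$ is pulled back from $\mathcal{X}(d)$. First I would use that $\mathrm{NHilb}(d)=\mathcal{X}^f(d)^{\mathrm{ss}}$ is the semistable locus for the determinant linearization, so by the theory of $\Theta$-stratifications the unstable locus carries a Kempf-Ness stratification whose strata are indexed by equivalence classes of antidominant cocharacters $\lambda_{\dd}$, hence by ordered partitions $\dd=(d_i)_{i=1}^k$ of $d$ as in Subsection~\ref{paco}. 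This produces a semiorthogonal decomposition of $D^b(\mathcal{X}^f(d))$ in which a distinguished \emph{window} block restricts to an equivalence onto $D^b(\mathrm{NHilb}(d))$ while the complementary blocks are pushed forward from the strata; the window is cut out by the weight conditions attached to the polytope $\mathbf{W}(d)$, with the real parameter $\mu$ playing the role of the linearization and the hypothesis $2k\mu\notin\mathbb{Z}$ guaranteeing that no weight sits on a wall. This is the refinement of the Lunts--\v{S}penko--Van den Bergh decomposition for the framed triple loop quiver mentioned in Subsection~\ref{subsec:catHilb}.

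On each stratum the fixed-point stack is $\times_{i=1}^k\mathcal{X}(d_i)$, and I would identify the contribution of each factor with a magic window for the quasi-symmetric representation $R(d_i)=\mathfrak{gl}(V_i)^{\oplus 3}$ in the sense of \v{S}penko--Van den Bergh~\cite{SVdB} and Halpern-Leistner--Sam~\cite{hls}. These magic windows are exactly the subcategories $\mathbb{M}(d_i)_{w_i}\subset D^b(\mathcal{X}(d_i))$ defined by the polytope $\mathbf{W}(d_i)_{w_i}$ together with the diagonal weight $w_i$. Reassembling the strata through the categorical Hall product $\ast=p_{\dd\ast}q_{\dd}^{\ast}$ of Subsection~\ref{paco} realizes the window block as the span of the images of $\boxtimes_{i=1}^k\mathbb{M}(d_i)_{w_i}$, the shift $w_i=v_i+d_i\big(\sum_{i>j}d_j-\sum_{j>i}d_j\big)$ recording the diagonal weight of the twist incurred along the proper pushforward $p_{\dd\ast}$ (the attracting/repelling weights of $\lambda_{\dd}$ acting on $\mathfrak{g}$ and the framing). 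Since $\Tr W$ is pulled back from $\mathcal{X}(d)$ and is preserved by $q_{\dd}^{\ast}$ and $p_{\dd\ast}$, applying $\mathrm{MF}(-,\Tr W)$ and invoking functoriality of matrix factorizations for smooth pullback and proper pushforward (Subsection~\ref{MF}) carries the decomposition to $\mathcal{DT}(d)$, with summands $\boxtimes_{i=1}^k\mathbb{S}(d_i)_{w_i}$ where $\mathbb{S}(d_i)_{w_i}=\mathrm{MF}(\mathbb{M}(d_i)_{w_i},\Tr W)$; the graded and $T$-equivariant variants follow by running the identical argument while tracking the extra $\mathbb{C}^{\ast}$- and $T$-weights, which commute with every $\lambda_{\dd}$.

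The hard part will be the semiorthogonality, i.e.\ the Hom-vanishing between distinct summands in the order prescribed by the set $O$ of \eqref{def:setO}, equivalently by the slope inequalities \eqref{i}. I would prove it by induction over the tree of partitions $\mathcal{T}$ of Subsection~\ref{tree}, measuring complexity with the $(r,p)$-invariant on the weight lattice. For two summands indexed by $A$ and $B$, the relevant Hom space is computed by repeatedly applying the right adjoint of the Hall product and then Borel-Weil-Bott (Proposition~\ref{bbw}); each step replaces a weight $\chi$ by $\chi-\sigma_J$ for a multiset $J$ of strictly negative $\lambda$-weights. By Proposition~\ref{rgoesdown} every such replacement strictly decreases $(r,p)$, so the induction terminates, and the surviving terms are forced to vanish unless the partition and slope data of $A$ and $B$ agree, precisely because the faces $F_r(\lambda)^{\mathrm{int}}$ of $\mathbf{W}(d)$ separate admissible weights by slope. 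The delicate point, and the source of the combinatorial difficulty emphasized above, is to control the weights generated by iterating the coproduct and to show they never re-enter the window of the wrong block; this is exactly where the weight estimates underlying the PBW theorem of \cite{P} are indispensable.
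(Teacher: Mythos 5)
Your blueprint---a window subcategory of $D^b(\mathcal{X}^f(d))$, a Hall-product decomposition of that window, and the formal passage to matrix factorizations via \cite[Proposition 2.1]{P0}---is the same as the paper's, and your use of Propositions~\ref{bbw} and~\ref{rgoesdown} points at the right tools. But there is a genuine gap at the central step: the equivalence between the window and $D^b(\mathrm{NHilb}(d))$. The theorem of Halpern-Leistner \cite[Theorem 2.10]{halp} gives an equivalence $\mathbb{G}_{\eta}\simeq D^b(\mathrm{NHilb}(d))$ for the \emph{grade-restriction} window $\mathbb{G}_{\eta}$, defined by weight bounds along the Kempf-Ness strata. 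What your argument needs is that the window \emph{generated by vector bundles} $\mathcal{O}\otimes\Gamma_{GL(d)}(\chi)$ with $\chi+\rho+\delta\in\mathbf{V}(d)$ (note: the polytope $\mathbf{V}(d)$ of \eqref{V}, which includes the framing weights $\mathrm{sum}[0,\beta_k]$, not $\mathbf{W}(d)$ as you write---the framing term is exactly what produces the bounds $-\mu\leq v_1/d_1$ and $v_k/d_k\leq 1-\mu$ in \eqref{i}) restricts to an equivalence. The inclusion of this polytope window into $\mathbb{G}_{\eta}$ gives fully-faithfulness, but \emph{essential surjectivity} is a magic-window statement, and here your argument breaks: $R^f(d)=V\oplus\mathfrak{gl}(V)^{\oplus 3}$ is \emph{not} a symmetric representation because of the framing, so \cite[Theorem 3.2]{hls} does not apply to $\mathcal{X}^f(d)$. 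Invoking magic windows for the unframed $R(d_i)=\mathfrak{gl}(V_i)^{\oplus 3}$ on fixed loci does not repair this, since the fixed stacks of the relevant strata are $\mathcal{X}(k)\times\mathcal{X}^f(d-k)$ and re-import the framing. The paper's Proposition~\ref{magic} resolves this with an idea absent from your proposal: double the framing arrow to get the symmetric quiver $Q'$ with $R'(d)=V\oplus V^{\vee}\oplus\mathfrak{gl}(V)^{\oplus 3}$, apply the magic window theorem there, and use that $\mathcal{X}'(d)^{\mathrm{ss}}\to\mathcal{X}^f(d)^{\mathrm{ss}}$ is an affine bundle, so pullback along the zero section is essentially surjective.

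Two further corrections. First, your description of the stratification is wrong: by Lemma~\ref{lem:length2}, the Kempf-Ness cocharacters for the determinant linearization are all of the form $(t^{-1},\ldots,t^{-1},1,\ldots,1)$, i.e.\ correspond to partitions of length at most two, so the strata are \emph{not} indexed by arbitrary ordered partitions; the partition-indexed structure of the final decomposition comes not from the stratification but from the Hall-product decomposition of the window category (Proposition~\ref{prop1} on $\mathcal{X}(d)$, lifted to $\mathcal{X}^f(d)$ by Proposition~\ref{fullyfaithful} and Corollary~\ref{cor1}). Second, you deploy the $(r,p)$-induction only for Hom-vanishing, whereas in the paper semiorthogonality and fully-faithfulness of the Hall-product functors are quoted from \cite[Proposition 4.3]{P} and \cite[Corollary 3.3]{P2}, and the $(r,p)$-induction via Proposition~\ref{rgoesdown} is what proves \emph{generation}: every generator $\mathcal{O}_{\X(d)}\otimes\Gamma_{GL(d)}(\chi)$ of the window is resolved, through the cone construction \eqref{mor:C} and Borel-Weil-Bott, by objects in the Hall-product images, with the slope constraints \eqref{i} enforced by Proposition~\ref{10}. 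Your proposal asserts this generation (``reassembling the strata \ldots realizes the window block as the span'') but gives no argument for it, and without it the decomposition is only a semiorthogonal list of subcategories, not a decomposition of $\mathcal{DT}(d)$.
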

The categories of generators 
$\mathbb{S}(d)_w$ will be 
defined in Subsection \ref{ss2}. In the ungraded (and non-equivariant) case, the product above is the product of dg-categories over $\mathbb{C}(\!(\beta)\!)$, where $\beta$ has homological degree $-2$, see \cite[Subsection 1.2, Theorem 4.1.3]{Preygel}. In the graded and non-equivariant) case, the product is the usual product of dg-categories over $\mathbb{C}$.
\begin{example}
    For $\mu=1-\varepsilon$ with $0<\varepsilon \ll 1$, 
    the semiorthogonal decompositions 
    in Theorem~\ref{MacMahonthm} for $2\leq d \leq 4$ 
    are given as follows: 
    \begin{align*}
        \mathcal{DT}(2)=&\langle \mathbb{S}(2)_{-1}, \mathbb{S}(2)_0 \rangle, \\
        \mathcal{DT}(3)=&\langle \mathbb{S}(3)_{-2}, \mathbb{S}(2)_{-3} \boxtimes 
        \mathbb{S}(1)_2, \mathbb{S}(3)_{-1}, \mathbb{S}(3)_0 \rangle, \\
        \mathcal{DT}(4)=&\langle \mathbb{S}(4)_{-3}, \mathbb{S}(3)_{-5} \boxtimes 
        \mathbb{S}(1)_{3}, \mathbb{S}(4)_{-2}, 
        \mathbb{S}(2)_{-5} \boxtimes \mathbb{S}(2)_4,  \\
        &  \quad  \mathbb{S}(3)_{-4} \boxtimes 
        \mathbb{S}(1)_3, \mathbb{S}(4)_{-1}, \mathbb{S}(4)_0 \rangle. 
    \end{align*}
    Compare the semiorthogonal decomposition for $\mathcal{DT}(2)$
    with (\ref{sod:exam}). 
\end{example}

We briefly explain an outline of the proof of Theorem \ref{MacMahonthm}.
Let $\delta=d\mu \tau_d \in M_{\mathbb{R}}$. 
Let $\mathbb{D}(d; \delta)$ 
be the dg-subcategory of $D^b(\X(d))$ generated by the vector bundles $\mathcal{O}_{\X(d)}\otimes\Gamma_{GL(d)}(\chi)$ for $\chi$ a dominant weight such that
\begin{align*}
\chi+\rho+\delta\in \textbf{V}(d),
\end{align*}
where $\textbf{V}(d)\subset M(d)_{\mathbb{R}}$ is defined in Subsection \ref{ss1}. We use 
the same techniques as in the study
of categorical Hall algebras of quivers with potential \cite{P} to prove in Proposition \ref{prop1} that
\[\mathbb{D}(d; \delta)=
\left\langle \ast_{i=1}^k \mathbb{M}(d_i)_{v_i+d_i\left(\sum_{i>j}d_j-\sum_{j>i}d_j\right)}\right\rangle,\] where $\ast$ denotes the product in the Hall algebra.
Let $a \colon \text{NHilb}(d)\to \X(d)$ be the projection. 
Using results of Halpern-Leistner \cite[Theorem 2.10]{halp} and Halpern-Leistner--Sam \cite[Theorem 3.2]{hls}, the category $D^b(\text{NHilb}(d))$ is generated by the complexes $a^*\mathcal{F}$ for $\mathcal{F}\in \mathbb{D}(d; \delta)$. We prove that there is a semiorthogonal decomposition
\[D^b(\text{NHilb}(d))=
\left\langle \boxtimes_{i=1}^k \mathbb{M}(d_i)_{v_i+d_i\left(\sum_{i>j}d_j-\sum_{j>i}d_j\right)}\right\rangle,\] see \eqref{SODnhilb}.
The semiorthogonal 
decomposition for matrix factorizations then follows formally, see for example \cite[Proposition 2.1]{P0}.

\subsection{Categories of generators}\label{ss2} 
We now introduce notation and terminology which appear in Theorem~\ref{MacMahonthm}. 
We will be using the stacks introduced in Subsection \ref{sss1} and the polytopes defined in Subsection \ref{ss1}.

\subsubsection{}\label{ss:Ddelta}
For $w \in \mathbb{Z}$, 
we denote by $D^b(\mathcal{X}(d))_w$
the subcategory of $D^b(\mathcal{X}(d))$
consisting of objects of
weight $w$ with respect to the diagonal 
cocharacter $1_d$ of $T(d)$.  
We have the direct sum decomposition 
\begin{align*}
    D^b(\mathcal{X}(d))=\bigoplus_{w\in \mathbb{Z}}
    D^b(\mathcal{X}(d))_w. 
\end{align*}
We define the dg-subcategories 
\begin{align*}
    \mathbb{M}(d) \subset D^b(\X(d)), \ 
    (\mbox{resp. }
    \mathbb{M}(d)_w \subset D^b(\X(d))_w)
\end{align*}
to be generated 
by the vector bundles $\OO_{\X(d)}\otimes \Gamma_{GL(d)}(\chi)$, where $\chi$ is a dominant weight of $T(d)$ such that
\begin{equation}\label{M}
    \chi+\rho\in \textbf{W}(d), \ 
    (\mbox{resp. } \chi+\rho \in \textbf{W}(d)_w). 
    \end{equation}
    Note that $\mathbb{M}(d)$
    decomposes into the direct sum of $\mathbb{M}(d)_w$
    for $w \in \mathbb{Z}$. 
    Moreover, taking the tensor product with respect to 
    the determinant character $\det \colon GL(d) \to \mathbb{C}^{\ast}$
    gives an equivalence
    \begin{align}\label{equiv:periodic}
    \otimes \det \colon 
    \mathbb{M}(d)_w \stackrel{\sim}{\to} \mathbb{M}(d)_{d+w}. 
    \end{align}

We fix $\mu \in \mathbb{R}$
and set $\delta := d \mu \tau_d$. 
We define the dg-subcategories
\begin{align}\label{subcat:D}
    \mathbb{D}(d; \delta)
    \subset D^b(\X(d)), \ 
    \mbox{(resp. } \mathbb{E}(d; \delta)
\subset D^b(\X^f(d)) )
    \end{align}
    to be generated by the vector bundles $\OO_{\X(d)}\otimes \Gamma_{GL(d)}(\chi)$
    (resp.~
    $\OO_{\X^f(d)}\otimes \Gamma_{GL(d)}(\chi)$),
    where $\chi$ is a dominant weight of $T(d)$ such that
\begin{equation}\label{D}
    \chi+\rho+\delta\in \textbf{V}(d).\end{equation}
 Note that for the projection $a \colon \X^f(d) \to \X(d)$,
 the pull-back $a^{\ast}$ restricts to the functor 
 \begin{align*}
     a^{\ast} \colon \mathbb{D}(d; \delta) \to \mathbb{E}(d; \delta)
 \end{align*}
 whose essential image generates $\mathbb{E}(d; \delta)$. 

\subsubsection{}\label{gradingMF}
Let $\Tr W$ be the regular function (\ref{def:Wd}). 
We define the subcategory
\[\mathbb{S}(d):=\text{MF}(\mathbb{M}(d), \Tr W)
\subset \mathrm{MF}(\mathcal{X}(d), \Tr W)
\] 
as in Subsection \ref{mf11}.
It decomposes into the direct sum of 
$\mathbb{S}(d)_w$ for $w \in \mathbb{Z}$, 
where $\mathbb{S}(d)_w$
is defined similarly to $\mathbb{S}(d)$
using $\mathbb{M}(d)_w$. 

In the setting of Definition~\ref{def:catDT2}, 
we also consider subcategories 
for $\ast \in \{\emptyset, T\}$, 
$\bullet \in \{\emptyset, \text{gr}\}$
defined as in Subsections \ref{mf11} or \ref{gradedMFdef}: 
\[\mathbb{S}^{\bullet}_{\ast}(d):=
\text{MF}^{\bullet}_{\ast}(\mathbb{M}(d), \Tr W)
\subset \text{MF}^{\bullet}_{\ast}(\X(d), \Tr W). 
\] 
The subcategory $\mathbb{S}^{\bullet}_{\ast}(d)_w$
is also defined in a similar way.  
As we mentioned in the introduction, we call 
the subcategories $\mathbb{S}^{\bullet}_{\ast}(d)_w$ \textit{quasi-BPS categories} because of their similarity
to BPS invariants (BPS sheaves) in (cohomological) DT theory. 
As in (\ref{equiv:periodic}), 
there is an equivalence 
\begin{align}\label{equiv:periodic2}
    \otimes \det \colon 
    \mathbb{S}_{\ast}^{\bullet}(d)_w \stackrel{\sim}{\to} \mathbb{S}_{\ast}^{\bullet}(d)_{d+w}. 
    \end{align}


\subsection{Preliminaries on weights}

In this subsection, we recall a decompositions of weights from \cite{P2}. We also introduce the sets of partitions of $(d,w)$ which are used in the semiorthogonal decomposition of $D^b(\X(d))$ from loc. cit.

\subsubsection{}\label{prime} 

Let $A$ be a partition $(d_i, w_i)_{i=1}^k$ of $(d, w)$ and consider its corresponding antidominant cocharacter $\lambda$. Define the weights
\begin{align*}
    \chi_A:=\sum_{i=1}^k w_i\tau_{d_i},\
    \chi'_A:=\chi_A+\mathfrak{g}^{\lambda>0}.
\end{align*}
Consider weights $\chi'_i\in M(d_i)_{\mathbb{R}}$ such that
$\chi'_A=\sum_{i=1}^k \chi'_i$. 
 Let $v_i$ be the sum of coefficients of $\chi'_i$ for $1\leq i\leq k$; alternatively, $v_i:=\langle 1_{d_i}, \chi'_i\rangle$. We denote the above transformation by
  \begin{align}\label{trans:A}
     A\mapsto A', \ 
     (d_i, w_i)_{i=1}^k\mapsto (d_i, v_i)_{i=1}^k.
 \end{align}
Explicitly, the weights $v_i$ for $1\leq i\leq k$ are given by 
\begin{align}\label{w:prime}
    v_i=w_i+d_i\left(\sum_{j>i}d_j -\sum_{j<i}d_j \right). 
\end{align}

\subsubsection{}\label{dectree2}

We will use a decomposition of dominant weights from \cite[Subsection 3.2.8]{P2}. 

Before stating it, we introduce some notations. For $d_a$ a summand of a partition of $d$, denote by $M(d_a)\subset M(d)$ the subspace as in the decomposition from Subsection~\ref{id} and let $A\subset \{1,\ldots, d\}$ be the set of indices of weights of standard representation corresponding to $M(d_a)\subset M(d)$. 
Assume that $\ell$ is a partition of a dimension $d_a\in \mathbb{N}$, alternatively, $\ell$ is an edge of the tree $\mathcal{T}$ introduced in Subsection \ref{tree}.
Let $\lambda_{\ell}$ be the corresponding 
antidominant cocharacter of $T(d_a)$. Let \[\mathcal{W}_{\ell}\subset \{(\beta_i-\beta_j)^{\times 3} \mid i, j \in A\}\] be the multiset of weights
with $\langle \lambda_{\ell}, \beta\rangle>0$. Define \[\mathfrak{g}_\ell:=\sum_{\beta\in\mathcal{W}_{\ell}}\beta.\]
The following is 
\cite[Subsection 3.2.8]{P2}, \cite[Subsection 3.1.2]{P0}:

\begin{prop}\label{prop:decompchi}
Let $\chi$ be a dominant weight in $M(d)_\mathbb{R}$ and let $w=\langle 1_d, \chi\rangle$. 
There exists:
\begin{enumerate}
    \item a path of partitions $T$, see Subsection \ref{tree}, with decomposition 
$(d_i)_{i=1}^k$ at the end vertex,
\item coefficients $r_\ell$ for $\ell\in T$ such that $r_{\ell}>1/2$ 
if $\ell$ corresponds to a partition with length $>1$, 
and $r_{\ell}=0$ otherwise; further, if $\ell, \ell'\in T$ are vertices 
corresponding to partitions with length $>1$, and 
with a path from $\ell$ to $\ell'$, then $r_{\ell}> r_{\ell'}> \frac{1}{2}$, and
\item dominant weights $\psi_i\in\mathbf{W}(d_i)_0$ for $1\leq i\leq k$
such that:
\end{enumerate}
\begin{equation}\label{decompchi}
    \chi+\rho+\delta=-\sum_{\ell\in T}r_\ell(3\mathfrak{g}_\ell)+\sum_{i=1}^k\psi_i+(w+d\mu)\tau_d.
\end{equation}
\end{prop}

We briefly explain how to write $\chi$ in the form \eqref{decompchi}. If $\chi+\rho+\delta$ has $r$-invariant $\leq \frac{1}{2}$, then $T$ consists 
of one vertex corresponding to the partition $d$ of length $1$. Otherwise, we want to find the smallest homothetic polygon to $\textbf{W}(d)$ which contains $\chi+\rho+\delta$ on its boundary. This polygon is $2r\textbf{W}(d)$, where $r$ is the $r$-invariant of $\chi+\rho+\delta$. Next, we choose the face of $2r\textbf{W}(d)$ which contains $\chi+\rho+\delta$ in its interior: let $\lambda$ be an antidominant cocharacter of $SL(d)\cap T(d)$ such that $\chi+\rho+\delta\in F_r(\lambda)^{\text{int}}$. Assume $\lambda$ has associated partition $(e_i)_{i=1}^n$.
The weights $\chi+\rho+\delta$ on $F_r(\lambda)^{\text{int}}$ can be written as 
\begin{equation}\label{firststepdecomp}
\chi+\rho+\delta=-r(3\mathfrak{g}^{\lambda>0})+\psi,
\end{equation}
where $r(\psi)=s<r$, $\psi$ is $GL(d)^\lambda$-dominant, and \[\psi\in 2s\mathbf{W}\left(GL(d)^\lambda\right):=\left(\bigoplus_{1\leq i\leq n}\,2s\mathbf{W}(e_i)_0\right)\oplus \mathbb{R}\tau_d\subset \bigoplus_{1\leq i\leq n}\,M(e_i)_{\mathbb{R}}\cong M(d)_{\mathbb{R}},\]
see \cite[Corollary 3.4]{P}.
The partition $\ell=(e_i)_{i=1}^n$ is the maximal partition in $T$ associated to $\chi$ in \eqref{decompchi}. If the weight $\psi$ has $r$-invariant $\leq \frac{1}{2}$, we stop. If not, we continue to use the process above and the decomposition \eqref{firststepdecomp} to further decompose the weight $\psi$, and thus the weight $\chi+\rho+\delta$, until we reach \eqref{decompchi}.
\medskip

Let $M(d)^+_w$ be the subset of $M(d)$ of integral dominant weights $\chi$ with $\langle 1_d, \chi\rangle=w$. Define $M(d)^+_{w,\mathbb{R}}$ analogously.
We denote by $S^d_w$ the set of all paths of partitions $T$ with coefficients $r_\ell$ for $\ell\in T$ satisfying (2) from the statement of Proposition \ref{prop:decompchi} for a dominant integral weight $\chi\in M(d)^+_w$. There is a function:
\[\Theta\colon M(d)^+_w\to S^d_w, \,\, \Theta(\chi)=(T, r_\ell).\]
Let $S'^d_w$ be the set of all paths of partitions $T$ with coefficients $r_\ell$ for $\ell\in T$ satisfying (2) from the statement of Proposition \ref{prop:decompchi} for a dominant real weight $\chi\in M(d)_{w,\mathbb{R}}$. Thus $S^d_w\subset S'^d_w$. We also denote by $\Theta$ the induced function 
\[\Theta\colon M(d)^+_{w,\mathbb{R}}\to S'^d_w.\]

\subsubsection{}\label{subsec333} We continue the discussion from the previous subsection. In this subsection, we give a more explicit description of the set $S^d_w$.
Write 
\begin{align*}
    \chi=\sum_{i=1}^k \chi_i, \ 
    \chi_i \in M(d_i), \ w_i :=\langle 1_{d_i}, 
    \chi_i\rangle. 
\end{align*}
Let $\lambda$ be the antidominant cocharacter 
which corresponds to the decomposition 
$(d_i)_{i=1}^k$. Let $\rho_i$ be half the sum of positive roots of $\mathfrak{gl}(d_i)$ for $1\leq i\leq k$.
Using that
\begin{align}\label{rho:id}
    \rho-\sum_{i=1}^k \rho_i=
    \frac{1}{2}\mathfrak{g}^{\lambda<0} 
    =-\frac{1}{2}\mathfrak{g}^{\lambda>0},
\end{align}
we can rewrite (\ref{decompchi}) as 
\begin{align}\label{decompchibis}
    \chi=-\sum_{\ell\in T}r_l(3\mathfrak{g}_l)+\frac{1}{2}
    \mathfrak{g}^{\lambda>0}+w\tau_d+\sum_{i=1}^k 
    (\psi_i-\rho_i). 
\end{align}
Since the first three terms are of the form 
$\sum_{i=1}^k x_i \tau_{d_i}$
for some $x_i\in \mathbb{R}$ for $1\leq i\leq k$
and $\langle 1_{d_i}, \psi_i-\rho_i \rangle=0$, 
we can write 
\begin{align}\label{chi:i}
\chi=\sum_{i=1}^k w_i \tau_{d_i}+\sum_{i=1}^k(\psi_i-\rho_i).  
\end{align}
In particular, we have 
$\chi_i+\rho_i=\psi_i+w_i\tau_{d_i} \in \textbf{W}(d_i)_{w_i}$. 

Define $v_i$ for $1\leq i\leq k$ by the formula 
\begin{equation}\label{wprime}
    \sum_{i=1}^k v_i\tau_{d_i}=-\sum_{\ell\in T}3\left(r_\ell-\frac{1}{2}\right)\mathfrak{g}_\ell+w\tau_d
\end{equation} and let 
\begin{equation}\label{Aprime232}
A=(d_i, w_i)_{i=1}^k, \ 
    A'=(d_i, v_i)_{i=1}^k. 
\end{equation}
By (\ref{decompchibis}) and (\ref{wprime}),
we have the identity
\begin{align*}
    \chi+\mathfrak{g}^{\lambda>0}=\sum_{i=1}^k v_i
    \tau_{d_i}+\sum_{i=1}^k (\psi_i -\rho_i). 
\end{align*}
It follows that  
the partitions $A$ and $A'$ are related by the 
transformation (\ref{trans:A}).
\begin{prop}\label{prop:dominanttree}
	Let $\chi \in M$ be a dominant weight
	and write $\chi+\rho+\delta$ as in (\ref{decompchi}). 
	Let $v_i$ for $1\leq i\leq k$ be defined by (\ref{wprime}). 
	Then we have that \begin{equation}\label{ineq}
    \frac{v_1}{d_1}<\cdots<\frac{v_k}{d_k}.
\end{equation}
\end{prop}

\begin{proof}
Let $s_i:=\frac{v_i}{d_i}$ for $1\leq i\leq k$.
Let $\phi$ 
be the maximal (antidominant) cocharacter realizing the $r$-invariant of $\chi+\rho+\delta$ and assume that the partition associated to $\phi$ is \[(d_1+\cdots+d_{a_1}, d_{a_1+1}+\cdots+d_{a_2},\ldots, d_{a_b+1}+\cdots+d_k)\] for $1\leq a_1<a_2<\cdots<a_b<k$ for some $b\in\mathbb{N}$. Using induction on $d$, it suffices to check that $s_{a_i}<s_{a_i+1}$ for $1\leq i\leq b$. 
We assume for simplicity that $b=1$, and let $a:=a_1$, $r:=r(\chi+\rho+\delta)$,  $\sigma:=d_1+\cdots+d_a$, and $\tau:=d-\sigma$. 
We have that
\[\chi+\rho+\delta=-3r\mathfrak{g}^{\phi>0}+\chi_1+\chi_2,\] with $\chi_1 \in 2r' \mathbf{W}(\sigma)$ and $\chi_2 \in 2r'\mathbf{W}(\tau)$ such that $r'<r$.
We need to check that $s_{a}<s_{a+1}$. The coefficient of $\beta_{\sigma}$ in \eqref{wprime} is at most the one computed by setting $r_{\ell}=r$ for all $\ell$. Therefore we have the inequality
\begin{equation}\label{wprimea}
s_{a}\leq -3\left(r-\frac{1}{2}\right)\tau+3\left(r-\frac{1}{2}\right)(\sigma-1)
+\frac{w}{d}.
\end{equation}
Similarly, the 
coefficient of $\beta_{\sigma+1}$ is at least
\begin{equation}\label{wprimea2}
s_{a+1}\geq 3\left(r-\frac{1}{2}\right)\sigma-3\left(r-\frac{1}{2}\right)(\tau-1)
+\frac{w}{d},
\end{equation}
and thus $s_{a}<s_{a+1}$.
\end{proof}

Let $T^d_w$ be the set of partitions $A=(d_i, w_i)_{i=1}^k$ of $(d, w)$ such that, for its corresponding $A'=(d_i, v_i)_{i=1}^k$ in \eqref{Aprime232}, the inequality \eqref{ineq} holds.
By Proposition \ref{prop:dominanttree}, there is a function 
\[\varphi\colon S^d_w\to T^d_w.\] 

In the remaining of this subsection, we show that the function $\varphi$ is an isomorphism.
For $A=(d_i, w_i)_{i=1}^k$ in $T^d_w$, consider the weight $\chi_A:=\sum_{i=1}^k w_i\tau_{d_i}$. Then $\chi_A\in M(d)^+_{w,\mathbb{R}}$ is a dominant real weight, so it has a corresponding 
$\Theta(\chi_A)=(T', r'_{\ell'})\in S'^d_w$.
Thus there is a function
\[\psi\colon T^d_w\to S'^d_w,\,\, \psi(A)=\Theta(\chi_A)=(T',r'_{\ell'}).\]

\begin{prop}\label{propdecomrprime}
    We have that 
\[\chi_A=-\sum_{\ell'\in T'}r'_{\ell'}(3\mathfrak{g}_{\ell'})+w\tau_d.\]
\end{prop}

\begin{proof}
We use induction on $k$.
Let the $r$-invariant of $\chi_A$ be $r'$, let $\phi$ be an antidominant cocharacter of $SL(d)\cap T(d)$ such that
that $\chi+\rho\in F_{r'}(\phi)^{\mathrm{int}}$, and let $\underline{e}=(e_i)_{i=1}^s$ be the partition corresponding to $\phi$.
Then there exists $r''<r'$ such that
\[\chi_A+\rho=-3r'\mathfrak{g}^{\phi>0}+\sum_{i=1}^s (\chi_i+\rho_i)+w\tau_d\] with $\chi_i+\rho_{i}\in 2r''\textbf{W}(e_i)$ for all $1\leq i\leq s$, see \cite[Corollary 3.4]{P}. By an argument similar to the one used to prove Proposition \ref{prop:dominanttree}, we have that $\dd\geq \underline{e}$. Further, we may use the induction hypothesis on each $\chi_i$ for $1\leq i\leq s$, and then the conclusion follows. 
\end{proof}

\begin{prop}\label{functionpsi}
    The image of the function $\psi$ lies in $S^d_w$.
\end{prop}

\begin{proof}
    For $1\leq i\leq k$, there are dominant weights $\theta_i\in M(d_i)_{0, \mathbb{R}}$ which are linear combinations with coefficients in $[0,1]$ of weights in $\mathfrak{gl}(d_i)$ such that $\chi':=\chi_A+\sum_{i=1}^k \theta_i$ is an integral dominant weight. Note that 
    \[\chi'+\rho=-\sum_{\ell'\in T'}r'_{\ell'}(3\mathfrak{g}_{\ell'})+\sum_{i=1}^k (\theta_i+\rho_i)+w\tau_d\] and $\theta_i+\rho_i\in \textbf{W}(d_i)_0$.
    By the uniqueness of the decomposition \ref{decompchi} for $\chi'$, we see that
    \[\Theta(\chi_A)=(T', r'_{\ell'})=\Theta(\chi')\in S^d_w.\]
\end{proof}

\begin{prop}\label{ST}
    The functions $\varphi$ and $\psi$ are inverses to each other. There is thus an isomorphism 
    \[\varphi\colon S^d_w\xrightarrow{\sim} T^d_w.\]
\end{prop}

\begin{proof}

Let $\chi\in M(d)^+_w$ and let $\Theta(\chi)=(T, r_\ell)$. From the decomposition \eqref{decompchi} for $\chi$ and \eqref{chi:i}, we have that 
    \[-\sum_{\ell\in T}r_\ell(3\mathfrak{g}_\ell)+\sum_{i=1}^k \rho_i+w\tau_d=\sum_{i=1}^k w_i\tau_{d_i}+\rho.\]
    By the uniqueness of the decomposition \eqref{decompchi} for the dominant real weight $\chi_A=\sum_{i=1}^k w_i\tau_{d_i}$, we see that $\psi(A')=(T, r_\ell)$, so $\psi\varphi$ is the identity.  

    The composition $\varphi\psi$ is the identity from Propositions \ref{propdecomrprime} and \ref{functionpsi}.
\end{proof}

\subsection{Decompositions of weights}

In this subsection, we further restrict the set $S^d_w$ to a set of partitions of $(d,w)$ which will index the summands of $\mathcal{DT}(d)$ from Theorem \ref{MacMahonthm}. The main result is the existence of decomposition of weights as in Proposition \ref{dominantij}.
\medskip

Consider the sets 
\begin{align}\label{def:CDE}
C=\{(i, j)\mid \,1\leq i,j\leq d\}, \ 
D=\{(i, j) \mid \,1\leq j<i\leq d\}, \ 
E=\{1,\ldots, d\}.
\end{align}

\begin{prop}\label{dominantij}
Let $\chi\in M(d)_\mathbb{R}$ be a strictly dominant weight such that  
\[\chi\in \frac{3}{2}\mathrm{sum}_C[0, \beta_i-\beta_j]+\mathrm{sum}_E[0, \beta_i].\] 
Then \[\chi\in \frac{3}{2}\mathrm{sum}_D[0, \beta_i-\beta_j]+\mathrm{sum}_E[0, \beta_i].\] 
\end{prop}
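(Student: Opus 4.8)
The plan is to translate both memberships into support-function inequalities and reduce the claim to a finite list of scalar inequalities indexed by subsets of $\{1,\dots,d\}$, which I then derive from strict dominance. Write $\chi=\sum_{k=1}^d c_k\beta_k$, so strict dominance is $c_1<\cdots<c_d$, and set $\mathbf V(d)=\tfrac32\,\mathrm{sum}_C[0,\beta_i-\beta_j]+\mathrm{sum}_E[0,\beta_k]$ and $\mathbf V'(d)=\tfrac32\,\mathrm{sum}_D[0,\beta_i-\beta_j]+\mathrm{sum}_E[0,\beta_k]$; the hypothesis is $\chi\in\mathbf V(d)$ and the goal is $\chi\in\mathbf V'(d)$. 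Let $\beta_1^\vee,\dots,\beta_d^\vee$ be the dual basis of cocharacters and, for nonempty $S\subseteq\{1,\dots,d\}$, put $\lambda_S:=\sum_{k\in S}\beta_k^\vee$. Since $\mathbf V'(d)$ is the Minkowski sum of the segments $[0,\tfrac32(\beta_i-\beta_j)]$ ($i>j$) and $[0,\beta_k]$, its normal fan is the arrangement of the hyperplanes $\langle\,\cdot\,,\beta_i-\beta_j\rangle=0$ and $\langle\,\cdot\,,\beta_k\rangle=0$, whose rays are exactly the $\pm\lambda_S$. As a point lies in a polytope iff the support function bounds it on every outer normal, it suffices to prove, for all nonempty $S$ with $|S|=s$:
\begin{align}\label{plan:UB}
\sum_{k\in S}c_k&\leq \tfrac32 M_S+s, & M_S&:=\#\{(i,j):i>j,\ i\in S,\ j\notin S\},\\
\label{plan:LB}
\sum_{k\in S}c_k&\geq -\tfrac32 N_S, & N_S&:=\#\{(i,j):i>j,\ i\notin S,\ j\in S\}.
\end{align}

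The hypothesis $\chi\in\mathbf V(d)$ gives these same inequalities but with $M_S$ and $N_S$ each replaced by the larger number $M_S+N_S=s(d-s)$ of all mixed pairs, coming from the symmetric support function of $\mathbf V(d)$. Only for the normals in the (anti)dominant chambers do the two versions agree: the suffix normals $\lambda_{\{d-s+1,\dots,d\}}$ give \eqref{plan:UB} for top sets, and the bottom normals $-\lambda_{\{1,\dots,m\}}$ give the prefix estimates $\sum_{k\leq m}c_k\geq-\tfrac32 m(d-m)$, which are \eqref{plan:LB} for $S=\{1,\dots,m\}$. The whole content is to upgrade these to arbitrary $S$.

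Focusing on \eqref{plan:LB} (as \eqref{plan:UB} is symmetric under $k\mapsto d+1-k$, which swaps dominant and antidominant), I would write $\lambda_S=\sum_{m}\gamma_m\lambda_{\{1,\dots,m\}}+\mu$ with $\gamma_m\geq0$ and $\mu$ in the cone $\mathrm{cone}\{\beta_{k+1}^\vee-\beta_k^\vee\}$ on which every dominant weight is nonnegative, whence
\[\sum_{k\in S}c_k=\langle\lambda_S,\chi\rangle=\sum_m\gamma_m\Big(\sum_{k\leq m}c_k\Big)+\langle\mu,\chi\rangle\geq-\tfrac32\sum_m\gamma_m\,m(d-m),\]
using $\gamma_m\geq0$, the prefix estimates, and $\langle\mu,\chi\rangle\geq0$. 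The correct choice is forced: $\mu\in\mathrm{cone}\{\beta_{k+1}^\vee-\beta_k^\vee\}$ exactly when the concave piecewise-linear function $\Psi$ with $\Psi(m)=\sum_{m'}\gamma_{m'}\min(m,m')$ dominates the counting function $\sigma_m:=|S\cap\{1,\dots,m\}|$, so I take $\gamma_m$ to be the (nonnegative, by concavity) second differences of the least concave majorant $\widehat\sigma$ of $\sigma$. For a singleton $S=\{a\}$ this is just $c_a\geq\tfrac1a\sum_{k\leq a}c_k\geq-\tfrac32(d-a)=-\tfrac32N_{\{a\}}$.

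The crux is then the bookkeeping that this certificate costs no more than the oriented count, i.e. $\sum_m\gamma_m\,m(d-m)\leq N_S$. Summation by parts turns the left side into $2\sum_{m=1}^{d-1}\widehat\sigma(m)-s(d-1)$, while $N_S=\sum_{m=1}^{d-1}\sigma_m-\binom{s}{2}$, so the inequality becomes a comparison between the least concave majorant and $\sigma$; in every case I have checked it holds with equality, reflecting that \eqref{plan:LB} is a genuine (tight) facet of $\mathbf V'(d)$. Establishing this comparison uniformly in $S$ — controlling the least concave majorant of an arbitrary counting function against the combinatorial quantity $N_S$ — is the step I expect to demand the most care; strict dominance is used only to make the relevant boundary inequalities strict and is otherwise inessential.
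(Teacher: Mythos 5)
Your reductions are all correct: membership in the asymmetric zonotope is equivalent to the inequalities indexed by the rays $\pm\lambda_S$; the hypothesis supplies exactly the prefix and suffix cases; admissible certificates $(\gamma_m)$ correspond to nondecreasing concave piecewise-linear $\Psi$ with $\Psi(0)=0$, $\Psi\geq\sigma$, $\Psi(d)=s$; the least concave majorant $\widehat\sigma$ is the optimal such choice; and both bookkeeping identities (certificate cost $\sum_m\gamma_m m(d-m)=2\sum_{m=1}^{d-1}\widehat\sigma(m)-s(d-1)$, and $N_S=\sum_{m=1}^{d-1}\sigma_m-\binom{s}{2}$) hold. But the argument stops at its self-identified crux: the inequality $\mathrm{cost}\leq N_S$, equivalently $2\sum_{m=1}^{d-1}\bigl(\widehat\sigma(m)-\sigma_m\bigr)\leq M_S$, is only "checked in cases", and your prediction that it always holds with equality is false. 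For $d=4$, $S=\{2,4\}$ the majorant is the straight line, the cost is $0$, while $N_S=1$; relatedly, the heuristic that the lower-bound facet must be saturated fails there, since dominance forces $c_2+c_4\geq\tfrac{1}{2}\sum_k c_k\geq 0>-\tfrac{3}{2}N_S$. So, as written, this is a genuine gap at the decisive step.

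The gap is fillable, and the completed argument is genuinely different from the paper's. Split $\{0,1,\dots,d\}$ at the contact points $0=m_0<\cdots<m_r=d$ of $\widehat\sigma$, so that $\widehat\sigma$ is the chord of $\sigma$ on each $[m_{t-1},m_t]$; then $\sum_{m}(\widehat\sigma(m)-\sigma_m)$ is a sum of interval-local contributions, while $M_S$ is at least the sum of the interval-local pair counts $M^{(t)}$ (pairs straddling two intervals are discarded). On one interval of length $l$ containing $k$ elements of $S$ at relative positions $i_1<\cdots<i_k\leq l$, a direct computation gives: twice the sum of $(\mathrm{chord}-\sigma)$ over the interior equals $2\sum_t i_t-kl-k$, and $M^{(t)}=\sum_t i_t-\binom{k+1}{2}$; hence the local inequality is exactly $\sum_t i_t\leq kl-\binom{k}{2}$, which is immediate because the $i_t$ are distinct integers $\leq l$ (equality iff they form a suffix of the interval, which is why blocks of consecutive integers are the tight cases). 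Note that this completed proof needs only weak dominance. By contrast, the paper's proof of Proposition \ref{dominantij} is a local exchange argument: it takes a decomposition \eqref{chicij} that is lexicographically extremal with respect to the last nonpositive coefficient, uses strict dominance to produce one of four admissible perturbations $D'$, and derives a contradiction, with Proposition \ref{supattained} supplying attainment of the relevant supremum. The two approaches share nothing beyond the statement: yours buys a global, certificate-based proof (and the weak-dominance refinement) at the price of the convex-geometric bookkeeping above; the paper's avoids all of that but needs strictness and a separate compactness step.
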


\begin{proof}
Let $\chi$ be a strictly dominant weight and write it as 
\begin{equation}\label{chicij}
D: \chi=\sum_{1\leq j<i\leq d}c_{ij}(\beta_i-\beta_j)+\sum_{1\leq i\leq d} d_i\beta_i,
\end{equation}
with coefficients 
\begin{equation}\label{conditionsd}
    -\frac{3}{2}\leq c_{ij}\leq \frac{3}{2} \text{ for }j<i\text{ and }0\leq d_i\leq 1.
\end{equation}
Let $p_D:=(i'j')$ be the largest pair (in lexicographic order) such that $j'<i'$ and $c_{i'j'}\leq 0$. If there is no such pair, let $p_D=\emptyset$ and declare that $\emptyset<(ij)$ for any $1\leq j<i\leq d$. 
Consider the $L^2$ metric on the vector space  $M(d)_{\mathbb{R}}$.
The limit along a convergent sequence of decompositions $D_n$ with $p_{D_n}=(ij)$ is a decomposition $\widetilde{D}$ with $p_{\widetilde{D}}\geq (ij)$. 

Among all possible decompositions $D$ in \eqref{chicij} satisfying \eqref{conditionsd}, choose one with the smallest possible (in lexicographic order) associated pair $p_D$. Assume that $p_D$ is not $\emptyset$, otherwise the statement to be proved is true for $\chi$. Write $p_D=(ij)$.
Assume that $m$ is the supremum of $c_{ij}$ for all decompositions $D$ with $p_D=(ij)$. By taking the limit along a sequence $D_n$ with $p_{D_n}=(ij)$ and $\text{lim}_n\,c_{ij, n}=m$, 
there exists a decomposition $D$ with $p_{D}=(ij)$ and $c_{ij}=m$. We will be working with this decomposition $D$ from now on.
The weight $\chi$ is strictly dominant, so 
\begin{align}\label{ineq:c}
c_{ij}+\sum_{i<l}(-c_{li})+\sum_{i>l\neq j}c_{il}+d_i> -c_{ij}+\sum_{j<l\neq i}(-c_{lj})+\sum_{j>l}c_{jl}+d_j.
\end{align}
At least one of the following holds:
\begin{enumerate}
    \item there exists $i<l$ such that $-c_{li}>-c_{lj}$,
    \item there exists $j<l<i$ such that $c_{il}>-c_{lj}$,
    \item there exists $l<j$ such that $c_{il}>c_{jl}$,
    \item $d_i>d_j$.
\end{enumerate}
We explain that in each of these scenarios, there exists a decomposition $D'$ such that $p_{D'}<p_D$ or $p_{D'}=p_D$ and $c'_{ij}>c_{ij}$, where $c'_{ab}$ and $d'_a$ are the coefficients appearing in the decomposition \eqref{chicij} for $D'$.
The existence of $D'$ contradicts the choice of $D$.

In Case (1), we have that $c_{li}<\frac{3}{2}$ and $c_{lj}>-\frac{3}{2}$, so there exists $\varepsilon>0$ such that 
\begin{align}\label{equ:c'}
    -\frac{3}{2}\leq c'_{ij}, c'_{li}, c'_{lj}\leq\frac{3}{2},
    \end{align}
    where
\begin{align}\label{equ:c'2}
    c'_{ij}=c_{ij}+\varepsilon,\
    c'_{li}=c_{li}+\varepsilon,\
    c'_{lj}=c_{lj}-\varepsilon.
\end{align}
Consider the decomposition $D'$ with coefficients $c'_{ij}$, $c'_{li}$, $c'_{lj}$, $c'_{ab}=c_{ab}$ for all other pairs $1\leq b<a\leq d$, and $d'_a=d_a$ for all $1\leq a\leq d$. Then $D'$ is a decomposition of $\chi$ because 
\begin{multline}\label{equalityprime}
    c_{ij}(\beta_i-\beta_j)+c_{li}(\beta_l-\beta_i)+c_{lj}(\beta_l-\beta_j)=\\(c_{ij}+\varepsilon)(\beta_i-\beta_j)+(c_{li}+\varepsilon)(\beta_l-\beta_i)+(c_{lj}-\varepsilon)(\beta_l-\beta_j).
\end{multline}
By the choice of $(ij)$, we have that $c_{lj}, c_{li}>0$. We can thus choose $\varepsilon>0$ such that $c'_{li}, c'_{lj}>0$.
Then either $p_{D'}=p_D$ and $c'_{ij}>c_{ij}$ or $p_{D'}<p_D$.

In Case (2), we have that $c_{il}>-\frac{3}{2}$ and $c_{lj}>-\frac{3}{2}$, so there exists $\varepsilon>0$ such that 
\begin{align}\label{equ:c'3}
-\frac{3}{2}\leq c'_{ij}, c'_{il}, c'_{lj}\leq\frac{3}{2},
\end{align}
where 
\begin{align}\label{equ:c'4}
c'_{ij}=c_{ij}+\varepsilon, \ c'_{il}=c_{il}-\varepsilon, \  c'_{lj}=c_{lj}-\varepsilon.
\end{align}
Consider the decomposition $D'$ with coefficients $c'_{ij}$, $c'_{il}$, $c'_{lj}$, $c'_{ab}=c_{ab}$ for all other $1\leq b<a\leq d$ and $d'_a=d_a$ for $1\leq a\leq d$. By the choice of $(ij)$, we have that $c_{il}>0$. We can thus choose $\varepsilon>0$ such that $c'_{il}>0$. Then either $p_{D'}=p_D$ and $c'_{ij}>c_{ij}$ or $p_{D'}<p_D$.

In Case (3), we have that $c_{il}>-\frac{3}{2}$ and $c_{jl}<\frac{3}{2}$, so there exists $\varepsilon>0$ such that \[-\frac{3}{2}\leq c'_{ij}, c'_{il}, c'_{jl}\leq\frac{3}{2},\] 
where $c'_{ij}=c_{ij}+\varepsilon$, $c'_{il}=c_{il}-\varepsilon$, and $c'_{jl}=c_{jl}+\varepsilon$.
Consider the decomposition $D'$ with coefficients $c'_{ij}$, $c'_{il}$, $c'_{jl}$, and $c'_{ab}=c_{ab}$ for $1\leq b<a\leq d$ and $d'_a=d_a$ for $1\leq a\leq d$. Then either $p_{D'}=p_D$ and $c'_{ij}>c_{ij}$ or $p_{D'}<p_D$.

In Case (4), we have that $d_i>0$ and $d_j<1$,
so there exists $\varepsilon>0$ such that \[0\leq d'_i, d'_j\leq 1,\] where $d'_i=d_i-\varepsilon$, $d'_j=d_j+\varepsilon$.
Consider the decomposition $D'$ with coefficients $c'_{ij}=c_{ij}+\varepsilon$, $d'_i$, $d'_j$ and all the other coefficients are the same as those of $D$. 
\end{proof}

For a partition $(d_i)_{i=1}^k$ of $d$, denote by $\sigma_a:=\sum_{i=1}^a d_i$ for $1\leq a\leq k$. Define the set 
\begin{align}\label{def:Da}
    D_a :=\{(i, j) \mid 
    \,\sigma_a+1\leq j<i\leq \sigma_{a+1}\}. 
    \end{align}

\begin{prop}\label{10}
Let $\mu \in \mathbb{R}$ and 
$\delta=d\mu \tau_d$. 
Let $\chi\in M(d)_{\mathbb{R}}$ be a dominant weight such that $\chi+\rho+\delta\in \mathbf{V}(d)$ with associated sets $A=(d_i, w_i)_{i=1}^k$ and $A'=(d_i, v_i)_{i=1}^k$ as in \eqref{Aprime232}. Then 
    \begin{equation}\label{ineqq}
    -\mu \leq \frac{v_1}{d_1}<\cdots<
    \frac{v_k}{d_k}\leq 1-\mu.\end{equation}
\end{prop}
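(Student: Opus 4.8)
The strict inequalities $\tfrac{v_1}{d_1}<\cdots<\tfrac{v_k}{d_k}$ are exactly the content of Proposition~\ref{prop:dominanttree}, so the plan is to establish only the two boundary estimates $-\mu\leq\tfrac{v_1}{d_1}$ and $\tfrac{v_k}{d_k}\leq 1-\mu$; these are mirror computations, so I will explain the lower bound in detail and indicate the parallel argument for the upper one. First I would record that $\chi+\rho+\delta$ is strictly dominant: $\chi$ is dominant, adding $\rho$ makes it strictly dominant, and $\delta=d\mu\tau_d$ is a multiple of the Weyl-invariant weight $\tau_d$ and so does not affect dominance. Since $\chi+\rho+\delta\in\mathbf{V}(d)$ by hypothesis, Proposition~\ref{dominantij} applies and lets me rewrite it in lower-triangular normal form
\[
\chi+\rho+\delta=\sum_{1\leq j<i\leq d}c_{ij}(\beta_i-\beta_j)+\sum_{1\leq i\leq d}b_i\beta_i,\qquad 0\leq c_{ij}\leq\tfrac32,\ \ 0\leq b_i\leq1.
\]
This reduction is the technical heart of the matter: it is precisely what forces the cross-block pairings below to be one-signed.

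Next I would extract $v_1$ by pairing with the diagonal cocharacter $1_{d_1}$ of the first block $B_1=\{1,\ldots,d_1\}$. By the explicit formula \eqref{w:prime} one has $v_1=w_1+d_1(d-d_1)$ with $w_1=\langle 1_{d_1},\chi\rangle$, and substituting $\chi=(\chi+\rho+\delta)-\rho-\delta$ gives
\[
v_1=\langle 1_{d_1},\chi+\rho+\delta\rangle-\langle 1_{d_1},\rho\rangle-\langle 1_{d_1},\delta\rangle+d_1(d-d_1).
\]
The two auxiliary pairings are elementary: from $\rho=\tfrac12\sum_{j<i}(\beta_i-\beta_j)$ one computes $\langle 1_{d_1},\rho\rangle=-\tfrac12 d_1(d-d_1)$, and from $\delta=d\mu\tau_d=\mu\sum_i\beta_i$ one gets $\langle 1_{d_1},\delta\rangle=\mu d_1$. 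Hence $\tfrac{v_1}{d_1}=\tfrac1{d_1}\langle 1_{d_1},\chi+\rho+\delta\rangle+\tfrac32(d-d_1)-\mu$.

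Finally I would bound $\langle 1_{d_1},\chi+\rho+\delta\rangle$ from below using the normal form. Here $\langle 1_{d_1},\beta_i\rangle=1$ iff $i\in B_1$, and for $j<i$ the term $\langle 1_{d_1},\beta_i-\beta_j\rangle$ vanishes unless $i\notin B_1$ and $j\in B_1$ (the case $i\in B_1$, $j\notin B_1$ is impossible since $j<i\leq d_1$ forces $j\in B_1$), in which case it equals $-1$. Thus only the $(d-d_1)d_1$ terms with $i\notin B_1$, $j\in B_1$ survive with coefficient $-c_{ij}\geq-\tfrac32$, while the $b_i$-terms are nonnegative, giving $\langle 1_{d_1},\chi+\rho+\delta\rangle\geq-\tfrac32 d_1(d-d_1)$ and therefore $\tfrac{v_1}{d_1}\geq-\mu$. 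The upper bound $\tfrac{v_k}{d_k}\leq 1-\mu$ follows by the mirror argument with the last block $B_k$, using $v_k=w_k-d_k(d-d_k)$, $\langle 1_{d_k},\rho\rangle=\tfrac12 d_k(d-d_k)$, and the estimate $\langle 1_{d_k},\chi+\rho+\delta\rangle\leq\tfrac32 d_k(d-d_k)+d_k$ coming from $c_{ij}\leq\tfrac32$ and $b_i\leq1$. The main obstacle is not any single estimate but the sign bookkeeping that correctly ties $v_1$ and $v_k$ to the extreme-block pairings, together with the essential use of the lower-triangular normal form from Proposition~\ref{dominantij}; without the latter the surviving cross-block contributions would not have a definite sign and the bounds would fail.
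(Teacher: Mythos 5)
Your proof is correct, and it rests on the same two pillars as the paper's own argument: Proposition \ref{prop:dominanttree} for the strict inequalities, and the lower-triangular normal form of Proposition \ref{dominantij} for the two boundary bounds, which in both cases are extracted by pairing against cocharacters concentrated on the extreme blocks (the paper's $\nu$ and $\nu'$ are exactly your $-1_{d_1}$ and $1_{d_k}$). Where you diverge is the middle of the argument. The paper works with the tree decomposition \eqref{decompchi2}, applies Proposition \ref{dominantij} not only to $\chi+\rho+\delta$ but also to each block component $\psi_a$ so that the $\psi_a$ can be subtracted while remaining in $\mathbf{V}(d)$, and then concludes via an extremality claim (that $\frac{3}{2}\mathfrak{g}^{\lambda<0}$, resp. $\frac{3}{2}\mathfrak{g}^{\lambda<0}+d_k\tau_{d_k}$, maximizes the relevant pairing among weights of $\mathbf{V}(d)$). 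You bypass the cleaned weight entirely: a single application of Proposition \ref{dominantij} to $\chi+\rho+\delta$, combined with the explicit transformation formula \eqref{w:prime} (giving $v_1=w_1+d_1(d-d_1)$ and $v_k=w_k-d_k(d-d_k)$) and the identity $w_i=\langle 1_{d_i},\chi\rangle$, reduces everything to direct coefficient estimates $-\frac{3}{2}d_1(d-d_1)\leq\langle 1_{d_1},\chi+\rho+\delta\rangle$ and $\langle 1_{d_k},\chi+\rho+\delta\rangle\leq\frac{3}{2}d_k(d-d_k)+d_k$. Your route is somewhat more elementary (no subtraction step, no extremality claims), at the cost of leaning on \eqref{w:prime}; but that identity is established in Subsection \ref{prime} and in the discussion preceding Proposition \ref{prop:dominanttree}, so nothing outside the paper's toolkit is used. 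Note also that both your proof and the paper's rely on the same implicit hypothesis, unstated in the proposition, that $\chi$ is dominant, so that $\chi+\rho+\delta$ is strictly dominant and Proposition \ref{dominantij} applies.
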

\begin{proof}
We use the decomposition \eqref{decompchibis}. 
By Proposition \ref{prop:dominanttree}, 
we have that \begin{equation*}
    \frac{v_1}{d_1}<\cdots<\frac{v_k}{d_k}.
\end{equation*}
Let $\lambda$ be the antidominant cocharacter associated to the decomposition $(d_i)_{i=1}^k$.
From the decompositions
 (\ref{decompchi}) and (\ref{wprime}), 
we have that
\begin{equation}\label{decompchi2}
\chi+\rho+\delta=
\frac{3}{2}\mathfrak{g}^{\lambda<0}+\sum_{i=1}^k v_i\tau_{d_i}+d\mu \tau_d+\sum_{i=1}^k \psi_i
\in \textbf{V}(d).
\end{equation}
The weight $\chi+\rho+\delta$ 
is strictly dominant, 
and the first three terms in (\ref{decompchi2}) are of the form 
$\sum_{i=1}^k x_i \tau_{d_i}$ for some $x_i \in \mathbb{R}$, 
hence 
each $\psi_i$ is also strictly dominant. 
Therefore, by 
Proposition \ref{dominantij} we have that 
\begin{align*}
\chi+\rho+\delta&\in \frac{3}{2}\text{sum}_D[0, \beta_i-\beta_j]+\text{sum}_E[0, \beta_i],\\
\psi_a&\in \frac{3}{2}\text{sum}_{D_a}[0, \beta_i-\beta_j]
\end{align*}
for all $1\leq a\leq k$. 
We can thus substract the weights $\psi_i$ in \eqref{decompchi2} and still have that
\begin{equation}\label{decompchi3}
    \frac{3}{2}\mathfrak{g}^{\lambda<0}+\sum_{i=1}^k v_i\tau_{d_i}+d\mu\tau_d\in \textbf{V}(d).
\end{equation}
Let $\nu \colon \mathbb{C}^{\ast} \to T(d)$ be the cocharacter acting with weight $-1$ on $\beta_i$ for $1\leq i\leq d_1$ and with weight zero on $\beta_i$ for $d_1<i\leq d$. 
The weight $\frac{3}{2}\mathfrak{g}^{\lambda<0}$ has maximal $\nu$-weight among weights in $\textbf{V}(d)$, so \eqref{decompchi3} implies that
\[\left\langle \nu, \frac{3}{2}\mathfrak{g}^{\lambda<0}+
\sum_{i=1}^k v_i\tau_{d_i}+
d\mu\tau_d\right\rangle=
\left\langle \nu, \frac{3}{2}\mathfrak{g}^{\lambda<0}+v_1\tau_{d_1}+d_1\mu\tau_{d_1}\right\rangle \leq \left\langle \nu, \frac{3}{2}\mathfrak{g}^{\lambda<0}\right\rangle,\] and thus we obtain that 
\begin{equation}\label{less1}
   \frac{v_1}{d_1}+\mu\geq 0. 
\end{equation}
Let $\nu' \colon \mathbb{C}^{\ast} \to T(d)$ 
be the cocharacter acting with weight $0$ on $\beta_i$ for $1\leq i\leq d-d_k$ and with weight $1$ on $\beta_i$ for $d-d_k<i\leq d$. 
The weight $\frac{3}{2}\mathfrak{g}^{\lambda<0}+d_k\tau_{d_k}$ has maximal $\nu'$-weight among weights in $\textbf{V}(d)$, so
from \eqref{decompchi3} we also have that 
\begin{align*}
\left\langle \nu', \frac{3}{2}\mathfrak{g}^{\lambda<0}+
\sum_{i=1}^k v_i\tau_{d_i}+
d\mu\tau_d\right\rangle &=
\left\langle \nu', \frac{3}{2}\mathfrak{g}^{\lambda<0}+v_k\tau_{d_k}+d_k\mu\tau_{d_k}\right\rangle \\
&\leq \left\langle \nu', \frac{3}{2}\mathfrak{g}^{\lambda<0}+d_k\tau_{d_k}\right\rangle,
\end{align*}
and thus we obtain that 
\begin{equation}\label{gre0}
   \frac{v_k}{d_k}+\mu\leq 1. 
\end{equation}
From \eqref{ineq}, \eqref{less1}, and \eqref{gre0}, we obtain the desired conclusion.
\end{proof}

We next discuss a converse to Proposition \ref{10}.

\begin{prop}\label{prop2}
    Let $(d_i, v_i)_{i=1}^k$ satisfy \eqref{ineqq} and let $\lambda$ be an antidominant cocharacter associated to $(d_i)_{i=1}^k$. Then
    \[\sum_{i=1}^k v_i\tau_{d_i}+d\mu\tau_d\in \mathrm{sum}_E[0, \beta_i].\] 
\end{prop}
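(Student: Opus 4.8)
The plan is to reduce the claim to a coordinate-wise statement in the basis $\beta_1,\ldots,\beta_d$ of $M(d)_{\mathbb{R}}$. First I would recall that, under the identification $\bigoplus_{i=1}^k M(d_i)\cong M(d)$ from Subsection~\ref{id} determined by the ordered partition $(d_i)_{i=1}^k$ associated to $\lambda$, the $i$-th block corresponds to the consecutive indices $\{\sigma_{i-1}+1,\ldots,\sigma_i\}$, where $\sigma_a=\sum_{j=1}^a d_j$ and $\sigma_0=0$. Under this identification, the definition \eqref{def:taud} gives $\tau_{d_i}=\frac{1}{d_i}\sum_{j=\sigma_{i-1}+1}^{\sigma_i}\beta_j$, viewed as a weight supported on the $i$-th block, while $\tau_d=\frac{1}{d}\sum_{j=1}^d\beta_j$ so that $d\mu\tau_d=\mu\sum_{j=1}^d\beta_j$.

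Next I would expand the weight in question directly in this basis. For an index $j$ lying in the $i$-th block, the contribution of $v_i\tau_{d_i}$ to the coefficient of $\beta_j$ is $\frac{v_i}{d_i}$, while $d\mu\tau_d$ contributes $\mu$ to the coefficient of every $\beta_j$. Hence one may write $\sum_{i=1}^k v_i\tau_{d_i}+d\mu\tau_d=\sum_{j=1}^d c_j\beta_j$, where $c_j=\frac{v_i}{d_i}+\mu$ depends only on the block $i$ containing $j$.

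Finally I would invoke the hypothesis \eqref{ineqq}, which in particular gives $-\mu\leq \frac{v_i}{d_i}\leq 1-\mu$ for every $1\leq i\leq k$, hence $0\leq c_j\leq 1$ for every $1\leq j\leq d$. Since the $\beta_j$ form a basis of $M(d)_{\mathbb{R}}$, the Minkowski sum $\mathrm{sum}_E[0,\beta_i]=\sum_{j=1}^d[0,\beta_j]$ is exactly the box $\{\sum_j c_j\beta_j\mid 0\leq c_j\leq 1\}$, so the computed coefficients place the weight inside $\mathrm{sum}_E[0,\beta_i]$, which is what we want.

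The argument is a direct verification and I do not expect a serious obstacle; the only point requiring care is the bookkeeping of the block decomposition of indices, so that each $\tau_{d_i}$ is read as a weight supported on the correct $d_i$ consecutive coordinates. I would also be mindful that the endpoint inequalities in \eqref{ineqq} are the non-strict ones (at the two ends, rather than the strict inequalities in between), and that these correspond precisely to the closed endpoints $0$ and $1$ of the intervals $[0,\beta_j]$.
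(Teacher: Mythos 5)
Your proof is correct and is essentially the same as the paper's: the paper verifies, block by block, that $v_i\tau_{d_i}+d_i\mu\tau_{d_i}$ lies in the sub-box spanned by the $\beta_j$ in the $i$-th block (which is exactly your coordinate bound $0\leq \frac{v_i}{d_i}+\mu\leq 1$) and then sums over $i$. The only cosmetic difference is that you phrase the verification coordinate-wise from the start rather than summing block-wise inclusions.
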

\begin{proof}
For each $1\leq i\leq k$, the condition $(1-\mu) d_i\geq v_i\geq -\mu d_i$ implies that 
\begin{equation}\label{inclusioni1k}
v_i\tau_{d_i}+d_i\mu\tau_{d_i}\in \mathrm{sum}[0, \beta_i].
\end{equation}
We obtain the conclusion by summing \eqref{inclusioni1k} for $1\leq i\leq k$.
\end{proof}

\subsection{Comparison of partitions}\label{comppartitions}
In this subsection, we define an order on the set of partitions $S^d_w$. This induces an order $O$ of the summands appearing in the semiorthogonal decomposition of $\mathcal{DT}(d)$
from Theorem~\ref{MacMahonthm}. 
\medskip

Fix $d\in\mathbb{N}$. 
We define an order on the tuples $A=(d_i, w_i)_{i=1}^k$ such that $\sum_{i=1}^kd_i=d$. Let $w_A:=\sum_{i=1}^kw_i$. Recall the sets \[S^d_w\cong T^d_w\] of partitions of $(d,w)$, see Proposition \ref{ST}, and let $\mathcal{S}:=\bigcup_{w\in\mathbb{Z}}S^d_w$. We will define a set $O\subset \mathcal{S}\times\mathcal{S}$. 
For $w>w'$, let $O_{w, w'}$ be the set of all pairs $(A, B)$ with $A, B\in \mathcal{S}$ such that $w_A=w$ and $w_B=w'$. For $w<w'$, let $O_{w, w'}$ be the empty set.

We explain how to compare two partitions $A, B\in S^d_w$. 
The general procedure for comparing two such partitions for an arbitrary symmetric quiver is described in \cite[Subsection 3.3.4]{P}. Consider the path of partitions $T_A$ with coefficients $r_{\ell, A}$ as in \eqref{decompchi} corresponding to $A\in S^d_w$. 
Order the coefficients $r_{\ell, A}$ in decreasing order $r'_{1,A}>r'_{2,A}>\cdots>r'_{f(A),A}.$ Each $r'_{i, A}$ for $1\leq i\leq f(A)$ corresponds to a partition $\pi_{i,A}$.  
Similarly, consider the path of partitions $T_B$ with coefficients $r_{\ell, B}$ corresponding to $B\in S^d_w$. Define similarly $r'_{1,B}>\cdots>r'_{f(B),B}$ and $\pi_{i,B}$ for $1\leq i\leq f(B)$. 

Define the set $R\subset \mathcal{S}_w\times\mathcal{S}_w$ which contains pairs $(A,B)$ such that

\begin{itemize}
    \item there exists $n\geq 1$ such that $r'_{n, A}>r'_{n, B}$ and $r'_{i, A}=r'_{i, B}$ for $i<n$, or
    \item there exists $n\geq 1$ such that $r'_{i, A}=r'_{i, B}$ for $i\leq n$, $\pi_{i, A}=\pi_{i, B}$ for $i<n$, and $\pi_{n, B}\geq \pi_{n, A}$, see Subsection \ref{compa}, or
    \item are of the form $(A, A)$.
\end{itemize}
We then let $O_{w,w}:=\mathcal{S}_w\times\mathcal{S}_w\setminus R$ and 
\begin{equation}\label{def:setO}
O:=\bigcup_{w,w'\in\mathbb{Z}}O_{w, w'}.\end{equation}

We will only use that such an order exists in the current paper. Further, in order to make the above process more accessible, we explain how to compute $r'_{1,A}$ and $\pi_{1,A}$.
Assume that $A=(d_i, w_i)_{i=1}^k\in S^d_w$. 
By \cite[Subsection 3.1.1]{P}, 
for a dominant weight $\theta$, 
the $r$-invariant is equal to
\begin{equation}\label{def:rtheta}
r(\theta)=\text{max}_\lambda \frac{\langle \lambda, \theta\rangle}{\Big\langle \lambda, \left(\text{Hom}(V, V)^{\oplus 3}\right)^{\lambda>0}\Big\rangle},
\end{equation}
where $V$ is a $\mathbb{C}$-vector space of dimension $d$ and the maximum is taken over all dominant cocharacters $\lambda$ of $GL(d)$.
Let $\lambda$ be a dominant cocharacter attaining the maximum above and
assume the associated partition of $\lambda$ is $(e_i)_{i=1}^s$. Then the maximum in \eqref{def:rtheta} is also attained for the cocharacter $\lambda_a$ with associated partition $\left(\sum_{i\leq a} e_i, \sum_{i>a} e_i\right)$, see \cite[Proposition 3.2]{P}. 

Let $\theta=\chi_A+\rho:=\sum_{i=1}^k w_i\tau_{d_i}+\rho$.
We have that $\underline{d}\geq \underline{e}$, see Subsection \ref{compa} for the notation and \eqref{decompchi}.
Let $\lambda_a$ be a cocharacter which acts with weight $\sum_{i\leq a} d_i$ on $\beta_j$ for $j>a$ and weight $-\sum_{i>a} d_i$ on $\beta_j$ for $j\leq a$. 
We have that
\[r(\chi_A+\rho)=\text{max}_a \frac{\langle \lambda_a, \chi_A+\rho\rangle}{\Big\langle \lambda_a, \left(\text{Hom}(V, V)^{\oplus 3}\right)^{\lambda>0}\Big\rangle},\] where the maximum is taken after all $1\leq a <k$.
We compute
\begin{multline}
\Big\langle \lambda_a, \text{Hom}(V, V)^{\lambda_a>0}\Big\rangle=
\Big\langle \lambda_a, \sum_{j>a\geq i}\beta_j-\beta_i\Big\rangle=\\
\left(\sum_{i>a} d_i\right)^2\left(\sum_{i\leq a}d_i\right)+\left(\sum_{i\leq a} d_i\right)^2\left(\sum_{i>a}d_i\right)
=d\left(\sum_{i>a} d_i\right)\left(\sum_{i\leq a}d_i\right).
\end{multline}
Then
\[r(\chi_A+\rho)=r'_{1,A}=\text{max}_a\left(\frac{
\left(\sum_{i\leq a}d_i\right)\left(\sum_{i>a}w_i\right)-
\left(\sum_{i> a}d_i\right)\left(\sum_{i\leq a}w_i\right)}{3d\left(\sum_{i>a} d_i\right)\left(\sum_{i\leq a}d_i\right)}+\frac{1}{6}\right),\]
where the maximum is taken after $1\leq a\leq k$.
The $r$-invariant can then we expressed as a linear function (with positive coefficients) of
\begin{equation}\label{def:rho}
   \rho_A:=\text{max}_a\left(\frac{\sum_{i>a} w_i}{\sum_{i>a} d_i}-\frac{\sum_{i\leq a} w_i}{\sum_{i\leq a} d_i}\right). 
\end{equation}
The partition $\pi_{1,A}$ can be reconstructed from all $1\leq a<k$ for which $\lambda_a$ attains the maximum of \eqref{def:rtheta}, alternatively from all $1\leq a <k$ which attain the maximum in \eqref{def:rho}.
Assume the set of all such $1\leq a < k$ is $1\leq a_2<\cdots<a_s< k$. Then $\pi_{1,A}=(e_i)_{i=1}^s$ is the partition of $d$ with terms:
\[(d_1+\ldots+d_{a_2},\ldots, d_{a_s+1}+\ldots+d_k).\]

\subsection{Semiorthogonal decompositions}

We discuss some preliminary results
needed in the proof of Theorem~\ref{MacMahonthm}:

\begin{prop}\label{prop1}
Let $\mu\in\mathbb{R}$ and let $\delta:=d\mu\tau_d\in M(d)_{\mathbb{R}}$.
Consider 
$A=(d_i, w_i)_{i=1}^k$ 
with $\sum_{i=1}^k d_i=d$ and
such that its associated $A'=(d_i, v_i)_{i=1}^k$, see Subsection \ref{prime}, satisfies \begin{equation}\label{i0}
   -\mu\leq \frac{v_1}{d_1}<\cdots<\frac{v_k}{d_k}\leq 1-\mu. 
   \end{equation}
Then the categorical Hall product (\ref{prel:hall}) restricts to 
the fully-faithful functor
\begin{align}\label{ast:MD}
   \ast \colon 
   \mathbb{M}(d_1)_{w_1} \boxtimes \cdots 
    \boxtimes \mathbb{M}(d_k)_{w_k} \to 
    \mathbb{D}(d; \delta)
    \end{align}
such that 
there is a semiorthogonal decomposition 
\begin{equation}\label{sodD}
\mathbb{D}(d; \delta)=\Big\langle \boxtimes_{i=1}^k \mathbb{M}(d_i)_{w_i}\Big\rangle.
\end{equation}
Here
the right hand side consists of all tuples $A=(d_i, w_i)_{i=1}^k$ with $\sum_{i=1}^k d_i=d$
satisfying \eqref{i0}. 
The order of the semiorthogonal decomposition is given the set $O$ 
in (\ref{def:setO}) as in Theorem~\ref{MacMahonthm}. 
\end{prop}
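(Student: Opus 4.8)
The plan is to follow the strategy used by the first author in the proof of the PBW theorem for Hall algebras of quivers with potential \cite{P}, adapting it to the twisted polytope $\mathbf{V}(d)$ and the shift $\delta=d\mu\tau_d$. The argument splits into three parts: checking that the Hall product \eqref{ast:MD} is well-defined, i.e. lands in $\mathbb{D}(d;\delta)$; showing that these Hall products generate $\mathbb{D}(d;\delta)$; and establishing fully-faithfulness together with the semiorthogonality needed for \eqref{sodD}. Throughout, the Borel--Weil--Bott computation of Proposition~\ref{bbw} is the main engine, while the polytope conditions control which terms survive.

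For well-definedness and generation, I would begin with a generating bundle $\OO_{\X(d)}\otimes\Gamma_{GL(d)}(\chi)$ of $\mathbb{D}(d;\delta)$, so that $\chi$ is dominant with $\chi+\rho+\delta\in\mathbf{V}(d)$. The tree decomposition \eqref{decompchi} of Subsection~\ref{dectree2} attaches to $\chi$ a partition $(d_i)_{i=1}^k$ and weights $\psi_i\in\mathbf{W}(d_i)_0$, and by \eqref{chi:i} one has $\chi_i+\rho_i=\psi_i+w_i\tau_{d_i}\in\mathbf{W}(d_i)_{w_i}$; thus each $\OO\otimes\Gamma(\chi_i)$ is a generator of $\mathbb{M}(d_i)_{w_i}$, and Proposition~\ref{10} guarantees that the associated tuple $A'=(d_i,v_i)$ satisfies \eqref{i0}. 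Applying Proposition~\ref{bbw} to the Hall product $\Gamma(\chi_1)\ast\cdots\ast\Gamma(\chi_k)$ expresses it as $\OO\otimes\Gamma(\chi)$ plus correction terms $\OO\otimes\Gamma((\chi-\sigma_J)^+)$ indexed by nonempty multisets $J$ of negative $\lambda$-weights; Proposition~\ref{rgoesdown} shows these corrections have strictly smaller $(r,p)$-invariant, so a descending induction on $(r,p)$ places $\OO\otimes\Gamma(\chi)$ in the subcategory generated by the Hall products. Conversely, for any tuple satisfying \eqref{i0}, Proposition~\ref{prop2} together with the triangularity of Proposition~\ref{dominantij} shows that the Hall product of generators of the $\mathbb{M}(d_i)_{w_i}$ lands in $\mathbb{D}(d;\delta)$, so \eqref{ast:MD} is well-defined.

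For fully-faithfulness and semiorthogonality, I would compute, for tuples $A$ and $B$ satisfying \eqref{i0} with antidominant cocharacters $\lambda_A,\lambda_B$, the graded space $\RHom_{\X(d)}(p_{\lambda_A*}q_{\lambda_A}^*\mathcal{A},\,p_{\lambda_B*}q_{\lambda_B}^*\mathcal{B})$ for objects $\mathcal{A},\mathcal{B}$ in the respective external products. Using the adjunction for the proper attracting maps together with base change over the fibre product $\X(d)^{\lambda_A\geq 0}\times_{\X(d)}\X(d)^{\lambda_B\geq 0}$, this reduces to a sum of contributions indexed by common coarsenings of $A$ and $B$, each evaluated by a further application of Proposition~\ref{bbw}. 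The strict slope inequalities \eqref{i0} for both $A'$ and $B'$, combined with the slope monotonicity of Proposition~\ref{prop:dominanttree}, then force every off-diagonal contribution to vanish when $(A,B)\in O$, yielding the semiorthogonality claim, and reduce the diagonal term to $\bigotimes_{i}\RHom(\mathcal{A}_i,\mathcal{B}_i)$, yielding fully-faithfulness.

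The hard part will be the combinatorics of the torus weights in these Borel--Weil--Bott computations: one must track precisely which Weyl-shifted weights $(\chi-\sigma_J)^+$ fall into each polytope $\mathbf{W}(d_i)$, and verify that the slope ordering in \eqref{i0} matches exactly the order $O$ defined in Subsection~\ref{comppartitions} through the invariants $\rho'_{A,i}$ and the cocharacters $\lambda'_{A,i}$. This matching, rather than any single homological input, is the heart of the argument; it is where the explicit slope computations of Proposition~\ref{prop:dominanttree} and Proposition~\ref{10} enter, and where the harder weight combinatorics alluded to in the introduction --- beyond those treated in \cite{P} --- must be controlled.
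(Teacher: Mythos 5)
Your first two paragraphs reproduce the paper's own argument almost exactly in structure. Well-definedness of the Hall product (\ref{ast:MD}) is the paper's Step 1, proved just as you describe by combining Proposition~\ref{bbw} with Proposition~\ref{prop2} (plus the observation that $\delta$ is Weyl-invariant, so only the unshifted weight $\sum_i\chi_i-\sigma_J+\rho+d\mu\tau_d$ needs to be tested against $\mathbf{V}(d)$). Generation is the paper's Step 3, carried out by the same descending induction on the $(r,p)$-invariant, using the tree decomposition (\ref{decompchi}), Proposition~\ref{10} to verify (\ref{i0}), the cone of the natural map (\ref{mor:C}), and Proposition~\ref{rgoesdown}.

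The one place you genuinely depart from the paper is fully-faithfulness and semiorthogonality, and this is where your proposal has a gap. The paper does not prove these directly: its Step 2 consists of the citations \cite[Proposition 4.3]{P} and \cite[Corollary 3.3]{P2}, which establish exactly these statements for the subcategories $\mathbb{M}(d)_w$. You propose instead a direct computation by base change over the fibre product $\X(d)^{\lambda_A\geq 0}\times_{\X(d)}\X(d)^{\lambda_B\geq 0}$ followed by a weight analysis of each stratum. That is indeed the strategy underlying the cited results, but your sketch does not discharge it. The strata of that fibre product are indexed by Weyl-group double cosets rather than by ``common coarsenings'', and, more importantly, the vanishing of off-diagonal contributions is not forced by the slope inequalities (\ref{i0}) together with Proposition~\ref{prop:dominanttree}: that proposition concerns the tree decomposition of a single weight and says nothing about Hom spaces between two different Hall products. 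The semiorthogonality order is not the slope order at all but the finer order $O$ of Subsection~\ref{comppartitions}, built from the invariants $\rho'_{A,i}$ and the cocharacter comparison $\succeq$, and verifying the required vanishing against this order is essentially the entire content of \cite[Proposition 4.3]{P}. You correctly flag this matching as ``the hard part'', but as written it is assumed rather than proved; to complete the argument you would either have to invoke the results the paper cites, or reproduce their combinatorial proof in full.
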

We divide the proof into three steps: 
\begin{step}
The categorical Hall product (\ref{prel:hall})
restricts to the functor (\ref{ast:MD}). 
\end{step}
\begin{proof}
Let $A=(d_i, w_i)_{i=1}^k$ be as above and let $\lambda$ be the corresponding antidominant cocharacter of $(d_i)_{i=1}^k$.
For $1\leq i\leq k$, consider a weight $\chi_i\in M(d_i)$ such that 
$\chi_i+\rho_i\in \textbf{W}(d_i)_{w_i}$. 
Write $\chi_i+\rho_i=\psi_i+w_i\tau_{d_i}$ with
$\psi_i \in \textbf{W}(d_i)_{0}$
for $1\leq i\leq k$. 
In order to show that 
the categorical Hall product (\ref{prel:hall})
restricts to the functor (\ref{ast:MD}), 
it suffices to check that
\begin{align*}
    \ast_{i=1}^k\left( \mathcal{O}_{\mathcal{X}(d_i)} \otimes \Gamma_{GL(d_i)}(\chi_i)\right)
    =p_{\lambda *}q_\lambda^*\left(\mathcal{O}_{\X(d)^\lambda}\otimes\Gamma_{L}\left(\sum_{i=1}^k \chi_i\right)\right)
    \in \mathbb{D}(d; \delta),
\end{align*}
where $L:=\times_{i=1}^k GL(d_i)$ and the 
first identity follows from the definition of the
categorical Hall product. 
For a set 
\begin{align}\label{set:J}
J\subset \{\beta\text{ wt of }R(d) \mid 
\langle \lambda, \beta\rangle<0\},
\end{align}
let $\sigma_J:=\sum_J \beta$.
By Proposition \ref{bbw}, it suffices to show that for all subsets $J$ as above, we have that
\[\left(\sum_{i=1}^k\chi_i-\sigma_J\right)^++\rho+d\mu\tau_d\in \textbf{V}(d).\]
The weight $d\mu\tau_d$ is Weyl invariant, so it suffices to check that
\begin{equation}\label{incl}
    \sum_{i=1}^k\chi_i-\sigma_J+\rho+d\mu\tau_d\in \textbf{V}(d).
\end{equation}
Write 
\begin{align*}
    \sum_{i=1}^k\chi_i+\rho+d\mu \tau_d &=
\sum_{i=1}^k w_i\tau_{d_i}+\sum_{i=1}^k(\psi_i
-\rho_i)+\rho+d\mu \tau_d \\
&=
-\frac{3}{2}\mathfrak{g}^{\lambda>0}+\sum_{i=1}^k v_i\tau_{d_i}+\sum_{i=1}^k(\psi_i
+d_i \mu \tau_{d_i}).
\end{align*}
By Proposition \ref{prop2}, we have that
\[\sum_{i=1}^k v_i\tau_{d_i}+d\mu\tau_d\in 
\text{sum}_E[0, \beta_i].\]
Further, write
$-\mathfrak{g}^{\lambda>0}=\sum_{D'}(\beta_i-\beta_j)$
where $D'=D\setminus \bigcup_{a=1}^k D_a$, 
see (\ref{def:CDE}), (\ref{def:Da}) for the definition of $D$ and $D_a$.
Then for any partial sum $\sigma_J$ as above, we have that
\[-\frac{3}{2}\mathfrak{g}^{\lambda>0}+\sum_{i=1}^k v_i\tau_{d_i}+d\mu\tau_d-\sigma_J\in \frac{3}{2}\text{sum}_{D'}[\beta_j-\beta_i, \beta_i-\beta_j]+\text{sum}_E[0, \beta_i].\]
Then 
\[-\frac{3}{2}\mathfrak{g}^{\lambda>0}+\sum_{i=1}^k v_i\tau_{d_i}+d\mu\tau_d-\sigma_J+\sum_{i=1}^k \psi_i\in \textbf{V}(d),\]
and thus \eqref{incl} holds. 
\end{proof}

\begin{step}
The functor (\ref{ast:MD}) is fully-faithful 
and the essential images of such functors 
for partitions satisfying (\ref{i0})
are 
semiorthogonal. 
\end{step}
\begin{proof}
The semiorthogonality and fully-faithfulness 
follow from \cite[Proposition 4.3]{P}, \cite[Corollary 3.3]{P2}. 
\end{proof}

\begin{step}
The essential images of (\ref{ast:MD}) 
for partitions satisfying (\ref{i0})
generate $\mathbb{D}(d;\mu)$. 
\end{step}
\begin{proof}
Let $\chi$ be a dominant weight in $M(d)$ such that $\chi+\rho+\delta\in \textbf{V}(d)$. It suffices to show that 
the object 
\begin{align}\label{obj:D}
\mathcal{O}_{\X(d)}\otimes \Gamma_{GL(d)}(\chi)
\in \mathbb{D}(d; \delta)
\end{align}
is generated by the categories on the right hand side
of (\ref{sodD}). 
Following the proof in \cite[Proposition 4.1]{P} (which goes back to the proof of \cite[Theorem 1.1.2]{SVdB2}), 
we use induction on the $(r,p)$-invariant of $\chi+\rho+\delta$ with respect to the polytope $\textbf{W}(d)$. 
The base of the induction is $r\leq 1/2$, where 
(\ref{obj:D}) is an object of 
$\mathbb{M}(d)_{w}$ with $-\mu\leq w/d \leq 1-\mu$, in particular
generated by objects from the right hand side of (\ref{sodD}). 

By the discussion in Subsection \ref{dectree2}, there exists a path $T$ of partitions with associated decomposition $(d_i)_{i=1}^k$ and antidominant cocharacter $\lambda$
such that $r_\ell>\frac{1}{2}$ for all $\ell\in T$ and 
\[\chi+\rho+\delta=-\sum_{\ell\in T}r_\ell\left(3\mathfrak{g}_\ell\right)+\sum_{i=1}^k\psi_i+(w+d\mu)\tau_d,\]
where $w=\langle 1_{d}, \chi \rangle$. 
Write \begin{equation}\label{decompositionchi}
    \chi=\sum_{i=1}^k \chi_i\text{ with } 
    \chi_a \in M(d_a)\text{ for }1\leq a\leq k,
\end{equation} 
where $M(d_a)$ is 
generated by the weights $\beta_i$ for $\sigma_a<i\leq \sigma_{a+1}$. Let $w_i=\langle 1_{d_i}, \chi_i\rangle$ and consider the partition $A=(d_i, w_i)_{i=1}^k$ of $(d, w)$. 
By (\ref{chi:i}),
 we have $\chi_i+\rho_i \in \textbf{W}(d_i)_{w_i}$. 
 Therefore we have the object
 \begin{align*}
    \mathcal{O}_{\X(d)^\lambda}\otimes \Gamma_{L}\left(\sum_{i=1}^k\chi_i\right)\in \boxtimes_{i=1}^k \mathbb{M}(d_i)_{w_i}
    \end{align*}
    where $L:=\times_{i=1}^k GL(d_i)$. There is a 
    natural morphism 
\begin{align}\label{mor:C}
\mathcal{O}_{\X(d)}\otimes \Gamma_{GL(d)}(\chi)\to p_{\lambda*}q_\lambda^*\left(\mathcal{O}_{\X(d)^\lambda}\otimes \Gamma_{L}\left(\sum_{i=1}^k\chi_i\right)\right).\end{align}
Let $C$ be its cone.
By 
 Proposition~\ref{10}, 
the associated partition $A'=(d_i, v_i)$
in (\ref{trans:A}) satisfies (\ref{i0}). 
Therefore the right hand side in (\ref{mor:C}) 
is an object in $\mathbb{D}(d; \delta)_w$
by Step~1, 
hence
the cone $C$ is also an 
object 
in $\mathbb{D}(d; \delta)_w$. Further,
the argument of Step~1 
shows that 
the cone $C$ is generated by the vector bundles   $\mathcal{O}_{\X(d)}\otimes
\Gamma_{GL(d)}
\left(\left(\chi-\sigma_J\right)^+\right)$,
where $\sigma_J$ is a sum of weights in a non-empty subset
$J$ in (\ref{set:J}). 
By Proposition \ref{rgoesdown}, the $(r,p)$-invariants of 
the weights 
$(\chi-\sigma_J)^{+}$ of these vector bundles is strictly less than the $(r,p)$-invariant of $\chi$. Repeating this process, we obtain that indeed $\mathcal{O}_{\X(d)}\otimes \Gamma_{GL(d)}(\chi)$ is generated by the categories on the right hand side of (\ref{sodD}).
\end{proof}

Let $\mathcal{X}^f(d)$
be the moduli stack for the framed quiver 
$Q^f$
as in (\ref{def:Xd}), and consider the projection
$a \colon \mathcal{X}^f(d) \to \mathcal{X}(d)$. 
We have the following proposition: 

\begin{prop}\label{fullyfaithful}
    Let $(d, w)\in \mathbb{N}\times\mathbb{Z}$.
    The functor $a^{\ast} \colon D^b(\X(d))_w\to D^b\left(\X^f(d)\right)$ is fully faithful.
    Moreover, for $\mathcal{E}_i \in D^b(\mathcal{X}(d))_{w_i}$
    with $i=1, 2$ and $w_1>w_2$, we have 
    $\Hom(a^{\ast}\mathcal{E}_1, a^{\ast}\mathcal{E}_2)=0$.
\end{prop}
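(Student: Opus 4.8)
The plan is to recognize $a$ as the projection from the total space of a vector bundle and then reduce both statements to a weight computation for the diagonal cocharacter $1_d$.

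First I would note that $R^f(d)=V\oplus R(d)$ as $GL(V)$-representations, and that $a$ is induced by the linear projection forgetting the framing vector $v\in V$. Hence $a\colon \X^f(d)\to\X(d)$ is the projection from the total space of the tautological rank $d$ vector bundle $\mathcal{V}$ on $\X(d)$ associated to the standard representation $V$ of $GL(V)$. In particular $a$ is flat and affine, so $a^{\ast}$ needs no derivation and $a_{\ast}$ is exact, and the projection formula gives $a_{\ast}a^{\ast}\mathcal{F}\cong \mathcal{F}\otimes a_{\ast}\OO_{\X^f(d)}$. Since the total space of $\mathcal{V}$ has $a_{\ast}\OO_{\X^f(d)}=\operatorname{Sym}^{\bullet}(\mathcal{V}^{\vee})=\bigoplus_{n\geq 0}\operatorname{Sym}^n(\mathcal{V}^{\vee})$, adjunction yields, for $\mathcal{E},\mathcal{F}\in D^b(\X(d))$,
\[
\Hom_{\X^f(d)}(a^{\ast}\mathcal{E},a^{\ast}\mathcal{F})\cong \bigoplus_{n\geq 0}\Hom_{\X(d)}\!\left(\mathcal{E},\mathcal{F}\otimes\operatorname{Sym}^n(\mathcal{V}^{\vee})\right).
\]

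Next I would carry out the weight bookkeeping with respect to $1_d$. The center $z\cdot\mathrm{Id}$ scales $V$ by $z$, so $\mathcal{V}$ has $1_d$-weight $+1$ and $\operatorname{Sym}^n(\mathcal{V}^{\vee})\in D^b(\X(d))_{-n}$. Because $1_d$ acts trivially on $R(d)$, the decomposition $D^b(\X(d))=\bigoplus_w D^b(\X(d))_w$ from Subsection \ref{ss:Ddelta} is orthogonal: every morphism is $1_d$-equivariant, hence morphisms between objects of distinct $1_d$-weights vanish. Thus, if $\mathcal{E}$ has weight $w_1$ and $\mathcal{F}$ has weight $w_2$, the $n$-th summand above can be nonzero only when $w_1=w_2-n$, i.e. $n=w_2-w_1$. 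Observe that at most one value of $n$ contributes, which also sidesteps any convergence issue arising from the infinite direct sum.

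Both claims then follow at once. For full faithfulness, take $\mathcal{E},\mathcal{F}\in D^b(\X(d))_w$; then $w_1=w_2$ forces $n=0$, and the sole surviving term is $\Hom_{\X(d)}(\mathcal{E},\mathcal{F}\otimes\operatorname{Sym}^0(\mathcal{V}^{\vee}))=\Hom_{\X(d)}(\mathcal{E},\mathcal{F})$, the isomorphism being the natural unit map, so $a^{\ast}$ is fully faithful. For the vanishing, take $\mathcal{E}_i$ of weight $w_i$ with $w_1>w_2$; a nonzero summand would require $n=w_2-w_1<0$, impossible for $n\geq 0$, so $\Hom(a^{\ast}\mathcal{E}_1,a^{\ast}\mathcal{E}_2)=0$. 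The only genuinely delicate point is the weight computation, namely correctly pinning down that $\operatorname{Sym}^n(\mathcal{V}^{\vee})$ lies in weight $-n$ and that the $1_d$-decomposition is orthogonal; but this becomes routine once the total-space description of $a$ is in place.
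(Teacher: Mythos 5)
Your proposal is correct and follows essentially the same route as the paper's proof: adjunction plus the projection formula, the splitting $a_{\ast}\mathcal{O}_{\X^f(d)}=\mathrm{Sym}^{\bullet}(\mathcal{V}^{\vee})$, and the observation that $1_d$ acts on $\mathrm{Sym}^{>0}(\mathcal{V}^{\vee})$ with strictly negative weights so that only the $n=0$ summand can contribute when $w_1\geq w_2$. Your explicit term-by-term weight bookkeeping (and the remark that at most one symmetric power contributes, avoiding any issue with the infinite direct sum) is just a slightly more detailed write-up of the same argument.
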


\begin{proof}
    There is a push-forward 
    functor $a_{\ast} \colon D^b\left(\X^f(d)\right)\to D_{\text{qcoh}}(\X(d))$. 
    We take
    $\mathcal{E}_i \in D^b(\mathcal{X}(d))_{w_i}$
    for $i=1, 2$ and $w_1 \geq w_2$.
    Using adjunction and the projection formula, we have that
        \[\Hom_{\X^f(d)}(a^*\mathcal{E}_1, a^*\mathcal{E}_2)=\Hom_{\X(d)}(
        \mathcal{E}_1, a_{\ast}a^{\ast}\mathcal{E}_2)=\Hom_{\X(d)}\left(\mathcal{E}_1, 
        \mathcal{E}_2 \otimes a_{\ast}\mathcal{O}_{\X^f(d)}\right).\]
    The complex $a_*\mathcal{O}_{\X^f(d)}$ splits as 
    \[a_*\mathcal{O}_{\X^f(d)}=
    \mathrm{Sym}^{\bullet}(\mathcal{V}^{\vee})=
    \mathcal{O}_{\X(d)}\oplus B, \] where
    $\mathcal{V} \to \mathcal{X}(d)$
    is the vector bundle associated with
    the $GL(V)$-representation $V$, 
    and 
    $B:=\mathrm{Sym}^{>0}(\mathcal{V}^{\vee})$ is a quasi-coherent sheaf on $\X(d)$ on which 
    the diagonal 
    cocharacter $1_d$ of $T(d)$ acts with strictly negative weights. Then 
    \[\Hom_{\X(d)}
    \left(\mathcal{E}_1, 
        \mathcal{E}_2 \otimes a_{\ast}\mathcal{O}_{\X^f(d)}\right)=
        \Hom_{\X(d)}\left(\mathcal{E}_1, \mathcal{E}_2\right),\] and thus the 
        both conclusions follow.
\end{proof}

Recall the subcategory 
$\mathbb{E}(d; \delta) \subset D^b\left(\mathcal{X}^f(d)\right)$
defined in (\ref{subcat:D}). We have the following corollary: 

\begin{cor}\label{cor1}
    Let $\mu\in\mathbb{R}$ and let $\delta:=d\mu\tau_d\in M(d)_{\mathbb{R}}$.
There is a semiorthogonal decomposition 
\[\mathbb{E}(d; \delta)=\Big\langle a^*\left(\boxtimes_{i=1}^k \mathbb{M}(d_i)_{w_i}\right)\Big\rangle,\] where the right hand side consists of all tuples $A=(d_i, w_i)_{i=1}^k$ with $\sum_{i=1}^k d_i=d$ such that its associated $A'=(d_i, v_i)_{i=1}^k$ satisfies \begin{equation}\label{i1}
    -\mu\leq \frac{v_1}{d_1}<\cdots<\frac{v_k}{d_k}\leq 1-\mu. 
   \end{equation} Further, there exist equivalences $\boxtimes_{i=1}^k \mathbb{M}(d_i)_{w_i}
   \stackrel{\sim}{\to} a^*\left(\boxtimes_{i=1}^k \mathbb{M}(d_i)_{w_i}\right)$.
\end{cor}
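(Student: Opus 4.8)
The plan is to transport the semiorthogonal decomposition of $\mathbb{D}(d;\delta)$ from Proposition~\ref{prop1} across the pullback functor $a^{\ast}$, using Proposition~\ref{fullyfaithful} to control the behaviour of $a^{\ast}$ on the weight-graded pieces $D^b(\mathcal{X}(d))_w$. The only genuinely new observation needed is that pulling back along $a$ converts the \emph{orthogonal} decomposition of $\mathbb{D}(d;\delta)$ into diagonal-weight eigenpieces into a merely \emph{semiorthogonal} one, because $a_{\ast}\mathcal{O}_{\X^f(d)}=\mathcal{O}_{\X(d)}\oplus B$ with $B$ of strictly negative $1_d$-weight, so only one Hom direction between distinct weights survives.

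First I would dispose of generation. The remark following the definition of $\mathbb{E}(d;\delta)$ in Subsection~\ref{ss2} records that $a^{\ast}$ restricts to a functor $\mathbb{D}(d;\delta)\to\mathbb{E}(d;\delta)$ whose essential image generates $\mathbb{E}(d;\delta)$; combined with the generation statement in Proposition~\ref{prop1}, the subcategories $a^{\ast}\!\left(\boxtimes_{i=1}^k \mathbb{M}(d_i)_{w_i}\right)$ generate $\mathbb{E}(d;\delta)$. Next, for the equivalences claimed in the ``Further'' part, I would observe that each summand $\boxtimes_{i=1}^k \mathbb{M}(d_i)_{w_i}$ lies in the single graded piece $D^b(\mathcal{X}(d))_w$ with $w=\sum_{i=1}^k w_i$, since the categorical Hall product $p_{\lambda\ast}q_\lambda^{\ast}$ preserves the weight with respect to the central diagonal cocharacter $1_d$ (the restriction of $1_d$ to the Levi $\times_{i}GL(d_i)$ is $(1_{d_1},\ldots,1_{d_k})$, so weights add). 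By the first part of Proposition~\ref{fullyfaithful}, $a^{\ast}$ is fully faithful on $D^b(\mathcal{X}(d))_w$, hence restricts to an equivalence onto its essential image $a^{\ast}\!\left(\boxtimes_{i=1}^k \mathbb{M}(d_i)_{w_i}\right)$.

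For semiorthogonality, take two summands with associated tuples $A$ and $B$ and $(A,B)\in O$, write $w:=w_A$ and $w':=w_B$, and pick $\mathcal{E}_A$, $\mathcal{E}_B$ in the respective summands, so that $\mathcal{E}_A\in D^b(\mathcal{X}(d))_w$ and $\mathcal{E}_B\in D^b(\mathcal{X}(d))_{w'}$. If $w>w'$, the second part of Proposition~\ref{fullyfaithful} gives $\Hom(a^{\ast}\mathcal{E}_A, a^{\ast}\mathcal{E}_B)=0$ directly, and this is exactly the surviving direction recorded in the definition of $O_{w,w'}$ in Subsection~\ref{comppartitions}. If $w=w'$, then full faithfulness of $a^{\ast}$ on $D^b(\mathcal{X}(d))_w$ identifies $\Hom(a^{\ast}\mathcal{E}_A, a^{\ast}\mathcal{E}_B)$ with $\Hom(\mathcal{E}_A,\mathcal{E}_B)$, which vanishes by the semiorthogonality already established in Proposition~\ref{prop1} for pairs in $O_{w,w}$.

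I do not expect a serious obstacle here, since the statement is essentially a formal consequence of the two preceding propositions; the only thing to be careful about is bookkeeping of conventions, namely that the ordering encoded by $O$ is compatible with the single Hom direction that persists under $a^{\ast}$. This compatibility splits into the two cases above: for $w>w'$ it is built into the definition of $O_{w,w'}$ and matched to Proposition~\ref{fullyfaithful}, and for $w=w'$ it coincides with the ordering used in Proposition~\ref{prop1}. Thus no combinatorial input beyond Propositions~\ref{prop1} and~\ref{fullyfaithful} is needed, and the main conceptual content is simply the remark that framing breaks the orthogonality of distinct-weight summands down to semiorthogonality.
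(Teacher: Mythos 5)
Your proposal is correct and follows essentially the same route as the paper: the paper first uses Proposition~\ref{fullyfaithful} to split $\mathbb{E}(d;\delta)$ into the weight pieces $a^{\ast}\mathbb{D}(d;\delta)_w$ ordered by the diagonal weight $w$, with $a^{\ast}$ an equivalence on each piece, and then refines each $\mathbb{D}(d;\delta)_w$ by Proposition~\ref{prop1} — exactly the two cases ($w>w'$ via the second part of Proposition~\ref{fullyfaithful}, $w=w'$ via full faithfulness plus Proposition~\ref{prop1}) that you verify directly.
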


\begin{proof}
For $w \in \mathbb{Z}$, let 
$\mathbb{D}(d; \delta)_{w} \subset D^b(\mathcal{X}(d))_w$
be the subcategory defined similarly to $\mathbb{D}(d; \delta)$
for a fixed $w$. 
Note that $\mathbb{D}(d; \delta)$
is a direct sum of $\mathbb{D}(d; \delta)_{w}$
for $w \in [-d\mu, d-d\mu] \cap \mathbb{Z}$. 
From Proposition~\ref{fullyfaithful},
we have the semiorthogonal decomposition 
\begin{align}\label{sod:E}
    \mathbb{E}(d; \delta)=\langle a^{\ast}\mathbb{D}(d; \delta)_{p}, 
    a^{\ast}\mathbb{D}(d; \delta)_{p+1}, \cdots, a^{\ast}\mathbb{D}(d; \delta)_q \rangle
\end{align}
where $[-d\mu, d-d\mu] \cap \mathbb{Z}=\{p, p+1, \ldots, q\}$, 
and $a^{\ast} \colon 
\mathbb{D}(d; \delta)_w \stackrel{\sim}{\to} a^{\ast}\mathbb{D}(d; \delta)_w$
is an equivalence. 
By Proposition~\ref{prop1}, we have the semiorthogonal decomposition 
\begin{align*}
    \mathbb{D}(d; \delta)_w=\left\langle 
    \boxtimes_{i=1}^k \mathbb{M}(d_i)_{w_i} \right\rangle
\end{align*}
where the partitions $(d_i, w_i)_{i=1}^k$
of $(d, w)$ satisfy (\ref{i0}). 
Therefore from (\ref{sod:E}), we obtain the corollary.
\end{proof}

\subsection{Proof of Theorem~\ref{MacMahonthm}}
In this subsection, we give a proof of Theorem~\ref{MacMahonthm}. 
We first prepare some notation. 
We denote by $Q'$
the quiver obtained by adding 
an edge $\overline{e}$
to the framed quiver $Q^f$, 
where $\overline{e}$ is the opposite 
edge to $e$. 
Let $V$ be a $d$-dimensional vector space.
The moduli stack of representations of $Q'$ with dimension 
vector $(1, d)$ is given by 
\begin{align}\label{def:Xprime}
\X'(d) :=R'(d)/GL(d)=(V \oplus V^{\vee} \oplus \mathfrak{gl}(V)^{\oplus 3})/GL(V). 
\end{align}
Note that $R'(d)$ is a symmetric $GL(d)$-representation. For a cocharacter $\lambda \colon \mathbb{C}^{\ast} \to T(d)$, 
we set 
\begin{align}\label{def:vlambda}
    \eta_{\lambda}:=\langle \lambda, 
    (\mathcal{X}'(d)^{\vee})^{\lambda>0} \rangle \in \mathbb{Z}.
\end{align}
We denote by $\nabla \subset M_{\mathbb{R}}$
the polytope defined by 
\begin{align*}
    \nabla :=\left\{\chi \in M_{\mathbb{R}} \relmiddle|
    -\frac{1}{2} \eta_{\lambda} \leq 
    \langle \lambda, \chi \rangle \leq \frac{1}{2} \eta_{\lambda} \mbox{ for all }
    \lambda \colon \mathbb{C}^{\ast} \to T(d) \right\}. 
\end{align*}
\begin{lemma}\label{lem:nabla}
For a dominant character $\chi \in M^+$, 
we have $\chi+\rho+\delta \in \mathbf{V}(d)$
if and only if for any $T(d)$-weight $\chi'$
of $\Gamma_{GL(d)}(\chi)$, we have 
\begin{align*}
\chi'+\delta -\frac{1}{2}
d\tau_d \in \nabla. 
\end{align*}
\end{lemma}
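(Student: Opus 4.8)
The plan is to first collapse the ``all weights'' condition on the right-hand side to a single membership condition on $\chi$, and then to deduce the equivalence from an exact Minkowski-sum identity relating the three polytopes $\mathbf{V}(d)$, $\nabla$, and the zonotope
\[Z:=\sum_{1\leq i<j\leq d}\left[-\tfrac{1}{2}(\beta_i-\beta_j),\tfrac{1}{2}(\beta_i-\beta_j)\right]=\mathrm{conv}(\mathfrak{S}_d\cdot\rho).\]

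\emph{Reduction of the right-hand side.} I would first observe that $\nabla$ is $\mathfrak{S}_d$-invariant: the integer $\eta_\lambda$ depends only on the $\mathfrak{S}_d$-orbit of $\lambda$, since the weight multiset of $R'(d)$ and the roots of $\mathfrak{g}$ are $\mathfrak{S}_d$-stable, so $\eta_{w\lambda}=\eta_\lambda$. The shift $\delta-\tfrac{1}{2}d\tau_d=(\mu-\tfrac{1}{2})\sum_{k}\beta_k$ is central, hence also $\mathfrak{S}_d$-invariant. The weights of $\Gamma_{GL(d)}(\chi)$ lie in $\mathrm{conv}(\mathfrak{S}_d\cdot\chi)$, whose vertices $\mathfrak{S}_d\cdot\chi$ are themselves weights. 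Therefore, for the $\mathfrak{S}_d$-invariant convex set $K:=\nabla-\delta+\tfrac{1}{2}d\tau_d$, the condition ``$\chi'\in K$ for every weight $\chi'$ of $\Gamma_{GL(d)}(\chi)$'' is equivalent to $\mathfrak{S}_d\cdot\chi\subset K$, hence by $\mathfrak{S}_d$-invariance of $K$ to $\chi\in K$, i.e. to $\chi+\delta-\tfrac{1}{2}d\tau_d\in\nabla$. This reduces the right-hand side of the lemma to a single condition on $\chi$.

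\emph{The polytope identity.} Computing $\eta_\lambda$ from $R'(d)=V\oplus V^\vee\oplus\mathfrak{gl}(V)^{\oplus 3}$ (self-dual and symmetric) gives $\eta_\lambda=\sum_i|\lambda_i|+2\sum_{i<j}|\lambda_i-\lambda_j|$, so that $\tfrac{1}{2}\eta_\lambda$ is the support function of $\nabla=\sum_i[-\tfrac{1}{2}\beta_i,\tfrac{1}{2}\beta_i]+\sum_{i<j}[-(\beta_i-\beta_j),\beta_i-\beta_j]$. On the other hand, rewriting the defining Minkowski sums yields
\[\mathbf{V}(d)=\tfrac{1}{2}d\tau_d+\sum_{k}\left[-\tfrac{1}{2}\beta_k,\tfrac{1}{2}\beta_k\right]+\tfrac{3}{2}\sum_{i<j}\left[-(\beta_i-\beta_j),\beta_i-\beta_j\right],\]
and comparing the two shows the exact identity $\mathbf{V}(d)-\tfrac{1}{2}d\tau_d=\nabla+Z$. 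Setting $\eta:=\chi+\delta-\tfrac{1}{2}d\tau_d$, which is dominant because it differs from $\chi$ by a central weight, the left-hand side $\chi+\rho+\delta\in\mathbf{V}(d)$ becomes $\eta+\rho\in\nabla+Z$, and the reduced right-hand side becomes $\eta\in\nabla$.

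\emph{The two directions.} Since $\rho=\tfrac{1}{2}\sum_{j<i}(\beta_i-\beta_j)$ is a vertex of $Z$, the implication $\eta\in\nabla\Rightarrow\eta+\rho\in\nabla+Z$ is immediate, giving (right-hand side)$\Rightarrow$(left-hand side) with no further hypothesis. For the converse I would use dominance of $\eta$ together with the rearrangement inequality: for any cocharacter $\lambda$ with dominant representative $\lambda^+\in\mathfrak{S}_d\cdot\lambda$ one has $\langle\lambda,\eta\rangle\leq\langle\lambda^+,\eta\rangle$, while $\eta_{\lambda^+}=\eta_\lambda$ and $h_Z(\lambda^+)=\tfrac{1}{2}\sum_{i<j}|\lambda^+_i-\lambda^+_j|=\langle\lambda^+,\rho\rangle$ for dominant $\lambda^+$. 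Evaluating the hypothesis $\eta+\rho\in\nabla+Z$ at $\lambda^+$ then gives $\langle\lambda^+,\eta\rangle\leq\tfrac{1}{2}\eta_{\lambda^+}+h_Z(\lambda^+)-\langle\lambda^+,\rho\rangle=\tfrac{1}{2}\eta_\lambda$, whence $\langle\lambda,\eta\rangle\leq\tfrac{1}{2}\eta_\lambda$ for all $\lambda$, i.e. $\eta\in\nabla$. The main obstacle is exactly this left-to-right implication: unlike its converse it genuinely requires the dominance of $\chi$, and the mechanism — reducing an arbitrary testing cocharacter to its dominant representative, where the defect $h_Z(\lambda)-\langle\lambda,\rho\rangle$ vanishes — is the crux. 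The remaining steps, namely verifying $\mathbf{V}(d)-\tfrac{1}{2}d\tau_d=\nabla+Z$ and the support-function computation of $\eta_\lambda$, are elementary bookkeeping once all weights are written out explicitly.
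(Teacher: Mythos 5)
Your proof is correct, but it reaches the statement by a different (self-contained) route than the paper. The paper's proof is three lines: it performs the same centering you do, rewriting $\chi+\rho+\delta \in \mathbf{V}(d)$ as $\chi+\rho+\delta-\frac{1}{2}d\tau_d$ lying in half the convex hull of the $T(d)$-weights of $\wedge^{\bullet}R'(d)$, and then invokes \cite[Lemma~2.9]{hls}, which is precisely the equivalence between such a $\rho$-shifted polytope condition on a dominant $\chi$ and the condition that all weights of $\Gamma_{GL(d)}(\chi)$ lie in $\nabla$. What you have done is re-prove the needed special case of that cited lemma from scratch: the reduction of the ``all weights'' condition to the single condition on $\chi$ via $\mathfrak{S}_d$-invariance and convexity (valid since the extreme weights $\mathfrak{S}_d\cdot\chi$ are weights of $\Gamma_{GL(d)}(\chi)$ and all weights lie in their convex hull); the identification of $\nabla$ as the zonotope whose support function is $\frac{1}{2}\eta_\lambda$; the Minkowski identity $\mathbf{V}(d)-\frac{1}{2}d\tau_d=\nabla+Z$ with $Z$ the permutohedron of $\rho$; and the two implications, the trivial one from $\rho\in Z$ and the nontrivial one from the rearrangement inequality at the dominant representative $\lambda^+$, where the defect $h_Z(\lambda^+)-\langle\lambda^+,\rho\rangle$ vanishes. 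These steps all check out: your formula $\eta_\lambda=\sum_i\lvert\lambda_i\rvert+2\sum_{i<j}\lvert\lambda_i-\lambda_j\rvert$ correctly incorporates the $-\mathfrak{g}^{\lambda>0}$ correction built into the paper's definition of $\eta_\lambda$, and your one-sided conclusion $\langle\lambda,\eta\rangle\leq\frac{1}{2}\eta_\lambda$ for all $\lambda$ does yield $\eta\in\nabla$, since applying it to $-\lambda$ together with $\eta_{-\lambda}=\eta_\lambda$ gives the lower bound. The trade-off is clear: the paper's argument is short but opaque, outsourcing all the combinatorics to the citation, whereas your argument is longer but makes transparent exactly where dominance of $\chi$ enters (only in the forward implication, through the rearrangement step) and why the $\rho$-shift is the natural one.
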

\begin{proof}
The condition $\chi+\rho+\delta \in \mathbf{V}(d)$
is equivalent to 
\begin{align*}
    \chi+\rho+\delta-\frac{1}{2}d\tau_d
    \in \frac{3}{2} \mathrm{sum}[0, \beta_i-\beta_j]
    +\frac{1}{2}\mathrm{sum}[-\beta_k, \beta_k]
\end{align*}
where the sum is after all $1\leq i, j, k \leq d$. 
The right hand side is half of the convex hull of the
$T(d)$-weights of $\wedge^{\bullet}(R'(d))$, 
so the lemma follows from~\cite[Lemma~2.9]{hls}. 
\end{proof}

The above lemma is used 
to show the following proposition, 
which claims that 
the category $\mathbb{E}(d; \delta)$
defined in (\ref{subcat:D}) gives a window 
subcategory for 
non-commutative Hilbert schemes: 
\begin{prop}\label{magic}
    Denote by $\iota \colon \mathrm{NHilb}(d)= \X^f(d)^{\text{ss}}\hookrightarrow 
    \X^f(d)$ the open immersion. 
    Let 
    $\mu \in \mathbb{R}$
    with $2\mu k \notin \mathbb{Z}$
    for $1\leq k \leq d$
    and let $\delta:=\mu d\tau_d$.
    Then the following composition functor is 
    an equivalence: 
    \begin{align}\label{iota:ast}
    \iota^{\ast}\colon 
    \mathbb{E}(d; \delta)
    \hookrightarrow D^b(\mathcal{X}^f(d)) \stackrel{\iota^{\ast}}{\twoheadrightarrow} 
    D^b(\mathrm{NHilb}(d)).
    \end{align}
\end{prop}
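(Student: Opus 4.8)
The plan is to realize $\mathbb{E}(d;\delta)$ as a \emph{window subcategory} for the GIT quotient $\mathrm{NHilb}(d)=\X^f(d)^{\mathrm{ss}}$ and then to invoke the window theorem for derived categories of GIT quotients of Halpern-Leistner \cite{halp} and its refinement for (quasi-)symmetric representations of Halpern-Leistner--Sam \cite{hls}; Lemma~\ref{lem:nabla} is exactly the combinatorial bridge between the two descriptions. First I would describe the Kempf--Ness stratification of the unstable locus $\X^f(d)\setminus\mathrm{NHilb}(d)$ for the linearization $\det$: a point $(v,X,Y,Z)$ is unstable precisely when $v$ fails to generate $V$, the strata are indexed by the dimension $e<d$ of the generated subspace, and a maximally destabilizing cocharacter $\lambda$ is an antidominant one-parameter subgroup contracting $V$ onto a complement of the generated part. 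By \cite[Proposition 3.2]{P} it suffices to treat the two-step destabilizing cocharacters, those with associated partition $(\sigma,d-\sigma)$, $1\le\sigma\le d-1$. For each such $\lambda$ the window theorem prescribes a half-open interval of length $\eta^f_\lambda:=\langle\lambda,R^f(d)^{\lambda>0}\rangle-\langle\lambda,\mathfrak g^{\lambda>0}\rangle$ (the $\lambda$-weight on the determinant of the conormal bundle to the stratum), and asserts that the objects whose $\lambda$-weights along the fixed locus lie in this interval form a subcategory on which $\iota^{\ast}$ is an equivalence onto $D^b(\mathrm{NHilb}(d))$.

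The essential step is then to identify this grade-restriction window with $\mathbb{E}(d;\delta)$. By Lemma~\ref{lem:nabla} the generator $\mathcal O_{\X^f(d)}\otimes\Gamma_{GL(d)}(\chi)$ lies in $\mathbb{E}(d;\delta)$ if and only if every $T(d)$-weight $\chi'$ of $\Gamma_{GL(d)}(\chi)$ satisfies $-\tfrac12\eta_\lambda\le\langle\lambda,\chi'+\delta-\tfrac12 d\tau_d\rangle\le\tfrac12\eta_\lambda$ for all $\lambda$, which is a \emph{symmetric} grade-restriction condition attached to the symmetrized representation $R'(d)=V\oplus V^\vee\oplus\mathfrak{gl}(V)^{\oplus 3}$. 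The reason for passing to $R'(d)$ is that its weight polytope is centrally symmetric, so the clean magic-window formalism of \cite{hls} applies; the asymmetry of the genuine framed problem (in which only $V$, not $V^\vee$, occurs) is absorbed by the shift $\tfrac12 d\tau_d$. Concretely, I would use the identities $\eta_\lambda=\eta^f_\lambda+\langle\lambda,(V^\vee)^{\lambda>0}\rangle$ and $\langle\lambda,\tfrac12 d\tau_d\rangle=\tfrac12\big(\langle\lambda,V^{\lambda>0}\rangle-\langle\lambda,(V^\vee)^{\lambda>0}\rangle\big)$ to check that the centered inequality coming from $\nabla$ is equivalent, for each destabilizing $\lambda$, to the half-open window of width $\eta^f_\lambda$ recentred by $\langle\lambda,\delta\rangle$ that is required by \cite{halp}.

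I expect the bookkeeping of this matching, together with the precise role of the genericity hypothesis, to be the main obstacle. The shift $\langle\lambda,\delta\rangle=\mu\langle\lambda,d\tau_d\rangle$ moves the symmetric window of $\nabla$ into the correct asymmetric position, and the hypothesis $2\mu k\notin\mathbb Z$ for $1\le k\le d$ is exactly what guarantees that, for every destabilizing cocharacter $\lambda$, no integral weight $\chi'$ of $\Gamma_{GL(d)}(\chi)$ can land on a boundary face of the resulting window. Hence the closed/half-open distinction between the two formalisms is immaterial, each stratum admits a unique weight lift, and both full faithfulness and generation hold at once. Feeding the dictionary of Lemma~\ref{lem:nabla} (itself a consequence of \cite[Lemma 2.9]{hls}) into the window theorem \cite[Theorem 3.2]{hls}, equivalently \cite[Theorem 2.10]{halp}, then yields that $\iota^{\ast}$ restricts to the asserted equivalence $\mathbb{E}(d;\delta)\xrightarrow{\sim}D^b(\mathrm{NHilb}(d))$. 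The analogous statements in the graded and $T$-equivariant settings will follow by the same argument, since the extra $\mathbb C^{\ast}$- and $T$-actions commute with $GL(d)$, preserve every stratum, and leave all the weight conditions unchanged.
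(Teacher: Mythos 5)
Your first half — the Kempf--Ness stratification of $\X^f(d)$, the reduction to two-step destabilizing cocharacters, the identity between the framed and symmetrized window widths for such cocharacters, the genericity hypothesis $2k\mu\notin\mathbb{Z}$ removing the half-open/closed discrepancy, and Lemma~\ref{lem:nabla} as the dictionary — is exactly the paper's argument, and it correctly yields that $\mathbb{E}(d;\delta)$ is contained in the Halpern-Leistner window $\mathbb{G}_{\eta}$ of \cite[Theorem 2.10]{halp}, hence that $\iota^{\ast}$ is fully faithful on $\mathbb{E}(d;\delta)$. (Two small points: the reduction to two-step cocharacters is Lemma~\ref{lem:length2}, proved by a direct maximization; \cite[Proposition 3.2]{P}, which concerns $r$-invariants of weights, is not the right reference.)

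The genuine gap is your claim that ``both full faithfulness and generation hold at once.'' The two theorems you treat as interchangeable are not. \cite[Theorem 2.10]{halp} does apply to $\X^f(d)$, but its window $\mathbb{G}_{\eta}$ is cut out by weight bounds only at the finitely many strata cocharacters of the form (\ref{lambdai:form}), whereas $\mathbb{E}(d;\delta)$ is cut out, via Lemma~\ref{lem:nabla}, by the polytope $\nabla$, i.e.\ by bounds for \emph{all} cocharacters of $T(d)$; a priori this is a strictly smaller subcategory, so Halpern-Leistner gives full faithfulness of $\iota^{\ast}|_{\mathbb{E}(d;\delta)}$ but says nothing about essential surjectivity. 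Conversely, \cite[Theorem 3.2]{hls}, which does identify a polytope-defined window with $D^b$ of the quotient, requires a quasi-symmetric representation, and $R^f(d)=V\oplus\mathfrak{gl}(V)^{\oplus 3}$ is not quasi-symmetric (the weights $\beta_i$ of $V$ have no opposites); the shift by $\tfrac{1}{2}d\tau_d$ in Lemma~\ref{lem:nabla} repairs the combinatorics of the polytope, but it does not make that theorem applicable to $\X^f(d)$. The missing idea is to use the doubled representation $R'(d)$ \emph{geometrically}, not just to normalize $\nabla$: semistability in $\X'(d)$ imposes no condition on $v'$, so $b\colon \X'(d)^{\mathrm{ss}}\to\X^f(d)^{\mathrm{ss}}$ is an affine bundle with fiber $V^{\vee}$; the magic window theorem applies to the symmetric stack $\X'(d)$ and gives the equivalence (\ref{magicdouble}); then any $E\in D^b(\mathrm{NHilb}(d))$ satisfies $E\cong\iota^{\ast}b^{\ast}E$ with $b^{\ast}E\cong j^{\ast}F$ for some $F\in\mathbb{F}(d;\delta)$, and restricting $F$ along the zero section $\X^f(d)\hookrightarrow\X'(d)$ produces an object of $\mathbb{E}(d;\delta)$ whose image under $\iota^{\ast}$ is $E$. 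Without this step (or a substitute, e.g.\ a direct proof that $\mathbb{G}_{\eta}=\mathbb{E}(d;\delta)$, which is not obvious and in the paper is only a consequence of the proposition), your argument establishes only half of the statement.
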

\begin{proof}
The proposition 
is proved using the magic 
window Theorem~\cite{hls}
by reducing the problem to the case of 
symmetric representations (see
the proof of~\cite[Proposition~2.6]{KoTo}
for a similar argument). 
Fix an isomorphism between the weight and coweight spaces $M_\mathbb{R}\cong N_\mathbb{R}$ and consider the norm on $M_\mathbb{R}\cong N_\mathbb{R}$ defined by
\begin{equation}\label{definition:norm}
    \left|\sum_{i=1}^d c_i\beta_i\right|:=\sqrt{\sum_{i=1}^d c_i^2}.
\end{equation}
Let 
\begin{align}\label{KN:S}
\X^f(d)=\bigsqcup_{i\in I} \mathcal{S}_i \sqcup \X^f(d)^{\text{ss}}
\end{align}
be the Kempf-Ness stratification of 
$\X^f(d)$
with respect to the determinant 
character $\det \colon GL(d) \to \mathbb{C}^{\ast}$
and the above choice of norm on $N_{\mathbb{R}}$
(see~\cite[Section~2.1]{halp}). 
Let $\lambda_i \colon \mathbb{C}^{\ast} \to T(d)$
be the cocharacter associated with $\mathcal{S}_i$. 
In~\cite[Lemma~5.1.9]{T}, 
it is proved that
each 
$\lambda_i$
is of the form 
\begin{align}\label{lambdai:form}
\lambda_i(t)=
    (\overbrace{t^{-1}, \ldots, t^{-1}}^k, 1, \ldots, 1)
    \end{align}
    for some $1\leq k \leq d$. 

For each $i\in I$, consider the fixed stack $\mathcal{Z}_i:=\mathcal{S}_i^{\lambda_i}$ and the integer
\begin{align*}
    \eta_i :=\langle \lambda_i, (\mathcal{X}^f(d)^{\vee})^{\lambda_i>0} \rangle=
    -\langle \lambda_i, \mathcal{X}^f(d)^{\lambda_i<0} \rangle=k+3k(d-k).  
\end{align*}
Let $\mathbb{G}_{\eta}\subset D^b\left(\X^f(d)\right)$ be the subcategory of complexes $\mathcal{F}$ such that \begin{align}\label{wt:condG}
    \mathrm{wt}_{\lambda_i}(\mathcal{F}|_{\mathcal{Z}_i})
    +\left\langle \lambda_i, \delta-\frac{1}{2}d\tau_d \right\rangle \subset 
    \left[-\frac{1}{2}\eta_i, \frac{1}{2}\eta_i \right)
\end{align}
for all $i\in I$.
By \cite[Theorem 2.10]{halp}, the restriction $\iota^*$ induces an equivalence
\begin{equation}\label{iotaGv}
\iota^* \colon \mathbb{G}_{\eta}\xrightarrow{\sim} D^b(\text{NHilb}(d)).
\end{equation}
On the other hand, 
as $\lambda_i$ is of the form (\ref{lambdai:form}), 
we have that
$\X^f(d)^{\lambda_i<0}=\X'(d)^{\lambda_i<0}$,
where $\X'(d)$
is the stack (\ref{def:Xprime}). 
Therefore we have $\eta_i=\eta_{\lambda_i}$, 
where the latter is defined in (\ref{def:vlambda}). 
Moreover, by the assumption $2k\mu \notin \mathbb{Z}$
for $1\leq k\leq d$
and $\langle \lambda_i, \delta \rangle=-\mu k$, 
$\langle \lambda_i, d\tau_d\rangle=-k$, 
the condition (\ref{wt:condG}) is equivalent to 
\begin{align}\notag
    \mathrm{wt}_{\lambda_i}(\mathcal{F}|_{\mathcal{Z}_i})
    +\left\langle \lambda_i, \delta-\frac{1}{2}d\tau_d \right\rangle \subset 
    \left[-\frac{1}{2}\eta_i, \frac{1}{2}\eta_i \right]. 
\end{align}
By Lemma~\ref{lem:nabla}, 
it follows that 
$\mathbb{E}(d; \delta)\subset \mathbb{G}_{\eta}$.
It suffices to show that 
the composition functor (\ref{iota:ast})
is essentially surjective. 

Consider the open subset
\begin{align*}
j \colon \X'(d)^{\text{ss}}
\hookrightarrow \mathcal{X}'(d)
\end{align*} of stable points in $\X'(d)$ with respect to the linearization $\det \colon GL(d)\to \mathbb{C}^*$. The stack $\X'(d)^{\text{ss}}$ is a smooth quasi-projective variety, 
which parametrizes tuples
\begin{align*}
(v, v', X, Y, Z), \ X, Y, Z \in \mathrm{End}(V), \ v \in V, \ v'  \in V^{\vee}   
\end{align*}
where $V$ is generated by $v$ under the action of $(X, Y, Z)$. 
In particular,
there is no constraint on $v'$, and so 
the projection $\mathcal{X}'(d) \to \mathcal{X}^f(d)$
restricts to 
the morphism 
$b \colon \mathcal{X}'(d)^{\text{ss}} \to \mathcal{X}^f(d)^{\text{ss}}$
which is an affine space bundle with fiber $V^{\vee}$. 
Let $\iota$ be the zero-section of the above affine space bundle. 
Then the pull-back 
\begin{align}\label{iota:pull}
    \iota^{\ast} \colon D^b(\mathcal{X}'(d)) \to D^b(\mathcal{X}^f(d)) 
\end{align}
is essentially surjective, as any object 
$E \in D^b(\mathcal{X}^f(d))$
is isomorphic to $\iota^{\ast}b^{\ast}E$. 

Similarly to (\ref{subcat:D}), 
let $\mathbb{F}(d; \delta) \subset D^b(\X'(d))$
be the subcategory generated by $\mathcal{O}_{\X'(d)} \otimes \Gamma_{GL(d)}(\chi)$
for a dominant weight $\chi$ satisfying $\chi+\rho+\delta \in \textbf{V}(d)$. 
As $Q'$ is symmetric,
by \cite[Theorem 3.2]{hls} there exists an equivalence via the restriction map:
\begin{equation}\label{magicdouble}
j^{\ast}\colon \mathbb{F}(d; \delta)\xrightarrow{\sim} D^b\left(\X'(d)^{\text{ss}}\right).
\end{equation}
The claim that (\ref{iota:ast}) is essentially 
surjective follows from the equivalence \eqref{magicdouble} and 
the essential surjectivity of (\ref{iota:pull}). 
\end{proof}

The last ingredient needed to prove Theorem \ref{MacMahonthm} is a Thom-Sebastiani theorem for quasi-BPS categories. For $d\in \mathbb{N}$, denote by $\X(d)_0$ the zero locus of $\mathrm{Tr}\,W\colon \X(d)\to\mathbb{C}$.

\begin{prop}\label{ThomSebastiani}
    Let $(d_j)_{j=1}^s\in \mathbb{N}^s$ and let $(w_j)_{j=1}^s\in\mathbb{Z}^s$.
    Let $\bullet\in\{\emptyset, \mathrm{gr}\}$.
    Then the inclusion $\iota\colon\times_{j=1}^s \X(d_j)_0\hookrightarrow \X(d)_0$ induces an equivalence
    \[\iota_*\colon \boxtimes_{j=1}^s\mathbb{S}^\bullet(d_j)_{w_j}\xrightarrow{\sim} \mathrm{MF}^\bullet\left(\boxtimes_{j=1}^s \mathbb{M}(d_j)_{w_j}, \mathrm{Tr}\,W\right).\]
    The product on the left hand side is the product of dg-categories over $\mathbb{C}(\!(\beta)\!)$ for $\beta$ of homological degree $-2$ in the ungraded case, and is the product of dg-categories over $\mathbb{C}$ in the graded case. 
\end{prop}

\begin{proof}
There is a semiorthogonal decomposition
\begin{equation}\label{SODprop3141}
D^b(\X(d))=\big\langle \boxtimes_{i=1}^k \mathbb{M}(d_i)_{w_i}\big\rangle,
\end{equation}
where the right hand side is after all partitions $(d_i)_{i=1}^k$ of $d$ and all $(w_i)_{i=1}^k\in\mathbb{Z}^k$ such that, for $v_i$ as in \eqref{w:prime}, we have 
\[\frac{v_1}{d_1}<\ldots<\frac{v_k}{d_k},\] and where the product is the product of dg-categories over $\mathbb{C}$, see \cite[Theorem 1.1]{P2}, \cite[Corollary 3.3]{P0}. 
By Proposition \ref{propsodpotential}, there is a semiorthogonal decomposition:
\begin{equation}\label{SODprop3142}
\mathrm{MF}^\bullet(\X(d), \mathrm{Tr}\,W)=\Big\langle \mathrm{MF}^\bullet\left(\boxtimes_{i=1}^k \mathbb{M}(d_i)_{w_i}, \mathrm{Tr}\,W\right)\Big\rangle.
\end{equation}
There is a Thom-Sebastiani equivalence 
    \[\iota_*\colon \boxtimes_{i=1}^k \mathrm{MF}^\bullet(\X(d_i), \mathrm{Tr}\,W_{d_i})\xrightarrow{\sim} \mathrm{MF}^\bullet(\X(d), \mathrm{Tr}\,W),\] 
    where the product on the left hand side is the product of dg-categories over $\mathbb{C}(\!(\beta)\!)$ for $\beta$ of homological degree $-2$ in the ungraded case,
    see \cite[Theorem 4.1.3]{Preygel}, or the product of dg-categories over $\mathbb{C}$ in the graded case, see \cite[Corollary 5.18]{MR3270588} (alternatively in the graded case, one can use the Koszul equivalence \eqref{equiv:Phi}).
    The claim follows using induction on $d$ and comparing the product of the semiorthogonal decompositions \eqref{SODprop3142} for $(d_j)_{j=1}^s$ and the semiorthogonal decomposition obtained using Proposition \ref{propsodpotential} from the product of the semiorthogonal decompositions \eqref{SODprop3141} for $(d_j)_{j=1}^s$.
\end{proof}

We finally give a proof of Theorem~\ref{MacMahonthm}: 

\begin{proof}[Proof of Theorem \ref{MacMahonthm}]
Using Corollary \ref{cor1} and Proposition \ref{magic}, there is a semiorthogonal decomposition
\begin{equation}\label{SODnhilb}
    D^b\left(\text{NHilb}(d)\right) =
\Big\langle \boxtimes_{i=1}^k \mathbb{M}(d_i)_{w_i}\Big\rangle,
\end{equation}
 where the right hand side consists of all tuples
 $A=(d_i, w_i)_{i=1}^k$ with $\sum_{i=1}^k d_i=d$ such that its associated $A'=(d_i, v_i)_{i=1}^k$ satisfies \eqref{i}.
By Proposition \ref{propsodpotential}, there is a semiorthogonal decomposition
\[\mathcal{DT}(d)=\Big\langle \mathrm{MF}\left(\boxtimes_{i=1}^k \mathbb{M}(d_i)_{w_i}, \oplus_{i=1}^k \mathrm{Tr}\,W_{d_i}\right)\Big\rangle.\]
The claim follows from Proposition \ref{ThomSebastiani}.
\end{proof}

\section{K-theoretic equivariant Donaldson-Thomas invariants of \texorpdfstring{$\mathbb{C}^3$}{C3}}\label{Section:KDT}

In this section, we 
prove Theorem~\ref{thm:intro2} and Corollary~\ref{intro:cor1}. 

\subsection{Moduli stacks of zero-dimensional sheaves}\label{subsection:Hallcomm}
Let $V$ be a $d$-dimensional vector space 
and let $\mathfrak{g}=\Hom(V, V)$ be the Lie algebra of $GL(d)$. 
We set 
\begin{align*}
	\mathcal{Y}(d):=\mathfrak{g}^{\oplus 2}/GL(d),
	\end{align*}
where $GL(d)$ acts on $\mathfrak{g}$ by conjugation. 
The stack $\mathcal{Y}(d)$ is the moduli stack of 
representations of a two-loop quiver with dimension vector $d$. 
Let $s$ be the morphism 
\begin{align}\label{mor:s}
s \colon \mathfrak{g}^{\oplus 2} \to \mathfrak{g}, \ (X, Y) \mapsto [X, Y].
\end{align}
The map $s$ induces a map of vector bundles $\partial: \mathfrak{g}^{\vee}\otimes\mathcal{O}_{\mathfrak{g}^{\oplus 2}}\to \mathcal{O}_{\mathfrak{g}^{\oplus 2}}$.
Let $s^{-1}(0)$ be the derived scheme with the dg-ring of regular functions
\begin{align}\label{diff:ds2}
\mathcal{O}_{s^{-1}(0)}=\mathcal{O}_{\mathfrak{g}^{\oplus 2}}\left[\mathfrak{g}^{\vee}\otimes\mathcal{O}_{\mathfrak{g}^{\oplus 2}}[1]; d_s\right],
\end{align}
where the differential $d_s$ is induced by the map $\partial$. Consider the (derived) stack 
\[\mathcal{C}(d):=s^{-1}(0)/GL(d) \hookrightarrow \mathcal{Y}(d).\]
For a smooth variety $X$, we denote by $\mathfrak{C}oh(X, d)$
the derived moduli stack of zero-dimensional sheaves on $X$
with length $d$, 
and by $\mathcal{C}oh(X, d)$ its classical truncation. 
Then 
$\mathcal{C}(d)$
is equivalent to $\mathfrak{C}oh(\mathbb{C}^2, d)$. 
 
For a decomposition $d=d_1+\cdots+d_k$, 
let $\mathcal{C}(d_1, \ldots, d_k)$ be the derived moduli stack 
of filtrations of coherent sheaves on $\mathbb{C}^2$
\begin{align}\label{filt:Q}
0=Q_0 \subset	Q_1 \subset Q_2 \subset \cdots \subset Q_k
	\end{align}
such that each subquotient $Q_i/Q_{i-1}$ is a zero-dimensional 
sheaf on $\mathbb{C}^2$ with length $d_i$. 

The stack $\mathcal{C}(d_1, \ldots, d_k)$ is a quotient stack described as follows. Let $P=GL(d_1,\ldots, d_k)$, let $\mathfrak{p}=\mathfrak{gl}(d_1,\ldots, d_k)$, and let $s_\mathfrak{p}\colon \mathfrak{p}^{\oplus 2}\to \mathfrak{p}$ be the commutator map. The stack $\mathcal{C}(d_1,\ldots, d_k)$ is the moduli stack of filtrations of finite dimensinal representations of $\mathbb{C}[x,y]$ such that the graded pieces have dimensions $d_i$. The moduli stack of such filtrations of finite dimensional representations of the free algebra $\mathbb{C}\langle x,y\rangle$ is $\mathfrak{p}^{\oplus 2}/P$. Indeed, the actions of $x$ and $y$ on a flag of representations correspond to a pair $(A,B)\in \mathfrak{p}^{\oplus 2}$ of matrices. The stack $\mathcal{C}(d_1,\ldots, d_k)$ is obtained from $\mathfrak{p}^{\oplus 2}/P$ by imposing that the two matrices commute, so $\mathcal{C}(d_1,\ldots, d_k)=s_\mathfrak{p}^{-1}(0)/P$.   

There exist evaluation morphisms 
\begin{align*}
	\mathcal{C}(d_1) \times \cdots \times \mathcal{C}(d_k) \stackrel{q}{\leftarrow} 
	\mathcal{C}(d_1, \ldots, d_k) \stackrel{p}{\to} \mathcal{C}(d). 
	\end{align*}
The morphism $p$ is proper and $q$ is quasi-smooth. 
The above diagram for $k=2$
defines the categorical Hall product
\begin{align}\label{hall:ast}
	\ast=p_{\ast}q^{\ast} \colon 
	D^b(\mathcal{C}(d_1)) \boxtimes D^b(\mathcal{C}(d_2)) \to 
	D^b(\mathcal{C}(d))
	\end{align}
which is a special case of the two dimensional 
categorical Hall product defined by Porta--Sala~\cite{PoSa}. 
	
Let $T$ be the two-dimensional torus in (\ref{torus:T})
which acts on $\mathbb{C}^2$ 
by $(t_1, t_2) \cdot (x, y)=(t_1 x, t_2 y)$. 
It naturally induces an action on $\mathcal{C}(d)$. 
There is also a $T$-equivariant Hall product:
\begin{align}\label{hall:ast2}
	\ast=p_{\ast}q^{\ast} \colon 
	D^b_T(\mathcal{C}(d_1)) \boxtimes D^b_T(\mathcal{C}(d_2)) \to 
	D^b_T(\mathcal{C}(d)).
	\end{align}
Here, the box product is taken over $BT$. 
In what follows, whenever we take a box-product in the $T$-equivariant setting, 
we take it over $BT$.

\subsection{Subcategories \texorpdfstring{$\mathbb{T}(d)_v$}{Tdv}}\label{subsection:Tdv}
Let $i \colon \mathcal{C}(d) \hookrightarrow \mathcal{Y}(d)$ be the natural closed immersion. 
Define the full triangulated subcategory \[\widetilde{\mathbb{T}}(d)_v\subset D^b(\mathcal{Y}(d))\] 
generated by the vector bundles $\mathcal{O}_{\mathcal{Y}(d)}\otimes \Gamma_{GL(d)}(\chi)$
for a dominant weight $\chi$ satisfying
$\chi+\rho \in \textbf{W}(d)_{v}$. 
Define the full triangulated subcategory 
\begin{align}\label{def:N}
	\mathbb{T}(d)_v \subset D^b(\mathcal{C}(d))
	\end{align}
with objects $\mathcal{E}$ such that 
 $i_{\ast}\mathcal{E}$ is in $\widetilde{\mathbb{T}}(d)_v$.

It is proved in~\cite{P2} that the
subcategories (\ref{def:N}) form the building blocks 
of the category $D^b(\mathcal{C}(d))$:
\begin{thm}\cite[Corollary~3.3]{P2}\label{thm:com:sod}
We have the semiorthogonal decomposition 
\begin{align}\label{sod:C}
	D^b(\mathcal{C}(d))=\left\langle \mathbb{T}(d_1)_{v_1} \boxtimes 
	\cdots \boxtimes \mathbb{T}(d_k)_{v_k} \relmiddle|
	\begin{array}{c}
	v_1/d_1 < \cdots < v_k/d_k, \\
	d_1+\cdots+d_k=d
	\end{array} \right\rangle. 
	\end{align}
	Here each fully-faithful functor 
	\begin{align*}
	 \mathbb{T}(d_1)_{v_1} \boxtimes 
	\cdots \boxtimes \mathbb{T}(d_k)_{v_k}
	\hookrightarrow D^b(\mathcal{C}(d))
	\end{align*}
	is given by the categorical Hall product (\ref{hall:ast}). 
	\end{thm}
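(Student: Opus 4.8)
The plan is to adapt the three-step strategy from the proof of Proposition~\ref{prop1} to the full derived category of the quasi-smooth stack $\mathcal{C}(d)$, in place of the bounded window $\mathbb{D}(d;\delta)$. The conceptual engine is the dimensional reduction (Koszul duality) equivalence $D^b(\mathcal{C}(d)) \xrightarrow{\sim} \mathrm{MF}^{\mathrm{gr}}(\mathcal{X}(d), \Tr W)$ of \cite{I, Hirano}, under which $\mathbb{T}(d)_v$ corresponds to $\mathbb{S}^{\mathrm{gr}}(d)_v$ and the Hall product (\ref{hall:ast}) corresponds to the categorical Hall product for matrix factorizations; this is precisely why the definition of $\mathbb{T}(d)_v$ in Subsection~\ref{subsection:Tdv} uses the three-loop polytope $\mathbf{W}(d)$ even though $\mathcal{Y}(d)$ is the two-loop stack. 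Working through $i_* \colon D^b(\mathcal{C}(d)) \to D^b(\mathcal{Y}(d))$ and the Koszul differential (\ref{diff:ds2}), the combinatorics of $T(d)$-weights is governed by $\mathbf{W}(d)$ exactly as for $\mathcal{X}(d)$, so Proposition~\ref{bbw} and Proposition~\ref{rgoesdown} apply without change.

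First I would show that, for a partition $A=(d_i,v_i)_{i=1}^k$ with $v_1/d_1 < \cdots < v_k/d_k$, the Hall product restricts to a functor $\mathbb{T}(d_1)_{v_1} \boxtimes \cdots \boxtimes \mathbb{T}(d_k)_{v_k} \to D^b(\mathcal{C}(d))$ landing in the correct slice. As in Step~1 of Proposition~\ref{prop1}, this reduces to a weight estimate: applying Proposition~\ref{bbw} to $p_{\lambda*}q_\lambda^*\big(\mathcal{O} \otimes \Gamma_L(\sum_i \chi_i)\big)$ and verifying that for every admissible subset $J$ of negative $\lambda$-weights one has $(\sum_i \chi_i - \sigma_J)^+ + \rho \in \mathbf{W}(d)_w$. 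Relative to Proposition~\ref{prop1}, the absence of the framing vertex and of the shift $\delta$ removes both the polytope $\mathbf{V}(d)$ and the two-sided bound, so that only the single ordering condition remains; that this ordering is forced is exactly Proposition~\ref{prop:dominanttree}.

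Next I would prove fully-faithfulness of this functor and semiorthogonality of the essential images for distinct admissible partitions. This is the Ext-vanishing estimate of \cite[Proposition~4.3]{P}, transported to the two-loop derived setting via $i_*$, and it is where the ordering $v_1/d_1 < \cdots < v_k/d_k$ dictates the direction of vanishing. For generation, I would run the induction on the $(r,p)$-invariant of $\chi+\rho$ with respect to $\mathbf{W}(d)$ as in Step~3 of Proposition~\ref{prop1}: for a dominant $\chi$, the tree-of-partitions decomposition of Subsection~\ref{dectree2} produces the associated partition $A=(d_i,w_i)$ whose $A'=(d_i,v_i)$ satisfies the ordering condition; the natural morphism analogous to (\ref{mor:C}) realizes $\mathcal{O}_{\mathcal{C}(d)} \otimes \Gamma_{GL(d)}(\chi)$ up to a cone generated by the $\Gamma_{GL(d)}((\chi-\sigma_J)^+)$, whose $(r,p)$-invariants are strictly smaller by Proposition~\ref{rgoesdown}. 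Iterating and using that the $k=1$ pieces are the $\mathbb{T}(d_i)_{v_i}$ themselves shows that the Hall products generate $D^b(\mathcal{C}(d))$.

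The hard part will be the interplay between the induction and the derived structure of $\mathcal{C}(d)$: unlike the window $\mathbb{D}(d;\delta)$, the target is the full, infinite-type derived category with no upper polytope bound, so one must check both that the objects $\mathcal{O}_{\mathcal{C}(d)} \otimes \Gamma_{GL(d)}(\chi)$ for all dominant $\chi$ genuinely generate $D^b(\mathcal{C}(d))$ and that the cones produced in the generation step remain in $D^b(\mathcal{C}(d))$ and respect the weight filtration imposed by the Koszul differential (\ref{diff:ds2}). The cleanest way to control this is to carry out the entire argument on the matrix factorization side $\mathrm{MF}^{\mathrm{gr}}(\mathcal{X}(d), \Tr W)$, where $\mathcal{X}(d)$ is genuinely smooth and the three-loop polytope $\mathbf{W}(d)$ is the intrinsic one, and then transport the resulting semiorthogonal decomposition back across dimensional reduction, the compatibility of which with Hall products is formal in the manner of \cite[Proposition~2.1]{P0}.
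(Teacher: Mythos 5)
The paper itself contains no proof of this statement---it is imported from \cite[Corollary~3.3]{P2}---and your proposal reconstructs essentially the argument of that reference: establish the unframed analogue of Proposition~\ref{prop1}, namely the semiorthogonal decomposition of $D^b(\X(d))$ into Hall products of the categories $\mathbb{M}(d_i)_{w_i}$ (the main theorem of \cite{P}, proved by exactly the three steps you describe), descend it to $\mathrm{MF}^{\mathrm{gr}}(\X(d), \Tr W)$ as in \cite[Proposition~2.1]{P0}, and transport it across the Koszul duality equivalence~(\ref{equiv:Phi}). Two corrections to your bookkeeping: the compatibility of Koszul duality with Hall products is not ``formal in the manner of \cite[Proposition~2.1]{P0}'' but is the diagram~(\ref{com:hall}) (i.e.~\cite[Proposition~3.1]{P2}), and it holds only up to the determinant twist~(\ref{def:factordimred}); this twist effects the transformation $w_i \mapsto v_i = w_i + d_i\left(\sum_{j>i}d_j-\sum_{j<i}d_j\right)$, which is exactly what converts the indexing on the matrix-factorization side into the slope condition $v_1/d_1<\cdots<v_k/d_k$ of~(\ref{sod:C}) (compare Lemma~\ref{lemma:N(d)}). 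Relatedly, your opening claim that Propositions~\ref{bbw} and~\ref{rgoesdown} apply ``without change'' on the two-loop stack via $i_*$ is inaccurate---the $r$-invariant and the polytope $\mathbf{W}(d)$ are intrinsic to the three-loop weight multiset $\mathcal{W}$---but this is harmless, since your final reduction of the whole argument to the matrix-factorization side, which is the route actually taken in \cite{P2}, renders that claim unnecessary.
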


	 The $T$-equivariant version $\mathbb{T}_T(d)_v \subset D^b_T(\mathcal{C}(d))$
	 is defined similarly to (\ref{def:N}). 
The argument in loc.~cit.~also shows the semiorthogonal decomposition
\begin{align}\label{sod:C2}
	D^b_T(\mathcal{C}(d))=\left\langle \mathbb{T}_T(d_1)_{v_1} \boxtimes 
	\cdots \boxtimes \mathbb{T}_T(d_k)_{v_k} \relmiddle|
	\begin{array}{c}
	v_1/d_1 < \cdots < v_k/d_k, \\
	d_1+\cdots+d_k=d
	\end{array} \right\rangle. 
	\end{align}
Here the box product $\mathbb{T}_T(d_1)_{v_1} \boxtimes 
	\cdots \boxtimes \mathbb{T}_T(d_k)_{v_k}$ is taken over $BT$.
\subsection{Constructions of objects in \texorpdfstring{$\mathbb{T}(d)_v$}{TD}}
For each $(d, v)$ we construct 
an object $\mathcal{E}_{d, v} \in \mathbb{T}(d)_v$, 
which also gives an object in $\mathbb{T}_T(d)_v$, 
using the closed substack $\mathcal{Z} \subset \mathcal{C}(1, 1, \ldots, 1)$
defined as follows. 
Let $\lambda$ be the cocharacter 
\begin{align}\label{lambda:cochar}
	\lambda \colon \mathbb{C}^{\ast} \to T(d), \ 
	t \mapsto (t^d, t^{d-1}, \ldots, t). 
	\end{align}
	The attracting stack of $\mathcal{Y}(d)$
	with respect to $\lambda$ is given by 
	\begin{align}\label{mor:slambda}
	   \mathcal{Y}(d)^{\lambda \geq 0}:=
	   \left(\mathfrak{g}^{\lambda \geq 0}\right)^{\oplus 2}\Big/GL(d)^{\lambda \geq 0} 
	\end{align}
	where $GL(d)^{\lambda \geq 0} \subset GL(d)$ is the subgroup of 
	upper triangular matrices. 
Then the morphism 
(\ref{mor:s})
restricts to the morphism 
\begin{align}\label{mor:slambda2}
	s^{\lambda \geq 0} \colon 
	\mathcal{Y}(d)^{\lambda \geq 0} \to \mathfrak{g}^{\lambda \geq 0}
	\end{align}
whose derived zero locus 
$\mathcal{C}(d)^{\lambda \geq 0}$ is equivalent to $\mathcal{C}(1, \ldots, 1)$. 
Let $X=(x_{i,j})$ and $Y=(y_{i,j})$ be elements of $\mathfrak{g}^{\lambda \geq 0}$
for $1\leq i, j\leq d$, 
where $x_{i,j}=y_{i,j}=0$ for $i>j$. 
Then the 
equation $s^{\lambda \geq 0}(X, Y)=0$ is 
\begin{align*}
\sum_{i\leq a \leq j} x_{i,a}y_{a,j}=\sum_{i\leq a \leq j} y_{i,a}x_{a,j}, 
	\end{align*}
for each $(i, j)$ with $i\leq j$. 
We call the above equation $\mathbb{E}_{i, j}$. 
The equation $\mathbb{E}_{i, i}$ is 
$x_{i, i} y_{i, i}-y_{i, i} x_{i, i}=0$, which always holds but 
imposes a non-trivial derived structure on 
$\mathcal{C}(1, \ldots, 1)$. 
The equation $\mathbb{E}_{i, i+1}$ is 
\begin{align*}
	(x_{i,i}-x_{i+1, i+1})y_{i, i+1}-(y_{i,i}-y_{i+1, i+1})x_{i, i+1}=0. 
	\end{align*}
The above equation is satisfied if the following 
equation $\mathbb{F}_{i, i+1}$ is satisfied: 
\begin{align*}
	\{x_{i, i}-x_{i+1, i+1}=0, y_{i, i}-y_{i+1, i+1}=0\}. 
	\end{align*}
We define the closed derived substack 
\begin{align}\label{defZ}
	\mathcal{Z}:=\mathcal{Z}(d) \subset \mathcal{Y}^{\lambda \geq 0}(d)
	\end{align}
to be the derived zero locus of 
the equations $\mathbb{F}_{i, i+1}$ for all $i$ and 
$\mathbb{E}_{i, j}$ for all $i+2 \leq j$. We usually drop $d$ from the notation if the dimension is clear from the context.
Then $\mathcal{Z}$ is a closed substack of 
$\mathcal{C}(d)^{\lambda \geq 0}=\mathcal{C}(1, \ldots, 1)$. 
Note that, set theoretically, the closed substack $\mathcal{Z}$
corresponds to filtrations (\ref{filt:Q}) 
such that each $Q_i/Q_{i-1}$ is isomorphic to $\mathcal{O}_x$ for some $x \in \mathbb{C}^2$
independent of $i$. 

We have the diagram of attracting loci 
\begin{align*}
	\mathcal{C}(1)^{\times d}=\mathcal{C}(d)^{\lambda} \stackrel{q}{\leftarrow}
	\mathcal{C}(d)^{\lambda \geq 0} \stackrel{p}{\to} \mathcal{C}(d),
	\end{align*}
where $p$ is a proper morphism. We set
\begin{align}\label{def:mi}
	m_i :=\left\lceil \frac{vi}{d} \right\rceil -\left\lceil \frac{v(i-1)}{d} \right\rceil
	+\delta_i^d -\delta_i^1 \in \mathbb{Z},
	\end{align}
	where $\delta^j_i$ is the Kronecker delta function: $\delta^j_i=1$ if $i=j$ and $\delta^j_i=0$ otherwise.
		 For a weight
	 $\chi=\sum_{i=1}^d n_i \beta_i$ with $n_i \in \mathbb{Z}$, 
		    we denote by $\mathbb{C}(\chi)$
		    the one dimensional $GL(d)^{\lambda \geq 0}$-representation given by 
		    \begin{align*}
		       GL(d)^{\lambda \geq 0} \to GL(d)^{\lambda}=T(d) \stackrel{\chi}{\to} \mathbb{C}^{\ast},
		    \end{align*}
		    where the first morphism is the projection.
		    \begin{defn}\label{definition:Edw}
We define the complex $\mathcal{E}_{d, v}$ by 
\begin{align}\label{def:Edw}
\mathcal{E}_{d, v}:=p_{\ast}\left(\mathcal{O}_{\mathcal{Z}} \otimes 
\mathbb{C}(m_1, \ldots, m_d)\right)
\in D^b(\mathcal{C}(d)). 
	\end{align}
The construction above is $T$-equivariant, so we also obtain an object $\mathcal{E}_{d, w}\in D^b_T(\mathcal{C}(d))$.
	\end{defn}
	
	In the following lemma, we show that $\mathcal{E}_{d, v}$
	is indeed an object in $\mathbb{T}(d)_v$:

\begin{lemma}\label{lem:E}
	We have 
	$\mathcal{E}_{d, v} \in \mathbb{T}(d)_{v}$. 
	\end{lemma}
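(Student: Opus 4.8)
The plan is to unwind the definition of $\mathbb{T}(d)_v$: by construction it suffices to prove that $i_\ast\mathcal{E}_{d,v}\in D^b(\mathcal{Y}(d))$ lies in the subcategory $\widetilde{\mathbb{T}}(d)_v$ generated by the bundles $\mathcal{O}_{\mathcal{Y}(d)}\otimes\Gamma_{GL(d)}(\chi)$ with $\chi+\rho\in\mathbf{W}(d)_v$. Since $i$ and $p$ sit in a commutative square of attracting loci for the cocharacter $\lambda$ of (\ref{lambda:cochar}), I would first rewrite $i_\ast\mathcal{E}_{d,v}=p_{\lambda\ast}\bigl(\mathcal{O}_{\mathcal{Z}}\otimes\mathbb{C}(m_1,\dots,m_d)\bigr)$, now viewed as a pushforward along $p_\lambda\colon\mathcal{Y}(d)^{\lambda\geq0}\to\mathcal{Y}(d)$ of the \emph{smooth} stack $\mathcal{Y}(d)=\mathfrak{g}^{\oplus2}/GL(d)$, where Proposition~\ref{bbw} is available.

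Next I would compute this pushforward. By definition $\mathcal{Z}$ is the derived zero locus in $\mathcal{Y}(d)^{\lambda\geq0}$ of the equations $\mathbb{F}_{i,i+1}$ (of $T(d)$-weight $0$) and $\mathbb{E}_{i,j}$ for $j\geq i+2$ (of weight $\beta_j-\beta_i$), so $\mathcal{O}_{\mathcal{Z}}$ is resolved by the corresponding Koszul complex, whose terms are $\mathcal{O}_{\mathcal{Y}(d)^{\lambda\geq0}}\otimes\mathbb{C}(\psi_S)$ with $\psi_S=\sum_{(i,j)\in S}(\beta_i-\beta_j)$, $S\subseteq\{(i,j):j\geq i+2\}$ (the weight-$0$ generators contribute only homological shifts). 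Each term is pulled back from the fixed locus $\mathcal{Y}(d)^\lambda$, so Proposition~\ref{bbw} computes $p_{\lambda\ast}$ of it as a sum of $\mathcal{O}_{\mathcal{Y}(d)}\otimes\Gamma_{GL(d)}\bigl((m+\psi_S-\sigma_J)^+\bigr)$, where $m=\sum_k m_k\beta_k$ and $\sigma_J$ runs over multisets of the $\lambda$-negative weights $\beta_a-\beta_b$ ($a>b$, multiplicity two) of $\mathfrak{g}^{\oplus2}$.

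The grading part of the desired conclusion is immediate: telescoping the ceilings gives $\sum_k m_k=v$, so every weight above has diagonal weight $v$ and $i_\ast\mathcal{E}_{d,v}\in D^b(\mathcal{Y}(d))_v$. For the polytope condition I would use that $\mathbf{W}(d)_v$ is $\mathfrak{S}_d$-invariant and that $(\chi)^++\rho=w(\chi+\rho)$, which reduces membership to showing $m+\psi_S-\sigma_J+\rho\in\mathbf{W}(d)_v$ for each pair $(S,J)$ whose weight is \emph{regular}. Writing $\nu_A$ for the $\{0,1\}$-cocharacter of a subset $A\subseteq\{1,\dots,d\}$, the support-function description of $\mathbf{W}(d)_v$ turns this into the family of inequalities $\bigl|\langle\nu_A,\,m+\psi_S-\sigma_J+\rho-v\tau_d\rangle\bigr|\leq\tfrac32\,|A|(d-|A|)$. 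The role of the precise formula $m_i=\lceil vi/d\rceil-\lceil v(i-1)/d\rceil+\delta^d_i-\delta^1_i$ is exactly that the seed $m+\rho$ sits well inside $\mathbf{W}(d)_v$: the partial sums of $m+\rho-v\tau_d$ are controlled by roughly $\tfrac12|A|(d-|A|)$, leaving room of the same order to absorb the perturbations $\psi_S-\sigma_J$.

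The hard part, and the main obstacle, is precisely this last estimate, because a naive term-by-term bound is \emph{too weak}: already for $d=3$ there are regular pairs $(S,J)$ for which $m+\psi_S-\sigma_J+\rho$ leaves $\mathbf{W}(d)_v$. Hence one cannot conclude by placing each Borel--Weil--Bott summand in $\widetilde{\mathbb{T}}(d)_v$ individually; the extremal summands must cancel in the totalization of $Rp_{\lambda\ast}$ of the Koszul complex, through the Koszul differential. I therefore expect the cleanest route is to arrange the argument so that these cancellations are built in — either by checking the window condition on the genuine derived restrictions of $\mathcal{E}_{d,v}$ to the $\lambda'$-fixed loci for all cocharacters $\lambda'$ (working with the actual object rather than a generating complex, in the spirit of the proof of Proposition~\ref{magic}), or by an induction on $d$ along the tree-of-partitions decomposition of Subsection~\ref{dectree2} whose recursion matches that of the $m_i$. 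Carrying out the resulting arithmetic of the ceilings to show that all surviving weights of $i_\ast\mathcal{E}_{d,v}$ satisfy the subset inequalities is where the real work lies.
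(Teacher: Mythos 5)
Your overall strategy is the same as the paper's: rewrite $i_\ast\mathcal{E}_{d,v}$ as a pushforward from $\mathcal{Y}(d)^{\lambda\geq 0}$, resolve $\mathcal{O}_{\mathcal{Z}}$ by Koszul complexes, apply Proposition~\ref{bbw}, check the diagonal weight by telescoping the ceilings, and check membership in $\mathbf{W}(d)_v$ using Weyl invariance. However, there is a decisive sign error that derails your key estimate. The equation $\mathbb{E}_{i,j}$ (with $j\geq i+2$) is the $(i,j)$ entry of $[X,Y]$, and since $[tXt^{-1},tYt^{-1}]_{ij}=t_it_j^{-1}[X,Y]_{ij}$, it is a section of $\mathcal{O}(\beta_i-\beta_j)$, not a function of weight $\beta_j-\beta_i$ as you assert. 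Consequently the Koszul resolution of $\mathcal{O}_{\mathcal{Z}}$ has terms twisted by sums of $+(\beta_j-\beta_i)$ with $j\geq i+2$ (positive roots, multiplicity at most one), i.e.\ your $\psi_S$ has the wrong sign. This matters because these twists \emph{offset}, rather than compound, the twists $-\sigma_J$ coming from the pushforward along $p_\lambda$ (which are sums of $-(\beta_a-\beta_b)$, $a>b$, multiplicity at most two, from $\mathfrak{g}^{\oplus 2}$). With the correct signs, the coefficient of each positive root $\beta_i-\beta_j$ in $\chi-\sigma$ lies in $[-2,1]$ (and in $[-2,0]$ for adjacent roots, where it is compensated by the coefficient of $\chi$ in $(0,1]$ from (\ref{defchi})), so after adding $\rho$ every coefficient lies in $\left[-\tfrac32,\tfrac32\right]$, exactly inside $\mathbf{W}(d)$, and the diagonal weight is $v$. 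Thus the term-by-term argument closes with no slack and no cancellation: every Borel--Weil--Bott summand individually lies in $\widetilde{\mathbb{T}}(d)_v$, which is precisely how the paper concludes.

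Your claim that "already for $d=3$ there are regular pairs $(S,J)$ for which $m+\psi_S-\sigma_J+\rho$ leaves $\mathbf{W}(d)_v$" is an artifact of the flipped sign (with your convention one indeed gets coefficients as low as $-\tfrac52$); with the correct weights the $d=3$ check gives coefficients $\epsilon-a+\tfrac12\in\left[-\tfrac32,\tfrac32\right]$ for $\epsilon\in[0,1]$, $a\in[0,2]$. Since the remainder of your proposal defers the proof to uncarried-out alternatives (a window check on derived restrictions, or an induction matching the recursion of the $m_i$), the proposal as written does not establish the lemma: the "hard part" you identify is not actually hard once the sign is corrected, and no substitute argument is supplied.
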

\begin{proof}
Let $\mathcal{Y}(d)^{\lambda}$ be
the $\lambda$-fixed stack of $\mathcal{Y}(d)$
with respect to the cocharacter (\ref{lambda:cochar}). 
	As in Proposition~\ref{bbw}, we have the commutative diagram 
	\begin{align*}
		\xymatrix{
	\mathcal{C}(d)^{\lambda}
	\ar@<-0.3ex>@{^{(}->}[d]^-{i} & \mathcal{C}(d)^{\lambda\geq 0} \ar[r]^-{p} 
	\ar@<-0.3ex>@{^{(}->}[d]^-{i} \ar[l]_-{q}& \mathcal{C}(d) 
	\ar@<-0.3ex>@{^{(}->}[d]^-{i} \\
	\mathcal{Y}(d)^{\lambda}
	& \mathcal{Y}(d)^{\lambda\geq 0} \ar[r]^-{p} \ar[l]_-{q}& \mathcal{Y}(d)
	}
		\end{align*}
	where each vertical arrow is a closed immersion.
	We have 
	\begin{align*}
		i_{\ast}\mathcal{E}_{d, v}&=i_{\ast}p_{\ast}(\mathcal{O}_{\mathcal{Z}} \otimes \mathbb{C}(m_1, \ldots, m_d)) \\
		&\cong p_{\ast}(i_{\ast}\mathcal{O}_{\mathcal{Z}} \otimes q^{\ast}\mathcal{O}_{\mathcal{Y}(d)^{\lambda} }(m_1, \ldots, m_d)). 
			\end{align*}
		There is a sequence of closed substacks 
		\begin{align*}
			\mathcal{Z}=\mathcal{Z}_{d-1} \hookrightarrow 
			\mathcal{Z}_{d-2} \hookrightarrow \cdots \hookrightarrow 
			\mathcal{Z}_1 \hookrightarrow \mathcal{Y}(d)^{\lambda \geq 0}. 
			\end{align*}
		Here $\mathcal{Z}_1$ is the derived zero locus of the equations
		$\mathbb{F}_{i, i+1}$ for all $1\leq i \leq d-1$, 
		and $\mathcal{Z}_{l} \subset \mathcal{Z}_{l-1}$ for $2 \leq i \leq d-1$
		is the derived zero locus of 
		the equations $\mathbb{E}_{i, i+l}$ on $\mathcal{Z}_{l-1}$ for all $1\leq i\leq d-l$. 
		More precisely, let $\mathfrak{g}_l \subset \mathfrak{g}^{\lambda \geq 0}$ be the 
		sub $GL(d)^{\lambda \ge 0}$-representation
		defined by 
		\begin{align*}
		    \mathfrak{g}_l :=\{(x_{i,j}) \mid 1\leq i, j \leq d, x_{i,j}=0 \mbox{ for }
		    j\leq i+l\}.
		\end{align*}
		Then the morphism~\eqref{mor:slambda2} 		restricts to $\mathcal{Z}_1 \to \mathfrak{g}_1$. 
		The closed substack $\mathcal{Z}_l$ is defined by 
		the derived fiber product
		$\mathcal{Z}_l :=\mathcal{Z}_1 \times_{\mathfrak{g}_1} \mathfrak{g}_l$.
		So we have the sequence of squares 
		\begin{align*}
		    \xymatrix{
		    		    \cdots  \ar@<-0.3ex>@{^{(}->}[r] \ar@{}[rd]|\square&\mathcal{Z}_3 \ar[d]  \ar@<-0.3ex>@{^{(}->}[r] \ar@{}[rd]|\square &
		    \mathcal{Z}_2 \ar[d]  \ar@<-0.3ex>@{^{(}->}[r] \ar@{}[rd]|\square &
		    \mathcal{Z}_1 \ar[d]  \ar@<-0.3ex>@{^{(}->}[r] & \mathcal{Y}(d)^{\lambda \geq 0} \ar[d]^-{s^{\lambda \ge 0}} \\
		  \cdots  \ar@<-0.3ex>@{^{(}->}[r] & \mathfrak{g}_3   \ar@<-0.3ex>@{^{(}->}[r]&\mathfrak{g}_2   \ar@<-0.3ex>@{^{(}->}[r]&  \mathfrak{g}_1   \ar@<-0.3ex>@{^{(}->}[r]& \mathfrak{g}_0. 
		    }
		\end{align*}
		We note that the right square is not (derived or underived) Cartesian, 
		while the other squares are 
		derived Cartesians. 
		We also write $\mathcal{Z}_l=Z_l/GL(d)^{\lambda \geq 0}$
		for a closed subscheme $Z_l \subset (\mathfrak{g}^{\lambda \geq 0})^{\oplus 2}$. 
		
	Since the equations $\mathbb{F}_{i, i+1}$ are $GL(d)^{\lambda \geq 0}$-invariant, 
	they form a section
	$s_1$ of 
	$\mathcal{V}_1 :=\mathcal{O}^{\oplus 2(d-1)}$
	on $\mathcal{Y}(d)^{\lambda \geq 0}$. 
	Therefore 
	we have 
	\begin{align}\label{OZ1}
		\mathcal{Z}_1 \simeq \text{Spec}\left(\mathcal{O}_{\left(\mathfrak{g}^{\oplus 2}\right)^{\lambda\geq 0}}\left[\mathcal{V}_1^{\vee}[1]; d_{s_1}\right]\right)\Big/GL(d)^{\lambda\geq 0},
		\end{align}
		where the differential $d_{s_1}$ is induced by the section $s_1$.
		
		Suppose that $l\geq 2$. 
		We have the isomorphism of $GL(d)^{\lambda \geq 0}$-representations
		\begin{align*}
		    \mathfrak{g}_{l-1}/\mathfrak{g}_l =\bigoplus_{j=i+l}\mathbb{C}x_{i,j}
		    =\bigoplus_{j=i+l}\mathbb{C}(\beta_i-\beta_j).
		    \end{align*}
		   		    Thus the equation $\mathbb{E}_{i, i+l}$ on $\mathcal{Z}_{l-1}$ is an equation of a section of
	$\mathcal{O}_{\mathcal{Z}_{l-1}}(\beta_i-\beta_{i+l})$ for any $1\leq i\leq d-l$, so 
	these equation form a $GL(d)^{\lambda \geq 0}$-invariant section $s_{l}$ of the vector bundle  
	\begin{align*}
	    \mathcal{V}_{l}:=\bigoplus_{i=1}^{d-l}
	    \mathcal{O}_{Z_{l-1}}(\beta_i-\beta_{i+l})
	    \to Z_{l-1}. 
	\end{align*}
	Therefore
	we have 
	\begin{align}\label{OZ2}
	\mathcal{Z}_l \simeq 
	\Spec\left(\mathcal{O}_{Z_{l-1}}\left[\mathcal{V}_{l}^{\vee}[1]; d_{s_{l}} \right]  \right)
	/GL(d)^{\lambda \geq 0}. 
			\end{align}
	On the other hand, the closed substack
	$\mathcal{Y}^{\lambda \geq 0}(d) \hookrightarrow \left[\mathfrak{g}^{\oplus 2}/GL(d)^{\lambda \geq 0}\right]$
	is realized as a derived zero locus 
	with respect to the $GL(d)^{\lambda \geq 0}$-invariant section $s_0$ of the 
	vector bundle 
	\begin{align*}
	\mathcal{V}_0:=
	   \mathcal{O}_{\mathfrak{g}^{\oplus 2}}\otimes (\mathfrak{g}^{\lambda<0})^{\oplus 2}
	    =\bigoplus_{i>j}\mathcal{O}_{\mathfrak{g}^{\oplus 2}}(\beta_i-\beta_j)^{\oplus 2}
	    \to \mathfrak{g}^{\oplus 2}
	\end{align*}
	given by the projection 
	$\mathfrak{g} \twoheadrightarrow \mathfrak{g}^{\lambda<0}$. 
	Therefore 
	we have 
	\begin{align}\label{OY}
	\mathcal{Y}(d)^{\lambda \geq 0}
	\simeq \Spec \left(\mathcal{O}_{\mathfrak{g}^{\oplus 2}}\left[\mathcal{V}_{0}^{\vee}[1]; d_{s_{0}} \right]  \right)
	/GL(d)^{\lambda \geq 0}.
	\end{align}
		The map $p$ factors as follows:
	\begin{align}\label{fact:p}
		p \colon \mathcal{Y}(d)^{\lambda \geq 0} \hookrightarrow 
		\mathfrak{g}^{\oplus 2}/GL(d)^{\lambda \geq 0}  \stackrel{h}{\to} 
		\mathcal{Y}(d).
		\end{align}
		Consider a weight $\chi\in M$.
	Together with (\ref{OZ1}), (\ref{OZ2}) and (\ref{OY}), 
	the Borel-Weil-Bott Theorem, see Proposition \ref{bbw}, implies that 
	$p_{\ast}(i_{\ast}\mathcal{O}_{\mathcal{Z}} \otimes q^{\ast}\mathcal{O}(\chi))$ is resolved by 
	vector bundles of the form 
	\begin{align*}
		\mathcal{O}_{\mathcal{Y}(d)}\otimes\Gamma_{GL(d)}\left((\chi-\sigma)^+\right) 
		\end{align*}
	where $\sigma \in M$ is written as 
	\begin{align*}
		\sigma=\sum_{i>j}
	a_{i, j}(\beta_i-\beta_j), \
	-1 \leq a_{i, j} \leq 2, 
	0\leq a_{i+1, i} \leq 2. 
		\end{align*}
	We take $\chi$ to be
	\begin{equation}\label{defchi}
		\chi=\sum_{i=1}^d m_i \beta_i 
		=\sum_{i=1}^{d-1}\left(\frac{vi}{d}+1-\left\lceil \frac{vi}{d} \right\rceil   \right)(\beta_{i+1}-\beta_i)+\frac{v}{d}\sum_{i=1}^d \beta_i.
		\end{equation}
	Then each coefficient of $(\beta_{i+1}-\beta_i)$ lies in $(0, 1]$, 
	so 
	we have $\chi-\sigma+\rho \in \textbf{W}(d)_v$ and thus also $\left(\chi-\sigma\right)^++\rho \in \textbf{W}(d)_v$. 
	Therefore $i_{\ast}\mathcal{E}_{d, v}$ is an 
	object of the category $\widetilde{\mathbb{T}}(d)_v$, and so $\mathcal{E}_{d, v}$ is an object of $\mathbb{T}(d)_{v}$.
	\end{proof}

\subsection{Koszul duality}\label{subsection:Koszulduality}
Recall the stack $\mathcal{X}(d)$
and its regular function $\Tr W$ from (\ref{def:Wd}):
\begin{align}\label{funct:W}
	\Tr W \colon \mathcal{X}(d) \to \mathbb{C}, \ 
	(X, Y, Z) \mapsto \mathrm{Tr}(Z[X, Y]).
	\end{align}
	Recall the definition of graded matrix factorizations from Subsection \ref{gradingMF}. Consider the grading induced by the action of $\mathbb{C}^*$ on $\X(d)$ scaling the linear map corresponding to $Z$ with weight $2$.
The Koszul duality equivalence, also called dimensional reduction in the literature, gives the following:
\begin{thm}\cite{I, Hirano, T}
There is an equivalence 
\begin{align}\label{equiv:Phi}
	\Phi \colon D^b(\mathcal{C}(d)) \stackrel{\sim}{\to}
	\mathrm{MF}^{\mathrm{gr}}(\mathcal{X}(d), \Tr W). 
	\end{align}
	\end{thm}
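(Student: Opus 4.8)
The plan is to recognize the asserted equivalence as an instance of the dimensional reduction (Koszul duality) theorem of Isik and Hirano \cite{I, Hirano}, so that the real work lies entirely in matching the geometric data of our situation with the input of that theorem. The key structural feature is that $\Tr W = \Tr(Z[X,Y])$ is \emph{linear} in the $Z$-variable. Accordingly, I would take the smooth base stack to be $\mathcal{Y}(d) = \mathfrak{g}^{\oplus 2}/GL(V)$, parametrizing the pairs $(X,Y)$, and introduce the vector bundle $E := \mathfrak{g} \otimes \mathcal{O}_{\mathcal{Y}(d)}$ associated to the adjoint representation of $GL(V)$ on $\mathfrak{g}$. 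Its total space is
\[
\mathrm{Tot}(E) = \left(\mathfrak{g}^{\oplus 2} \times \mathfrak{g}\right)/GL(V) = \mathfrak{g}^{\oplus 3}/GL(V) = \mathcal{X}(d),
\]
with $Z$ the linear fiber coordinate.

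Next I would exhibit $\Tr W$ as the tautological pairing function attached to a section of the dual bundle. The trace form $\langle A, B\rangle = \Tr(AB)$ identifies $E^\dual \cong \mathfrak{g} \otimes \mathcal{O}_{\mathcal{Y}(d)}$ as $GL(V)$-representations, and under this identification the morphism $s$ of (\ref{mor:s}), namely $(X,Y) \mapsto [X,Y]$, is a section $s \in \Gamma(\mathcal{Y}(d), E^\dual)$. Then for all $(X,Y,Z)$ one has $\Tr W(X,Y,Z) = \langle Z, s(X,Y)\rangle = \Tr(Z[X,Y])$, so $\Tr W$ is exactly the fiberwise-linear function on $\mathrm{Tot}(E)$ determined by $s$. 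I would then check that the derived zero locus of $s$ (viewed as a section of $E^\dual$) coincides with $\mathcal{C}(d)$: the Koszul complex of $s$ has structure sheaf $\mathcal{O}_{\mathfrak{g}^{\oplus 2}}[\mathfrak{g}^\dual \otimes \mathcal{O}[1]; d_s]$, which is precisely the dg-ring (\ref{diff:ds2}) defining $\mathcal{C}(d) = s^{-1}(0)/GL(V)$.

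It then remains to match the gradings and invoke the theorem. The $\mathbb{C}^*$-action described in Subsection \ref{gradingMF}, scaling $Z$ with weight $2$, is exactly the fiberwise scaling of $E$ that makes $\Tr W$ homogeneous of weight $2$; this is the grading required on the matrix factorization side $\mathrm{MF}^{\mathrm{gr}}(\mathcal{X}(d), \Tr W)$. With the base $\mathcal{Y}(d)$ smooth, the bundle $E$, the section $s$, and the weight-$2$ grading all in place, the dimensional reduction theorem yields the equivalence $D^b(\mathcal{C}(d)) \xrightarrow{\sim} \mathrm{MF}^{\mathrm{gr}}(\mathcal{X}(d), \Tr W)$, which is $\Phi$.

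The main obstacle is not conceptual but a matter of verifying that the cited dimensional reduction equivalence, originally formulated for smooth quasi-projective varieties, applies verbatim in the present \emph{equivariant} and \emph{derived} setting, i.e.\ for the quotient stack $\mathfrak{g}^{\oplus 2}/GL(V)$ by the reductive group $GL(V)$ and for the genuinely derived zero locus $\mathcal{C}(d)$. I would handle this by passing to the $GL(V)$-equivariant version of the statement (as in \cite{Hirano, T}), checking that the $GL(V)$-equivariant Koszul resolution and the grading $\mathbb{C}^*$-action commute with the group action, so that the equivalence descends to the quotient stacks. Once these compatibilities are confirmed, the identification of the data above makes the conclusion immediate.
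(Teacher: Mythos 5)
Your proposal matches the paper's treatment: the paper also regards this as an instance of the Isik--Hirano--Toda dimensional reduction theorem, applied with base $\mathcal{Y}(d)=\mathfrak{g}^{\oplus 2}/GL(d)$, the bundle with fiber $\mathfrak{g}$ whose total space is $\mathcal{X}(d)$, the section $s(X,Y)=[X,Y]$ cutting out $\mathcal{C}(d)$ as a derived zero locus, and the weight-$2$ grading on $Z$ making $\Tr W$ homogeneous. The only thing the paper adds beyond your outline is the explicit description of the functor as tensoring with the Koszul factorization $\mathcal{K}$ (with quasi-inverse $\Hom_{\mathcal{X}(d)}(\mathcal{K},-)$), which is a formula for the equivalence rather than a different argument.
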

The equivalence (\ref{equiv:Phi}) is given by 
taking the tensor product over $\mathcal{O}_{\mathcal{C}(d)}$
with 
the Koszul factorization 
\begin{align*}
	\mathcal{K}:=(\mathcal{O}_{\mathcal{C}(d)} \otimes_{\mathcal{O}_{\mathcal{Y}(d)}}
	\mathcal{O}_{\mathcal{X}(d)}, d_{\mathcal{K}}), \ 
	d_{\mathcal{K}}=d_{s} \otimes 1 + \kappa,
	\end{align*}
	where $d_s$
	is the differential in (\ref{diff:ds2})
	and $\kappa \in \mathfrak{g}^{\vee} \otimes \mathfrak{g}$
	corresponds to $\id \in \Hom(\mathfrak{g}, \mathfrak{g})$
	(see~\cite[Equation (2.3.2)]{T}). 
A quasi-inverse of (\ref{equiv:Phi}) is given by 
$\Hom_{\mathcal{X}(d)}(\mathcal{K}, -)$. 
More explicitly, for $\mathcal{E} \in D^b(\mathcal{C}(d))$
we have 
\begin{align*}
    \Phi(\mathcal{E})=(\mathcal{E}\otimes_{\mathbb{C}}\mathrm{Sym}(\mathfrak{g}), d_{\Phi(\mathcal{E})}), \ d_{\Phi(\mathcal{E})}(u\otimes v)=d_{\mathcal{E}}(u) \otimes v +\sum_{i=1}^k 
    (u \cdot e_i^{\vee})
\otimes (e_i \cdot v)
\end{align*}
where $\{e_1, \ldots, e_k\}$ is a basis of $\mathfrak{g}$
and $\{e_1^{\vee}, \ldots, e_k^{\vee}\}$
is its dual basis of $\mathfrak{g}^{\vee}$. 

Define 
\begin{align*}
	 \mathbb{S}^{\text{gr}}(d)_{w} \subset \text{MF}^{\text{gr}}(\mathcal{X}(d), \Tr W)
	\end{align*} as in Subsection \ref{gradedMFdef}.
We have the following lemma. 
\begin{lemma}\label{lem:PhiTS}
The equivalence $\Phi$
in (\ref{equiv:Phi}) restricts to the equivalence  
\begin{align}\label{equiv:NM}
\Phi \colon \mathbb{T}(d)_v	\stackrel{\sim}{\to} \mathbb{S}^{\text{gr}}(d)_{w}, \ 
v=w. 
	\end{align}
 \end{lemma}
 \begin{proof}
     For $A \in D^b(\mathcal{C}(d))$, 
     the object $\Phi(A)$ is of the form 
     $i_{\ast}A \otimes_{\mathcal{O}_{\mathcal{Y}}(d)}
     \mathcal{O}_{\mathcal{X}}(d)$ with 
     some differential. 
     Thus, by Lemma~\ref{lem:MF(M)gr}, 
     the functor $\Phi$ restricts to 
     the functor $\mathbb{T}(d)_v \to \mathbb{S}^{\text{gr}}(d)_v$.
     Conversely, for $B \in \mathbb{S}^{\rm{gr}}(d)_v$, the complex
     $i_{\ast}\Phi^{-1}(B)$
     is of the form $B \otimes_{\mathcal{O}_{\mathcal{X}(d)}}\mathcal{K}^{\vee} \cong 
     0^{\ast}B$
     where $0$ is the zero section of the projection
     $\mathcal{X}(d) \to \mathcal{Y}(d)$, see~\cite[Equation (2.3.6)]{T}. Therefore, by Lemma~\ref{lem:MF(M)gr}, 
     the functor $\Phi^{-1}$ sends $\mathbb{S}^{\rm{gr}}(d)_v$ to $\mathbb{T}(d)_v$. 
 \end{proof}
Applying 
the functor $\Phi$ to $\mathcal{E}_{d, v}$
and setting $v=w$, 
we obtain the following: 
\begin{defn}\label{def:objF}
For $(d, w) \in \mathbb{N} \times \mathbb{Z}$, we define 
the following object 
\begin{align*}
    \mathcal{F}_{d, w} :=\Phi(\mathcal{E}_{d, w}) \in \mathbb{S}^{\mathrm{gr}}(d)_w. 
\end{align*}
Its image under the forgetful functor 
$\mathbb{S}^{\mathrm{gr}}(d)_w \to \mathbb{S}(d)_w$
is also denoted by $\mathcal{F}_{d, w} \in \mathbb{S}(d)_w$. 
We also obtain an object in the $T$-equivariant category $\mathcal{F}_{d, w} \in \mathbb{S}_T(d)_w$.
\end{defn}

\begin{example}\label{exam:F21}
	Let $V=\mathbb{C}^2$. 
	A straightforward computation shows that
	$\mathcal{F}_{2, 1} \in \mathbb{S}(2)_1$
	is explicitly given by
	\begin{align*}
		\mathcal{F}_{2, 1}=
		\left(\xymatrix{ V^{\oplus 4} \otimes \mathcal{O}_{\mathcal{X}(2)}
		 \ar@/^10pt/[r]^-{\delta} & V^{\oplus 4} \otimes \mathcal{O}_{\mathcal{X}(2)}
		 \ar@/^10pt/[l]^-{\delta^{\vee}} } \right)
		\end{align*}
	where over $(X, Y, Z) \in \mathfrak{g}^{\oplus 3}$, the maps
	$\delta$ and $\delta^{\vee}$ are given by 
	\begin{align*}
		\delta=\begin{pmatrix}
			-\widetilde{Z} & \widetilde{X} & \widetilde{Y} & -4 \\
			[Y, Z] & [X, Y] & 0 & \widetilde{Y} \\
			[Z, X] & 0 & [X, Y] & -\widetilde{X} \\
			0 & [Z, X] & [Z, Y] & -\widetilde{Z}
			\end{pmatrix}, \ 
			\delta^{\vee}=\begin{pmatrix}
			[X, Y] & \widetilde{X} & \widetilde{Y} & 0 \\
			[Z, Y] & \widetilde{Z} & 0 & \widetilde{Y} \\
			[X, Z] & 0 & \widetilde{Z} & -\widetilde{X} \\
			-\Tr W \cdot I & [X, Z] & [Y, Z] & [X, Y]
		\end{pmatrix}.
		\end{align*}
	Here for $X \in \mathfrak{g}$, we set 
	$\widetilde{X}:=2X-\Tr X \cdot I$. 
		\end{example}
		\begin{remark}
				By its construction, the complex $\mathcal{E}_{d, v}$ is supported on 
	the pull-back of the small diagonal of 
	the support map $\mathcal{C}oh(\mathbb{C}^2, d) \to \mathrm{Sym}^d(\mathbb{C}^2)$. 
	On the other hand, by the expression in Example~\ref{exam:F21}, the complex $\mathcal{F}_{2, 1}$ is symmetric 
	with respect to the permutation of $(X, Y, Z)$. 
	It follows that $\mathcal{F}_{2, 1}$
	is supported on the pull-back of the small diagonal 
	of the support map $\mathcal{C}oh(\mathbb{C}^3, d) \to \mathrm{Sym}^d(\mathbb{C}^3)$. 
	This is an analogue of Davison's support lemma for BPS sheaves~\cite[Lemma~4.1]{Dav}
	and will be proved for an arbitrary object in $\mathbb{S}(d)_w$
	with $\gcd(d, w)=1$ in~\cite{PT1}. 
		\end{remark}
	From the functorial properties of Koszul 
	duality equivalences in~\cite[Section~2.4]{T}, 
	they are compatible with
	the Hall products by the following commutative diagram (see~\cite[Proposition~3.1]{P2})
	\begin{align}\label{com:hall}
		\xymatrix{
	D^b(\mathcal{C}(d_1)) \boxtimes D^b(\mathcal{C}(d_2)) \ar[r]^-{\ast} \ar[d]^-{\widetilde{\Phi}} &
	D^b(\mathcal{C}(d)) \ar[d]^-{\Phi} \\
	  \text{MF}^{\text{gr}}(\mathcal{X}(d_1), \Tr W_{d_1})
	  \boxtimes  \text{MF}^{\text{gr}}(\mathcal{X}(d_2), \Tr W_{d_2})
	  \ar[r]^-{\ast} &  \text{MF}^{\text{gr}}(\mathcal{X}(d), \Tr W_d).
	}
	\end{align}
Here, the left arrow $\widetilde{\Phi}$ is the composition of Koszul duality equivalences (\ref{equiv:Phi}) with the 
tensor product of 
\begin{equation}\label{def:factordimred}
	\det((\mathfrak{g}^{\nu>0})^{\vee}(2))[-\dim \mathfrak{g}^{\nu>0}]
	=(\det V_1)^{-d_2} \otimes (\det V_2)^{d_1}[d_1 d_2].
	\end{equation}
	Here the cocharacter 
	$\nu \colon \mathbb{C}^{\ast} \to T(d)$
is $\nu(t)=(\overbrace{t, \ldots, t}^{d_1}, \overbrace{1, \ldots, 1}^{d_2})$, the vector spaces $V_i$ have 
$\dim V_i=d_i$ for $i=1, 2$, and 
	$(1)$ is a twist by the weight one $\mathbb{C}^{\ast}$-character, 
	which is isomorphic to the shift functor $[1]$
	of the category of graded matrix factorizations. 
As an application of the diagram (\ref{com:hall}), 
we have the following: 
\begin{lemma}\label{lemma:N(d)}
    The categorical Hall product (\ref{hall:ast})
    restricts to the functor 
    \begin{align*}
        \ast \colon 
        \mathbb{T}(d_1)_{v_1} \boxtimes 
        \mathbb{T}(d_2)_{v_2} \to \mathbb{T}(d)_v
    \end{align*}
    if $v_1/d_1=v_2/d_2=v/d$. 
\end{lemma}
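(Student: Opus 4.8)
The plan is to transport the statement to graded matrix factorizations through the Koszul duality equivalence and the compatibility diagram \eqref{com:hall}, and then to verify the resulting statement by a weight computation on $\X(d)$. First I would unwind the left vertical functor $\widetilde{\Phi}$ in \eqref{com:hall}. For $\mathcal{E}_i \in \mathbb{T}(d_i)_{v_i}$ we have $\Phi(\mathcal{E}_i) \in \mathbb{S}^{\mathrm{gr}}(d_i)_{v_i}$ by \eqref{equiv:NM}, and tensoring with the line bundle \eqref{def:factordimred} (up to shift) changes the diagonal $1_{d_i}$-weight of the $i$-th factor by $(\det V_1)^{-d_2}$ on the first factor and $(\det V_2)^{d_1}$ on the second. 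Using the equivalences \eqref{equiv:periodic2}, this shows that $\widetilde{\Phi}$ carries $\mathbb{T}(d_1)_{v_1}\boxtimes\mathbb{T}(d_2)_{v_2}$ into $\mathbb{S}^{\mathrm{gr}}(d_1)_{w_1}\boxtimes\mathbb{S}^{\mathrm{gr}}(d_2)_{w_2}$, where $(d_i,w_i)$ is exactly the partition $A$ whose associated $A'$ is $(d_i,v_i)$ under \eqref{w:prime}, namely $w_1=v_1-d_1d_2$ and $w_2=v_2+d_1d_2$. By \eqref{com:hall} we then have $\Phi(\mathcal{E}_1\ast\mathcal{E}_2)\simeq\widetilde{\Phi}(\mathcal{E}_1)\ast\widetilde{\Phi}(\mathcal{E}_2)$; since $w_1+w_2=v_1+v_2=v$ and $\Phi$ restricts to the equivalence \eqref{equiv:NM}, it suffices to prove that the Hall product on graded matrix factorizations sends $\mathbb{S}^{\mathrm{gr}}(d_1)_{w_1}\boxtimes\mathbb{S}^{\mathrm{gr}}(d_2)_{w_2}$ into $\mathbb{S}^{\mathrm{gr}}(d)_v$.

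Because $\mathbb{S}^{\mathrm{gr}}(d)_w=\mathrm{MF}^{\mathrm{gr}}(\mathbb{M}(d)_w,\Tr W)$ and the Hall product $p_{\lambda\ast}q_\lambda^\ast$ is computed on the underlying $GL(d)$-equivariant complexes that serve as the factors of a matrix factorization, it is then enough to check that the categorical Hall product \eqref{prel:hall} restricts to a functor $\mathbb{M}(d_1)_{w_1}\boxtimes\mathbb{M}(d_2)_{w_2}\to\mathbb{M}(d)_v$, as $\mathbb{M}(d)_v$ is a triangulated subcategory. Let $\lambda$ be the antidominant cocharacter of the partition $(d_1,d_2)$ and let $L=GL(d_1)\times GL(d_2)$. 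For generators take $\chi_i$ dominant with $\chi_i+\rho_i\in\textbf{W}(d_i)_{w_i}$, so that $\chi_i+\rho_i=\psi_i+w_i\tau_{d_i}$ with $\psi_i\in\textbf{W}(d_i)_0$. Writing $\chi=\chi_1+\chi_2$, the computation in Step~1 of the proof of Proposition~\ref{prop1} (with $\mu=0$) gives $\chi+\rho=\sum_i\psi_i+\frac{3}{2}\mathfrak{g}^{\lambda<0}+\sum_i v_i\tau_{d_i}$. Since $R(d)=\mathfrak{g}^{\oplus 3}$ is a symmetric $GL(d)$-representation, Proposition~\ref{bbw} expresses the Hall product $p_{\lambda\ast}q_\lambda^\ast(\mathcal{O}\otimes\Gamma_{L}(\chi))$ as a complex with terms $\mathcal{O}_{\X(d)}\otimes\Gamma_{GL(d)}\big((\chi-\sigma_J)^+\big)$, where $J$ runs over submultisets of $\{\beta\in\mathcal{W}\mid\langle\lambda,\beta\rangle<0\}$; so it suffices to show $(\chi-\sigma_J)^++\rho\in\textbf{W}(d)_v$ for every such $J$.

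The key point, and the only place the hypothesis $v_1/d_1=v_2/d_2=v/d$ enters, is that this equality forces $\sum_i v_i\tau_{d_i}=v\tau_d$; hence $\chi+\rho=\sum_i\psi_i+\frac{3}{2}\mathfrak{g}^{\lambda<0}+v\tau_d$ lies on $\partial\textbf{W}(d)_v$, with every cross-block root $\beta_a-\beta_b$ ($a$ in block $2$, $b$ in block $1$) appearing with coefficient exactly $\frac{3}{2}$. The weights $\beta\in\mathcal{W}$ with $\langle\lambda,\beta\rangle<0$ are precisely these cross-block roots, each of multiplicity $3$, so $\sigma_J$ subtracts an integer between $0$ and $3$ from each such coefficient, leaving it in $[-\frac{3}{2},\frac{3}{2}]$; the within-block part $\sum_i\psi_i$ and the $v\tau_d$-term are untouched, whence $\chi-\sigma_J+\rho\in\frac{3}{2}\,\mathrm{sum}[0,\beta_a-\beta_b]+v\tau_d=\textbf{W}(d)_v$. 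As $\textbf{W}(d)_v$ is invariant under the linear $\mathfrak{S}_d$-action and $(\chi-\sigma_J)^++\rho$ is a Weyl conjugate of $\chi-\sigma_J+\rho$, this gives $(\chi-\sigma_J)^++\rho\in\textbf{W}(d)_v$. The main obstacle is exactly this polytope bookkeeping: one must use that the equal-slope hypothesis places the generator on the boundary $\partial\textbf{W}(d)_v$, so that subtracting up to the full multiplicity $3$ of each negative root keeps the coefficients within the width-$3$ window of $\textbf{W}(d)$; for unequal slopes this sharp containment fails and one is forced onto the larger polytope $\textbf{V}(d)$ as in Proposition~\ref{prop1}.
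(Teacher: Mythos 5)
Your proposal is correct and takes essentially the same route as the paper's own proof: you reduce via the Koszul duality diagram \eqref{com:hall} and the equivalence \eqref{equiv:NM} to the graded matrix factorization side with $w_1=v_1-d_1d_2$, $w_2=v_2+d_1d_2$, and then run the same weight computation as \eqref{chi:W}, applying Proposition~\ref{bbw} and the observation that the equal-slope hypothesis gives $\sum_i v_i\tau_{d_i}=v\tau_d$ so that subtracting partial sums of cross-block roots (each of multiplicity $3$) keeps the weight in $\mathbf{W}(d)_v$. The only differences are presentational: you spell out the determinant-twist bookkeeping in $\widetilde{\Phi}$ and the Weyl-invariance step for $(\chi-\sigma_J)^+$, both of which the paper leaves implicit.
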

\begin{proof}
By the diagram (\ref{com:hall})
and the equivalence (\ref{equiv:NM}), 
it is enough to show that the bottom horizontal arrow in (\ref{com:hall})
	restricts to the functor 
	\begin{align}\label{com:hall2}
		\ast \colon \mathbb{S}^{\text{gr}}(d_1)_{w_1} \boxtimes 
		\mathbb{S}^{\text{gr}}(d_2)_{w_2} \to \mathbb{S}^{\text{gr}}(d)_{w}
		\end{align}
	for $w=v$, $w_1=v_1-d_1 d_2$ and $w_2=v_2+d_1 d_2$. 
	Let us take 
	\begin{align*}
		\chi_1+\rho_1 \in \textbf{W}(d_1)_{w_1}, \ 
		\chi_2+\rho_2 \in \textbf{W}(d_2)_{w_2}.
	\end{align*}
Then 
we have 
\begin{align}\label{chi:W}
    \chi_1+\chi_2+\rho &= (\chi_1+\rho_1)+(\chi_2+\rho_2)+(\rho-\rho_1-\rho_2) \\
 \notag&\in \textbf{W}(d_1)_{w_1}+\textbf{W}(d_2)_{w_2}
 +\frac{1}{2}\mathfrak{g}^{\nu<0} \\
 \notag&=\textbf{W}(d_1)_{v_1}+\textbf{W}(d_2)_{v_2}
 +\frac{3}{2}\mathfrak{g}^{\nu<0}.
    \end{align}
By Proposition~\ref{bbw}, 
the product $\left(\Gamma_{GL(d_1)}(\chi_1) \otimes \mathcal{O}_{\X(d_1)}\right) \ast 
\left(\Gamma_{GL(d_2)}(\chi_2) \otimes \mathcal{O}_{\X(d_2)}\right)$ 
for the 
categorical Hall product 
\begin{align*}
\ast \colon D^b(\mathcal{X}(d_1)) \boxtimes 
D^b(\mathcal{X}(d_2)) \to D^b(\mathcal{X}(d))
\end{align*}
is generated by the vector bundles
$\Gamma_{GL(d)}\left((\chi_1+\chi_2-\sigma_I)^+\right) \otimes \mathcal{O}_{\X(d)}$ for 
$\sigma_I$ partial sums of weights 
in $3(\mathfrak{g}^{\nu<0})$. 
More precisely, 
$\sigma_I=\sum_{\beta\in I}\beta$ for $I$ a submultiset of the multiset $\mathcal{W}$ which contains the weights $\beta_j-\beta_i$ for $1\leq i\leq d_1<j\leq d$ with multiplicity three.
By (\ref{chi:W}), we conclude that 
\begin{align*}
	\chi_1+\chi_2-\sigma +\rho \in \textbf{W}(d)_v=\textbf{W}(d)_w.
	\end{align*}
Therefore the bottom horizontal arrow in 
(\ref{com:hall}) restricts to the functor (\ref{com:hall2}).
\end{proof}

\subsection{Relation with shuffle algebra}\label{subsection:shuffle}
The $T$-equivariant Hall product \eqref{hall:ast2} induces an associative algebra structure 
\begin{align}\label{G:prod}
\ast \colon G_T(\mathcal{C}(d_1)) \otimes_{\mathbb{K}}G_T(\mathcal{C}(d_2)) \to G_T(\mathcal{C}(d)),
	\end{align}
	where $\mathbb{K}=\mathbb{Z}\left[q_1^{\pm 1}, q_2^{\pm 1}\right]$ is the Grothendieck 
group of $BT$ and $q_i$ are $T$-weights. 
This Hall product has been studied in~\cite{SV, N2, Zhao0}.
Let $i \colon \mathcal{C}(d) \hookrightarrow \mathcal{Y}(d)$ be the closed immersion. 
The pull-back 
via $\mathcal{Y}(d) \to BGL(d)$ 
gives the isomorphism 
\begin{align*}
    \bigoplus_{d\geq 0}K_T(BGL(d))=
\bigoplus_{d\geq 0}\mathbb{K}\left[z_1^{\pm 1}, \ldots, z_d^{\pm 1}\right]^{\mathfrak{S}_d}
\stackrel{\cong}{\to}
\bigoplus_{d\geq 0} K_T(\mathcal{Y}(d)). 
\end{align*}
Therefore the push-forward by $i$ induces the morphism 
\begin{align}\label{i:ast}
	i_{\ast} \colon 
	\bigoplus_{d\geq 0} G_T(\mathcal{C}(d)) \to 
\bigoplus_{d\geq 0}\mathbb{K}\left[z_1^{\pm 1}, \ldots, z_d^{\pm 1}\right]^{\mathfrak{S}_d}. 
	\end{align}
The product (\ref{G:prod}) is compatible with
the shuffle algebra on the right hand side of \eqref{i:ast}, defined as follows. 
Let $\xi(x)$ be defined by 
\begin{align*}
	\xi(x):=\frac{(1-q_1^{-1}x)(1-q_2^{-1}x)(1-q^{-1}x^{-1})}{1-x},
	\end{align*}
where $q:=q_1 q_2$. 
For $f \in \mathbb{K}\left[z_1^{\pm 1}, \ldots, z_a^{\pm 1}\right]$
and $g \in \mathbb{K}\left[z_{a+1}^{\pm 1}, \ldots, z_{a+b}^{\pm 1}\right]$, we set 
\begin{align}\label{def:shuffle}
	f \ast g:=\frac{1}{a! b!}
	\mathrm{Sym}\left(fg \cdot \prod_{\substack{1\leq i\le a,\\ a<j\leq a+b}}
	\xi(z_i z_j^{-1})\right),
	\end{align}
where we denote by $\mathrm{Sym}(h(z_1, \ldots, z_d))$ the sum of 
$h(z_{\sigma(1)}, \ldots, z_{\sigma(d)})$ for $\sigma \in \mathfrak{S}_d$. 
The following lemma should be well-known (cf.~\cite[Proposition~2.7]{N2}). 
We reprove it via Koszul duality: 
\begin{lemma}\label{lem:shuffle}
The morphism (\ref{i:ast}) is compatible with the above $\ast$-products. 
\end{lemma}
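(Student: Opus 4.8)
The plan is to reduce the claimed compatibility to a completely explicit computation in the Hall correspondence, using the Koszul (dimensional-reduction) resolution of $\mathcal{O}_{\mathcal{C}(d)}$ to make the pushforward $i_\ast$ in \eqref{i:ast} concrete. Fix a decomposition $d=d_1+d_2$ and let $\nu$ be the two-block cocharacter as in \eqref{def:factordimred}. I would first record the commuting diagram of Hall correspondences
\[
\xymatrix@C=2.2em@R=1.6em{
\mathcal{C}(d_1)\times\mathcal{C}(d_2) \ar@{^{(}->}[d] & \mathcal{C}(d)^{\nu\geq 0} \ar[l]_-{q}\ar[r]^-{p} \ar@{^{(}->}[d] & \mathcal{C}(d) \ar@{^{(}->}[d] \\
\mathcal{Y}(d)^{\nu} & \mathcal{Y}(d)^{\nu\geq 0} \ar[l]_-{q'}\ar[r]^-{p'} & \mathcal{Y}(d),
}
\]
where the vertical maps are the closed immersions cutting out the commuting loci as derived zero loci of $s$, $s^{\nu\geq 0}$, $s^{\nu}$, and where $p'$ is proper and $q'$ is the smooth affine-bundle projection of the two-loop quiver. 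Since $i$ commutes with proper pushforward, the right square gives $i_\ast(f\ast g)=p'_\ast\,(i_{\nu\geq 0})_\ast\, q^\ast(f\boxtimes g)$, so the whole identity is localized onto the left square together with $p'_\ast$.

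Next I would make $i_\ast$ explicit through the Koszul resolution, which is precisely the Koszul-duality input. Writing $\mathcal{O}_{\mathcal{C}(d)}=\big(\wedge^\bullet(\mathfrak{g}^\vee\otimes q^{-1}\otimes\mathcal{O}_{\mathcal{Y}(d)}),d_s\big)$ for the commutator section $s$ of weight $q=q_1q_2$, one obtains
\[
i_\ast\!\left[\mathcal{O}_{\mathcal{C}(d)}\otimes\Gamma_{GL(d)}(\chi)\right]
=\ch\Gamma_{GL(d)}(\chi)\cdot\prod_{1\leq i,j\leq d}\big(1-q^{-1}z_jz_i^{-1}\big)
\in\mathbb{K}[z_1^{\pm1},\dots,z_d^{\pm1}]^{\mathfrak{S}_d},
\]
i.e.\ $i_\ast$ is multiplication by the $K$-theoretic Euler class $\lambda_{-1}\big((\mathfrak{g}\otimes q)^\vee\big)$. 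On the smooth stack $\mathcal{Y}(d)$ the product $p'_\ast q'^\ast$ is the ordinary two-loop shuffle, whose kernel I would read off from Proposition~\ref{bbw}: the two loops $X,Y$ of weights $q_1,q_2$ contribute the numerator $(1-q_1^{-1}x)(1-q_2^{-1}x)$, while the off-diagonal group directions $\mathrm{Hom}(V_2,V_1)$ produce, via $p'_\ast$, the denominator $(1-x)^{-1}$, with $x=z_iz_j^{-1}$ for $i\leq d_1<j$.

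Finally I would assemble the kernel. The left square is \emph{not} derived Cartesian: $\mathcal{C}(d)^{\nu\geq 0}$ is cut out by $s^{\nu\geq 0}$ landing in all of $\mathfrak{g}^{\nu\geq 0}$, whereas pulling back $\mathcal{C}(d_1)\times\mathcal{C}(d_2)$ only imposes the $\mathfrak{g}^\nu$-part, and the discrepancy is exactly the excess bundle $\mathfrak{g}^{\nu>0}$ of weight $q$. Its Euler class supplies the remaining cross-block factor $\prod_{i\leq d_1<j}(1-q^{-1}z_iz_j^{-1})$, and combining the three sources produces precisely $\xi(z_iz_j^{-1})$ as in \eqref{def:shuffle}, the symmetrization $\tfrac{1}{a!\,b!}\mathrm{Sym}$ matching the induction from the Levi to $GL(d)$. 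The main obstacle is this excess-intersection step: one must control the non-Cartesian base change together with the twist \eqref{def:factordimred} coming from Koszul duality so that the monomial and shift ambiguities cancel exactly, leaving a single relation factor $(1-q^{-1}x^{-1})$ rather than a symmetric pair. I expect the cleanest bookkeeping is to verify the resulting kernel against $\xi$ as a rational-function identity after clearing denominators, which reduces the whole lemma to an elementary check once the three geometric contributions have been identified.
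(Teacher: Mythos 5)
Your proposal is correct in outline, but it takes a genuinely different route from the paper's. The paper proves Lemma \ref{lem:shuffle} by transferring everything through Koszul duality: the commutative diagram in its proof identifies $i_{\ast}$ of \eqref{i:ast} with the map $K_T(\mathrm{MF}^{\mathrm{gr}}(\mathcal{X}(d), \Tr W)) \to K_T(\mathrm{MF}^{\mathrm{gr}}(\mathcal{X}(d), 0))$, $(\alpha \colon \mathcal{F}\rightleftarrows \mathcal{G} \colon \beta) \mapsto [\mathcal{F}]-[\mathcal{G}]$, invokes \cite{P0} for the fact that the Hall product on $K_T(\mathrm{MF}^{\mathrm{gr}}(\mathcal{X}(d),0))$ is the shuffle with kernel $\xi'(x)=(1-q_1^{-1}x)(1-q_2^{-1}x)(1-qx)/(1-x)$, and then converts $\xi'$ into $\xi$ via the twist \eqref{def:factordimred} in the diagram \eqref{com:hall} and the one-line identity $\xi(x)/\xi'(x)=-q^{-1}x^{-1}$, which produces the prefactor $(-1)^{d_1d_2}q^{-d_1d_2}\prod z_i^{-1}z_j$. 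You instead stay entirely on the two-loop side and compute $i_\ast(f\ast g)$ by derived excess intersection: functoriality of proper pushforward localizes the problem to the attracting-locus square, whose failure to be derived Cartesian is measured precisely by the bundle $\mathfrak{g}^{\nu>0}$ twisted by the weight $q$ of the commutator section (indeed $[X,Y]^{\nu}=[X^{\nu},Y^{\nu}]$ since $[\mathfrak{g}^{\nu>0},\mathfrak{g}^{\nu>0}]=0$), and its K-theoretic Euler class multiplies the two-loop kernel $(1-q_1^{-1}x)(1-q_2^{-1}x)/(1-x)$ to give $\xi$. This is essentially the ``well-known'' direct argument the paper alludes to (cf.~\cite{N2}), and it buys self-containedness: no matrix factorizations, no potential, no imported results. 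What the paper's route buys is that the hard geometric work is already done elsewhere (\cite{P0}, \cite{P2}), so the proof reduces to a rational-function identity, and it makes transparent why the two kernels $\xi$ and $\xi'$ are exchanged by Koszul duality. One bookkeeping slip to fix: in the middle of your argument you write the excess factor as $\prod_{i\leq d_1<j}(1-q^{-1}z_iz_j^{-1})$, but the weights of $\mathfrak{g}^{\nu>0}\otimes q$ are $q(\beta_i-\beta_j)$ for $i\leq d_1<j$, so with the paper's conventions the Euler class is $\prod_{i\leq d_1<j}(1-q^{-1}z_i^{-1}z_j)$, i.e.\ the factor $(1-q^{-1}x^{-1})$ that you correctly state at the end---exactly the third factor of $\xi$ in \eqref{def:shuffle}.
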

\begin{proof}
By the construction of the Koszul duality equivalence,
we have the commutative diagram 
\begin{align*}
	\xymatrix{
	G_T(\mathcal{C}(d)) \ar[r]^-{i_{\ast}} \ar[d]_-{\Phi}^-{\cong}
	& K_T(\mathcal{Y}(d)) \ar[d]^-{\cong} & \ar[l]_-{\cong} \mathbb{K}[z_1^{\pm 1}, \ldots, 
	z_d^{\pm 1}]^{\mathfrak{S}_d} \ar@{=}[d] \\
	K_T(\text{MF}^{\text{gr}}(\mathcal{X}(d), \Tr W)) \ar[r] & 
		K_T(\text{MF}^{\text{gr}}(\mathcal{X}(d), 0)) & \ar[l]_-{\cong} \mathbb{K}[z_1^{\pm 1}, \ldots, 
		z_d^{\pm 1}]^{\mathfrak{S}_d}.
	}
	\end{align*}	
The middle vertical arrow is given by the pull-back of 
the projection $\mathcal{X}(d) \to \mathcal{Y}(d)$, 
and the bottom left arrow is given by 
\begin{align}\notag
(\alpha \colon \mathcal{F}\rightleftarrows \mathcal{G} \colon \beta)
\mapsto [\mathcal{F}]-[\mathcal{G}], 
	\end{align}
	see \cite[Proposition~3.6]{P0} that this assignment induces a map on Grothendieck groups and that it preserves the Hall product.
On the other hand, under the bottom right isomorphism, 
the Hall product on $K_T(\text{MF}^{\text{gr}}(\mathcal{X}(d), 0))$ is 
compatible with the shuffle product
(\ref{def:shuffle}) 
after replacing $\xi(x)$ with $\xi'(x)$ (see~\cite[Proposition~3.4]{P0}):
\begin{align*}
	\xi'(x):=\frac{(1-q_1^{-1}x)(1-q_2^{-1}x)(1-qx)}{1-x}. 
\end{align*}	
Then the lemma follows from the commutative diagram (\ref{com:hall})
together with the explicit computation
\begin{align*}
	(-1)^{d_1 d_2}q^{-d_1 d_2}\prod_{\begin{subarray}{c}
			1\leq i\leq d_1 \\
			d_1<j\leq d
			\end{subarray}}
		z_i^{-1} z_j \cdot 
		\prod_{\begin{subarray}{c}
				1\leq i\leq d_1 \\
				d_1<j\leq d
		\end{subarray}}\xi'(z_i z_j^{-1})=
	\prod_{\begin{subarray}{c}
			1\leq i\leq d_1 \\
			d_1<j\leq d
	\end{subarray}}\xi(z_i z_j^{-1}). 
	\end{align*}
	\end{proof}
\begin{remark}
	In~\cite{N2},
	two kinds of shuffle products (with respect to $\xi'$, $\xi$)
	are considered depending on a choice of some 
	line bundle. 
	They correspond to Hall products on 
	$\mathrm{MF}^{\mathrm{gr}}_T(\mathcal{X}(d), \Tr W)$, 
	$D^b_T(\mathcal{C}(d))$ respectively, which are related by Koszul duality.  
	\end{remark}

\subsection{Generators}\label{subsection:generators}

The object $\mathcal{E}_{d, v} \in \mathbb{T}_T(d)_v$ defined in (\ref{def:Edw})  determines a class 
$[\mathcal{E}_{d, v}] \in G_T(\mathcal{C}(d))$. 
\begin{lemma}\label{lem:iE}
	We have the equality 
	\begin{multline}\label{elem:E}
		i_{\ast}[\mathcal{E}_{d, v}] 
		=(1-q_1^{-1})^{d-1}(1-q_2^{-1})^{d-1}\cdot
	\\	
	\mathrm{Sym}\left(\frac{z_1^{m_1} \cdots z_d^{m_d}}{(1-q^{-1}z_1^{-1}z_2)\cdots 
			(1-q^{-1}z_{d-1}^{-1}z_d)} 
		\cdot \prod_{j>i}\xi(z_i z_j^{-1})  \right).
		\end{multline}
	Here the exponents $m_i$ for $1\leq i\leq d$ are given by \eqref{def:mi}. 
	\end{lemma}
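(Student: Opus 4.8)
The plan is to evaluate $i_\ast[\mathcal{E}_{d,v}]$ directly inside $K_T(\mathcal{Y}(d))=\mathbb{K}[z_1^{\pm 1},\dots,z_d^{\pm 1}]^{\mathfrak{S}_d}$ by pushing the class of $\mathcal{O}_{\mathcal{Z}}\otimes\mathbb{C}(m_1,\dots,m_d)$ through the classifying stack of the Borel and then applying the K-theoretic Weyl character (Atiyah--Bott) formula for the resulting flag-bundle pushforward. First I would use the commutative diagram of attracting loci from the proof of Lemma~\ref{lem:E} together with the factorization (\ref{fact:p}) to identify the composite $\mathcal{Z}\hookrightarrow\mathcal{C}(d)^{\lambda\geq 0}\xrightarrow{p}\mathcal{C}(d)\xrightarrow{i}\mathcal{Y}(d)$ with $\mathcal{Z}\hookrightarrow\mathfrak{g}^{\oplus 2}/\GL(V)^{\lambda\geq 0}\xrightarrow{h}\mathfrak{g}^{\oplus 2}/\GL(V)=\mathcal{Y}(d)$. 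Since the middle stack is smooth with $K_T(\mathfrak{g}^{\oplus 2}/\GL(V)^{\lambda\geq 0})=\mathbb{K}[z_1^{\pm 1},\dots,z_d^{\pm 1}]$, the projection formula reduces the lemma to computing $[\mathcal{O}_{\mathcal{Z}}]$ in this ring and to a formula for $h_\ast$, with the twist $\mathbb{C}(m_1,\dots,m_d)$ contributing the monomial $z_1^{m_1}\cdots z_d^{m_d}$.

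Second, I would compute $[\mathcal{O}_{\mathcal{Z}}]$ from the iterated Koszul presentation (\ref{OZ1}), (\ref{OZ2}), (\ref{OY}) of Lemma~\ref{lem:E}, where each derived cutting multiplies the class by $\lambda_{-1}$ of the dual of the relevant bundle. The section $s_0$ carving $\mathcal{Y}(d)^{\lambda\geq 0}$ out of $\mathfrak{g}^{\oplus 2}/\GL(V)^{\lambda\geq 0}$ takes values in the two copies $(\mathfrak{g}^{\lambda<0})^{\oplus 2}$ weighted by $q_1,q_2$, contributing $\prod_{j>i}(1-q_1^{-1}z_iz_j^{-1})(1-q_2^{-1}z_iz_j^{-1})$; the section $s_1$ given by the equations $\mathbb{F}_{i,i+1}$ takes values in a bundle of $T$-weights $q_1,q_2$, contributing the Weyl-invariant scalar $(1-q_1^{-1})^{d-1}(1-q_2^{-1})^{d-1}$; and the sections $s_\ell$ for $\ell\geq 2$, given by the commutator equations $\mathbb{E}_{i,i+\ell}$ of $T$-weight $q$, contribute $\prod_{j\geq i+2}(1-q^{-1}z_i^{-1}z_j)$. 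The upshot is
\[
[\mathcal{O}_{\mathcal{Z}}]=(1-q_1^{-1})^{d-1}(1-q_2^{-1})^{d-1}\prod_{j>i}(1-q_1^{-1}z_iz_j^{-1})(1-q_2^{-1}z_iz_j^{-1})\prod_{j\geq i+2}(1-q^{-1}z_i^{-1}z_j).
\]
Here it is crucial that $\mathcal{Z}$ is cut out using only $\mathbb{F}_{i,i+1}$ and $\mathbb{E}_{i,j}$ with $j\geq i+2$, so that neither the trivial diagonal equations $\mathbb{E}_{i,i}$ nor the first off-diagonal equations $\mathbb{E}_{i,i+1}$ impose derived structure.

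Third, I would invoke the Weyl character formula for the flag-type pushforward, $h_\ast(f)=\mathrm{Sym}\big(f\cdot\prod_{i<j}(1-z_iz_j^{-1})^{-1}\big)$, whose denominator is the K-theoretic Euler class of the relative (co)tangent bundle of $\GL(V)/\GL(V)^{\lambda\geq 0}$; this is exactly the denominator $1-z_iz_j^{-1}$ of $\xi$. Pulling the Weyl-invariant scalar outside $\mathrm{Sym}$, the $s_0$-factors become the numerators $(1-q_1^{-1}x)(1-q_2^{-1}x)$ and the $s_\ell$-factors the numerators $(1-q^{-1}x^{-1})$ of $\prod_{j>i}\xi(z_iz_j^{-1})$, with the single discrepancy that the geometric product ranges over $j\geq i+2$ while $\xi$ supplies all $j>i$; the missing $j=i+1$ factors $(1-q^{-1}z_i^{-1}z_{i+1})$ are precisely the denominator $\big[(1-q^{-1}z_1^{-1}z_2)\cdots(1-q^{-1}z_{d-1}^{-1}z_d)\big]^{-1}$ in (\ref{elem:E}). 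Assembling these matches the stated formula, and the rank-one case $d=1$, where $\mathcal{Z}$ carries no cutting equations and $h_\ast$ is the identity, gives the consistency check $i_\ast[\mathcal{E}_{1,v}]=z_1^{m_1}=z_1^{v}$.

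The main obstacle will be the weight bookkeeping in the second and third steps: I must consistently use the $T$-weights of the \emph{values} of each defining equation (an element of $\mathfrak{g}$, hence weight $q_1$, $q_2$, or $q$) rather than of the coordinate functions, so that the $\mathbb{F}$-factors appear as $(1-q_1^{-1})$, $(1-q_2^{-1})$, and I must verify that the Euler-class denominator of $h_\ast$ aligns with the $\xi$-denominators after symmetrization. Beyond this, the only delicate combinatorial point is confirming that the reindexing of the $q$-weighted commutator factors from $\{\,j\geq i+2\,\}$ to $\{\,j>i\,\}$ is compensated exactly by the denominator in the statement.
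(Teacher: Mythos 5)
Your proposal is correct and follows essentially the same route as the paper's proof: both track the $T$-weights through the iterated Koszul presentations (\ref{OZ1}), (\ref{OZ2}), (\ref{OY}) from Lemma~\ref{lem:E} to get the class of $\mathcal{O}_{\mathcal{Z}}\otimes\mathbb{C}(m_1,\ldots,m_d)$ on $\mathfrak{g}^{\oplus 2}/GL(V)^{\lambda\geq 0}$, then apply the symmetrization formula for $h_{\ast}$ (the paper cites \cite[Proposition~3.4]{P0} for this) and absorb the mismatch between the commutator factors indexed by $j\geq i+2$ and the full range $j>i$ in $\xi$ into the denominator $(1-q^{-1}z_1^{-1}z_2)\cdots(1-q^{-1}z_{d-1}^{-1}z_d)$. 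Your weight bookkeeping, including the scalar $(1-q_1^{-1})^{d-1}(1-q_2^{-1})^{d-1}$ from the $\mathbb{F}_{i,i+1}$ equations, matches the paper exactly.
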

\begin{proof}
Consider the weight $\chi=\sum_{i=1}^d m_i\beta_i$ for $m_i$ 
given in (\ref{def:mi}). Denote by $\mathcal{O}$ the structure sheaf of 
		$\mathfrak{g}^{\oplus 2}/GL(d)^{\lambda \geq 0}$ and recall the morphism $h$ given in (\ref{fact:p}). 
	By taking account of the $T$-weights from 
	the argument used to prove Lemma~\ref{lem:E}, 
	the element $i_{\ast}[\mathcal{E}_{d, v}]$
	is given by 
	\begin{align*}
		&	i_{\ast}[\mathcal{E}_{d, v}] 
		=\left(1-q_1^{-1}\right)^{d-1}\left(1-q_2^{-1}\right)^{d-1}\cdot \\
		& h_{\ast}\left(\mathcal{O}\left(\chi\right)\prod_{j\geq i+2}(1-q^{-1}\mathcal{O}(\beta_j-\beta_i))
		\prod_{j>i}(1-q_1^{-1}\mathcal{O}(\beta_i-\beta_j))
		(1-q_2^{-1}\mathcal{O}(\beta_i-\beta_j))   \right),
		\end{align*}
		where the weights $\left(1-q_1^{-1}\right)^{d-1}\left(1-q_2^{-1}\right)^{d-1}$ appear by \eqref{OZ1}, the weights of the first product 
		\begin{align*}
		  \prod_{j\geq i+2}(1-q^{-1}\mathcal{O}(\beta_j-\beta_i))  
		\end{align*}
				appear by \eqref{OZ2}, and the weights of the second product 
				\begin{align*}
			\prod_{j>i}(1-q_1^{-1}\mathcal{O}(\beta_i-\beta_j))
		(1-q_2^{-1}\mathcal{O}(\beta_i-\beta_j))  	
		\end{align*}
				appear by \eqref{OY}.
				
	The push-forward $h_{\ast}$ 
	is given by symmetrization of the product
	with 
	$\prod_{j>i}(1-z_i z_j^{-1})^{-1}$ (see the proof of~\cite[Proposition~3.4]{P0}), 
	so we obtain the desired equality. 
	\end{proof}

Note that by Lemma~\ref{lemma:N(d)}, 
the product $\mathcal{E}_{d_1, v_1}
\ast \cdots \ast \mathcal{E}_{d_k, v_k}$
for $v/d=v_i/d_i$ for $1\leq i\leq k$
is an object of $\mathbb{T}(d)_{v}$. The result of Lemma~\ref{lemma:N(d)} also holds in the $T$-equivariant case, so the above product is also an object of $\mathbb{T}_T(d)_{v}$.
Let $\mathbb{F}=\mathbb{Q}(q_1, q_2)$ be the fraction 
field of $\mathbb{K}$. 
We show that the above products generate 
$K_T(\mathbb{T}(d)_v)$ over $\mathbb{F}$: 
\begin{thm}\label{prop:gen}
	We have 
	\begin{align*}
		K_T(\mathbb{T}(d)_v) \otimes_{\mathbb{K}}\mathbb{F}=\bigoplus_{\begin{subarray}{c}
			(d, v)=(d_1, v_1)+\cdots+(d_k, v_k), \\
			v/d=v_i/d_i
		\end{subarray}}
	\mathbb{F} \cdot [\mathcal{E}_{d_1, v_1}] \ast \cdots \ast
	 [\mathcal{E}_{d_k, v_k}]. 
		\end{align*}
	Here $(d, v)=(d_1, v_1)+\cdots+(d_k, v_k)$ is an unordered
	partition of $(d, v)$. 
	In particular, we have that
	\[\dim_{\mathbb{F}} K_T(\mathbb{T}(d)_v) \otimes_{\mathbb{K}}\mathbb{F}=p_2(\gcd(d, v)).
	\]
	\end{thm}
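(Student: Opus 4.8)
The plan is to transport the statement to the shuffle algebra and read off the answer from Negu\c{t}'s basis. By Lemma~\ref{lem:shuffle} the pushforward $i_\ast$ is a homomorphism from the $T$-equivariant Hall algebra $\bigoplus_d G_T(\mathcal{C}(d))$ to the shuffle algebra, and by Lemma~\ref{lem:iE} the class $i_\ast[\mathcal{E}_{d,v}]$ is the explicit symmetric Laurent polynomial \eqref{elem:E}. Fix coprime $(d_0,v_0)$, write $(d,v)=n(d_0,v_0)$ with $n=\gcd(d,v)$, and put $s=v/d$. The unordered partitions of $(d,v)$ into summands of slope $s$ are exactly the $(d_i,v_i)=n_i(d_0,v_0)$ with $n_1+\cdots+n_k=n$, so there are $p_2(n)$ of them; by Lemma~\ref{lemma:N(d)} each Hall product $\mathcal{E}_{n_1d_0,n_1v_0}\ast\cdots\ast\mathcal{E}_{n_kd_0,n_kv_0}$ is an object of $\mathbb{T}(d)_v$ and hence defines a class in $K_T(\mathbb{T}(d)_v)\otimes_{\mathbb{K}}\mathbb{F}$.

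For the lower bound I would invoke \cite[Equation~(2.11)]{N}: under $i_\ast$ these Hall products become shuffle products of the elements \eqref{elem:E}, which Negu\c{t} shows are linearly independent over $\mathbb{F}$. Since $i_\ast$ intertwines the two products, the original $p_2(n)$ classes are linearly independent in $K_T(\mathbb{T}(d)_v)\otimes_{\mathbb{K}}\mathbb{F}$, so its dimension is at least $p_2(n)$.

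The upper bound is the heart of the matter, and it must be obtained intrinsically rather than from a count inside $K_T(\mathcal{C}(d))$, which is infinite dimensional. First, $\mathbb{T}(d)_v$ is finitely generated: the slice $\textbf{W}(d)_v$ is a bounded polytope, so only finitely many dominant $\chi$ satisfy $\chi+\rho\in\textbf{W}(d)_v$, whence $K_T(\mathbb{T}(d)_v)\otimes_{\mathbb{K}}\mathbb{F}$ is finite dimensional via the Koszul duality equivalence \eqref{equiv:NM}. I would then show that $i_\ast$ embeds $K_T(\mathbb{T}(d)_v)\otimes_{\mathbb{K}}\mathbb{F}$ into the slope subalgebra $\mathcal{B}_s$ of Negu\c{t}, whose graded piece in degree $(d,v)$ has dimension $p_2(n)$ by \cite[Equation~(2.11)]{N}. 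This requires two inputs. The first is that the weight condition $\chi+\rho\in\textbf{W}(d)_v$ defining the generators of $\mathbb{T}(d)_v$ becomes, after applying $i_\ast$ and the symmetrization with the factors $\xi$ in \eqref{def:shuffle}, exactly Negu\c{t}'s slope inequalities cutting out $\mathcal{B}_s$. The second is the injectivity of $i_\ast$ after inverting $\mathbb{K}$; this is the $K$-theoretic incarnation of the faithfulness of the shuffle presentation of the Hall algebra of $\mathbb{C}^2$ (Schiffmann--Vasserot~\cite{SV}, Negu\c{t}~\cite{N2}), and can be extracted from the localization theorem together with the fact that the Euler class of the section \eqref{mor:s} whose derived zero locus is $\mathcal{C}(d)$ is a non-zero-divisor after inverting $\mathbb{K}$. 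Granting these, $\dim_{\mathbb{F}}K_T(\mathbb{T}(d)_v)\otimes_{\mathbb{K}}\mathbb{F}\leq p_2(n)$.

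Combining the two bounds yields $\dim_{\mathbb{F}}K_T(\mathbb{T}(d)_v)\otimes_{\mathbb{K}}\mathbb{F}=p_2(n)=p_2(\gcd(d,v))$ and, together with the linear independence above, shows that the $p_2(n)$ Hall products form a basis. I expect the decisive difficulty to be the dictionary of the third paragraph: reconciling the polytope $\textbf{W}(d)_v$ that defines $\mathbb{T}(d)_v$ with Negu\c{t}'s combinatorial description of $\mathcal{B}_s$ is precisely where the explicit shuffle formula \eqref{elem:E} and \cite[Equation~(2.11)]{N} have to be matched, and where most of the bookkeeping with $T(d)$-weights is concentrated.
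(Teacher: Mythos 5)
Your lower bound is essentially correct and agrees with the paper: by Lemma~\ref{lem:shuffle} and Lemma~\ref{lem:iE} the equal-slope Hall products map under $i_\ast$ (and the homomorphism \eqref{mor:F}) to the elements $A_{n_1d_0,n_1v_0}\ast\cdots\ast A_{n_kd_0,n_kv_0}$, which are part of Negu\c{t}'s basis \eqref{basis:S'}, hence linearly independent. The genuine gap is your upper bound, where you name two inputs and prove neither, and both are problematic as stated. First, the finiteness claim is not formal: a thick subcategory generated by finitely many objects can have Grothendieck group of rank larger than the number of generators, because of direct summands (already $\mathrm{Perf}(B(\mathbb{Z}/2))$ is thickly generated by the regular representation while its $K_0$ has rank two); for the same reason you may not conclude that $i_\ast$ of the class of an arbitrary object of $\mathbb{T}(d)_v$ is a $\mathbb{K}$-combination of Weyl characters $s_\chi$ with $\chi+\rho\in\mathbf{W}(d)_v$. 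Second, the ``dictionary'' identifying $i_\ast\bigl(K_T(\mathbb{T}(d)_v)\otimes_{\mathbb{K}}\mathbb{F}\bigr)$ with the slope-$s$ piece of the shuffle algebra, which you yourself flag as the decisive difficulty, is precisely the hard content; no mechanism for proving it is offered, and proving it directly would amount to redoing the weight combinatorics of \cite{P}, \cite{P2}. Finally, your localization/Euler-class sketch for injectivity of $i_\ast$ does not work as stated: the self-intersection formula gives that $i^{\ast}i_{\ast}$ is multiplication by the K-theoretic Euler class, and a non-zero-divisor of the ring $K_T(\mathcal{Y}(d))\otimes_{\mathbb{K}}\mathbb{F}$ can still annihilate elements of the module $G_T(\mathcal{C}(d))\otimes_{\mathbb{K}}\mathbb{F}$; fortunately this injectivity (indeed, the isomorphism \eqref{isom:S} onto $\mathcal{S}$) is exactly \cite[Theorem~4.6]{N2} and can simply be cited, as the paper does.

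The ingredient you are missing, and which lets the paper bypass every one of these points, is the semiorthogonal decomposition \eqref{sod:C2} of the \emph{full} category $D^b_T(\mathcal{C}(d))$ (Theorem~\ref{thm:com:sod}, imported from \cite{P2}). It produces the direct sum decomposition \eqref{decompose:G} of $\bigoplus_{d}G_T(\mathcal{C}(d))$ indexed by strictly increasing slope sequences, with the single-slope summand equal to $K_T(\mathbb{T}(d)_v)$. On the other hand, \eqref{isom:S} together with \eqref{basis:S'} gives the basis \eqref{basis:S} of the same space consisting of the products $[\mathcal{E}_{d_1,v_1}]\ast\cdots\ast[\mathcal{E}_{d_k,v_k}]$ over non-decreasing slopes, and by Lemma~\ref{lemma:N(d)} each basis vector lies in exactly one summand of \eqref{decompose:G}, namely the one obtained by grouping its equal-slope blocks. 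Elementary linear algebra (a basis of a direct sum partitioned among the summands spans each summand) then yields simultaneously the upper bound, the spanning statement, and the dimension formula $p_2(\gcd(d,v))$, with no finiteness argument, no characterization of the image of $K_T(\mathbb{T}(d)_v)$ inside the shuffle algebra, and no polytope-to-slope dictionary. You should rebuild your upper bound on \eqref{sod:C2}; as it stands, your proposal defers the theorem to an unproved claim that is as hard as the theorem itself.
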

\begin{proof}
Let 
	\begin{align*}
		\mathcal{S} \subset \bigoplus_{d\geq 0} \mathbb{F}[z_1^{\pm 1}, \ldots, z_d^{\pm 1}]^{\mathfrak{S}_d}
		\end{align*}
	be the subalgebra for the shuffle product (\ref{def:shuffle})
	generated by $z_1^l$ for $l\in \mathbb{Z}$. 
	It is proved in~\cite[Theorem~4.6]{N2}
	that the morphism (\ref{i:ast}) induces an isomorphism of algebras
	\begin{align}\label{isom:S}
		i_{\ast} \colon \bigoplus_{d\geq 0}
		G_T(\mathcal{C}(d))\otimes_{\mathbb{K}}\mathbb{F}
		 \stackrel{\cong}{\to} \mathcal{S}. 
		\end{align}
		
	Consider the morphism 
	\begin{align}\label{mor:F}
	\bigoplus_{d\geq 0} \mathbb{F}[z_1^{\pm 1}, \ldots, z_d^{\pm 1}]^{\mathfrak{S}_d}
	\to \bigoplus_{d\geq 0} \mathbb{F}(z_1, \ldots, z_d)^{\mathfrak{S}_d}	
		\end{align}
	defined by 
	\begin{align*}
		f(z_1, \ldots, z_d) \mapsto f(z_1, \ldots, z_d) \cdot 
		\prod_{i\neq j}(1-q^{-1} z_i z_j^{-1})^{-1}. 
		\end{align*}
	Then (\ref{mor:F}) is an algebra homomorphism, where 
	the product on the right hand side is the shuffle product 
	(\ref{def:shuffle}) where we replace $\xi(x)$ with $w(x)$ defined by
	\begin{align*}
		w(x):=\frac{(1-q_1^{-1}x)(1-q_2^{-1}x)}{(1-x)(1-q^{-1}x)}. 
		\end{align*}
	Let $\mathcal{S}'$ be the subalgebra of the right hand side of (\ref{mor:F}), 
	generated by the elements of the form 
	\begin{equation}\label{Ambullet}
		A_{k_{\bullet}}:=
		\mathrm{Sym}\left(\frac{z_1^{k_1} \cdots z_d^{k_d}}{(1-q^{-1}z_1^{-1}z_2)\cdots 
			(1-q^{-1}z_{d-1}^{-1}z_d)} 
		\cdot \prod_{j>i}w(z_i z_j^{-1})  \right)
		\end{equation}
	for various $(k_1, \ldots, k_d) \in \mathbb{Z}^d$ and $d\geq 1$. 
	For $(d, v)\in\mathbb{N}\times\mathbb{Z}$, we set $A_{d, v}$ to be $A_{m_{\bullet}}$ 
	for the choice of
	$m_{\bullet}$ in (\ref{def:mi}). 
	It is proved in~\cite[Equation~(2.11)]{N}
	that we have 
	the following basis of $\mathcal{S}'$ as 
	a $\mathbb{F}$-vector space
	\begin{align}\label{basis:S'}
	\mathcal{S}'=\bigoplus_{v_1/d_1 \leq \cdots \leq v_k/d_k}
		\mathbb{F} \cdot A_{d_1, v_1} \ast \cdots \ast A_{d_k, v_k},
		\end{align}
		where the tuples $(d_i, v_i)_{i=1}^k$ appearing above
		are unordered for subtuples $(d_i, v_i)_{i=a}^b$
		with $v_a/d_a=\cdots=v_b/d_b$. 
	Since $z_1^l \mapsto A_{(l)}=A_{1,l}$ under the morphism (\ref{mor:F}), 
	it restricts to the morphism 
	$\mathcal{S} \to \mathcal{S}'$ which is obviously injective. 
	By Lemma~\ref{lem:iE}, we have 
	\begin{align*}
		(1-q_1^{-1})^{1-d}(1-q_2^{-1})^{1-d}
		[\mathcal{E}_{d, v}] \mapsto A_{d, v}
		\end{align*}
	under (\ref{mor:F}). 
	Therefore $\mathcal{S} \to \mathcal{S}'$ is also surjective, 
	hence it is an isomorphism \begin{equation}\label{SS'}
	    \mathcal{S} \stackrel{\cong}{\to}
	\mathcal{S}'.
	\end{equation}
	
	From (\ref{basis:S'}), 
	we have the following basis of $\mathcal{S}$
	as a $\mathbb{F}$-vector space
	\begin{align}\label{basis:S}
		\mathcal{S}=\bigoplus_{v_1/d_1 \leq \cdots \leq v_k/d_k}
		\mathbb{F} \cdot [\mathcal{E}_{d_1, v_1}] \ast \cdots \ast 
		[\mathcal{E}_{d_k, v_k}]. 
	\end{align}
On the other hand, the semiorthogonal decomposition \eqref{sod:C2}
gives the direct sum 
decomposition 
\begin{align}\label{decompose:G}
	\bigoplus_{d\geq 0} G_T(\mathcal{C}(d))
	=\bigoplus_{v_1/d_1<\cdots<v_k/d_k}
	K(\mathbb{T}_T(d_1)_{v_1} \boxtimes 
	\cdots \boxtimes \mathbb{T}_T(d_k)_{v_k}). 
	\end{align}
By comparing with (\ref{basis:S}) under the isomorphism (\ref{isom:S}), we obtain the desired claim. 
\end{proof}

\subsection{K-theoretic dimension formula}

Recall the categorical 
DT invariants from Definition~\ref{def:catDT2}. 
We consider the $K$-theoretic 
Donaldson-Thomas invariants, 
defined as the Grothendieck groups of the corresponding categories of matrix factorizations:
\begin{align*}
K_{\ast}(\mathcal{DT}^{\bullet}(d)), \ 
\ast \in \{\emptyset, T\}, \ \bullet \in \{\emptyset, \mathrm{gr}\}. 
	\end{align*}
	By~\cite[Corollary~3.13]{T4}, 
	we have the natural isomorphisms
	(which hold for all graded matrix factorizations): 
\begin{align}\label{isom:DTgrade}
K(\mathcal{DT}^{\mathrm{gr}}(d)) \stackrel{\cong}{\to} K(\mathcal{DT}(d)), \ 
K_T(\mathcal{DT}^{\mathrm{gr}}(d)) \stackrel{\cong}{\to} K_T(\mathcal{DT}(d)). 
\end{align}

Recall from Subsection \ref{partitions} that $p_3(d)$ is the number of plane partitions and that it satisfies MacMahon's formula (\ref{macmahon:formula}). 
Recall also that the DT invariants of $\mathbb{C}^3$ of degree $d$ are equal to $(-1)^dp_3(d)$, see \cite[Corollary 4.3]{BF}. 
We show that an analogous equality holds for localized $T$-equivariant Donaldson-Thomas invariants of $\mathbb{C}^3$.
Recall the object $\mathcal{F}_{d, w} \in \mathbb{S}(d)_w$ and $\mathbb{S}_T(d)_w$
defined in Definition~\ref{def:objF}. As a corollary of Theorem~\ref{prop:gen}, 
we have the following: 
\begin{cor}\label{thm:dim}
	We have 
	\begin{align}\label{basis:DT}
	    K_T(\mathcal{DT}(d)) \otimes_{\mathbb{K}}\mathbb{F}=
	    \bigoplus_{\begin{subarray}{c}d_1+\cdots+d_k=d \\
	    0\leq v_1/d_1 \leq \cdots \leq v_k/d_k <1
	    \end{subarray}} \mathbb{F} \cdot [\mathcal{F}_{d_1, w_1}] \ast \cdots \ast
	    [\mathcal{F}_{d_k, w_k}]
	\end{align} 	where the tuples 
	$(d_i, w_i)_{i=1}^k$
	appearing in the right hand side are 
unordered for 
   subtuples $(d_i, w_i)_{i=a}^b$ with 
   $v_a/d_a=\cdots =v_b/d_b$. In particular, we have 
	\begin{align}\label{dim:formula}
		\dim_{\mathbb{F}} K_T(\mathcal{DT}(d)) \otimes_{\mathbb{K}} \mathbb{F}
		=p_3(d). 
		\end{align}
	\end{cor}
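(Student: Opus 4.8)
The plan is to feed the semiorthogonal decomposition of Theorem~\ref{MacMahonthm} into K-theory, translate each summand to the commuting-variety side by Koszul duality, insert the basis theorem for $\mathbb{T}(d)_v$, and finish with a generating-function count reproducing MacMahon's formula. First I would reduce to the graded $T$-equivariant category: by the isomorphisms \eqref{isom:DTgrade} it suffices to compute $K_T\big(\mathcal{DT}^{\mathrm{gr}}(d)\big)\otimes_{\mathbb{K}}\mathbb{F}$. Choosing $0<\mu\ll1$ with $2k\mu\notin\mathbb{Z}$, the inequalities \eqref{i} sharpen to $0\le v_1/d_1<\cdots<v_k/d_k<1$, and the graded $T$-equivariant decomposition of Theorem~\ref{MacMahonthm} induces a direct sum
\[
K_T\big(\mathcal{DT}^{\mathrm{gr}}(d)\big)\otimes_{\mathbb{K}}\mathbb{F}=\bigoplus_{A}\;\bigotimes_{i=1}^k K_T\big(\mathbb{S}_T^{\mathrm{gr}}(d_i)_{w_i}\big)\otimes_{\mathbb{K}}\mathbb{F},
\]
the sum over tuples $A=(d_i,w_i)_{i=1}^k$ whose associated $(d_i,v_i)$, via \eqref{w:prime}, have strictly increasing slopes in $[0,1)$; here I use that K-theory turns a semiorthogonal decomposition into a direct sum and a box product over $BT$ into a tensor product over $\mathbb{F}$. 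The Koszul duality equivalence \eqref{equiv:NM} then identifies $\mathbb{S}_T^{\mathrm{gr}}(d_i)_{w_i}\simeq\mathbb{T}_T(d_i)_{w_i}$, so each factor becomes $K_T(\mathbb{T}_T(d_i)_{w_i})\otimes_{\mathbb{K}}\mathbb{F}$.

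Next I would insert the basis of Theorem~\ref{prop:gen}: inside block $i$ the space $K_T(\mathbb{T}_T(d_i)_{w_i})\otimes\mathbb{F}$ has the basis of Hall products $[\mathcal{E}_{e_1,u_1}]\ast\cdots$ of constant slope $u_j/e_j=w_i/d_i$, unordered among equal-slope factors, of cardinality $p_2(\gcd(d_i,w_i))$. Transporting these back through Koszul duality using the Hall-product compatibility \eqref{com:hall} turns each $\mathcal{E}_{e,u}$ into $\mathcal{F}_{e,u}$; crucially, the twist \eqref{def:factordimred} shifts the weight of each factor, so that an equal-(commuting-variety)-slope product becomes a product of $\mathcal{F}$'s whose parts all acquire the same \emph{global} slope $v_i/d_i$, equal to the outer slope of the block. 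Composing the within-block products with the between-block Hall functor of the decomposition (associativity of $\ast$), I obtain that $K_T(\mathcal{DT}(d))\otimes\mathbb{F}$ is freely spanned by the classes $[\mathcal{F}_{d_1,w_1}]\ast\cdots\ast[\mathcal{F}_{d_k,w_k}]$ of \eqref{basis:DT}: the strict inequalities between outer blocks and the equal-slope, unordered refinements inside them assemble into the single weakly increasing, equal-slope-unordered index set $0\le v_1/d_1\le\cdots\le v_k/d_k<1$.

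Finally I would count. A basis element is exactly a finite multiset of pairs $(e,v)$ with $e\ge1$, $0\le v\le e-1$, and $\sum e=d$: sorting by the slope $v/e\in[0,1)$ recovers the weakly increasing order, the equal-slope parts are unordered, and the remaining weights $w_i$ are determined by \eqref{w:prime}. A part of size $e$ therefore comes in exactly $e$ admissible "colors" $v\in\{0,\dots,e-1\}$, whence
\[
\sum_{d\ge0}\dim_{\mathbb{F}}\big(K_T(\mathcal{DT}(d))\otimes_{\mathbb{K}}\mathbb{F}\big)\,q^d=\prod_{e\ge1}\frac{1}{(1-q^e)^{e}}=M(q),
\]
which is MacMahon's formula \eqref{macmahon:formula}; extracting the coefficient of $q^d$ yields \eqref{dim:formula}.

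The main obstacle is the bookkeeping in the second step: one must track the determinantal twist \eqref{def:factordimred} carefully and verify that the equal commuting-variety slopes of Theorem~\ref{prop:gen} inside a block translate, after the weight shift, into parts of equal global slope $v_i/d_i$, so that the strict outer slopes and the equal inner slopes glue into the clean weakly increasing index set of \eqref{basis:DT} with no overcounting. The $\mathbb{F}$-linear independence across blocks needed for this is supplied by the direct-sum form of the decomposition, and the identification of the reindexed generators with the shuffle-algebra basis underlying Theorem~\ref{prop:gen} guarantees independence within each block; once the reindexing is pinned down, the color count $e$ and hence the product $\prod_{e\ge1}(1-q^e)^{-e}=M(q)$ is immediate.
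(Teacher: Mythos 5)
Your proposal follows the same route as the paper's proof of Corollary~\ref{thm:dim}: reduce to the graded $T$-equivariant category via \eqref{isom:DTgrade}, apply Theorem~\ref{MacMahonthm} with $0<\mu\ll 1$ so that the slopes lie in $[0,1)$, pass through Koszul duality \eqref{equiv:NM} to the categories $\mathbb{T}_T(d_i)$, insert the basis of Theorem~\ref{prop:gen}, and count. Two of your choices differ only cosmetically from the paper: you track the weight shift through the twist \eqref{def:factordimred} in the Hall-product compatibility \eqref{com:hall}, where the paper instead composes Koszul duality with the periodicity equivalence \eqref{equiv:periodic2} (note $v_i\equiv w_i \bmod d_i$, so $\gcd(d_i,v_i)=\gcd(d_i,w_i)$ and either bookkeeping works); and your count by multisets of pairs $(e,v)$, $0\leq v<e$, giving $\prod_{e\geq 1}(1-q^e)^{-e}=M(q)$, is a cleaner but equivalent version of the paper's computation \eqref{sum:A} via primitive slopes and the bijection $(k,a,b)\mapsto ka$.

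The one step you cannot take for granted is the assertion that K-theory ``turns a box product over $BT$ into a tensor product over $\mathbb{F}$.'' This is not a formal property of K-theory: the summands are subcategories generated under cones, shifts and direct summands by external products, and there is no general K\"unneth isomorphism $K(\mathbb{A}\boxtimes_{BT}\mathbb{B})\otimes_{\mathbb{K}}\mathbb{F}\cong K(\mathbb{A})\otimes_{\mathbb{K}}K(\mathbb{B})\otimes_{\mathbb{K}}\mathbb{F}$ for $\mathbb{K}$-linear dg-categories. Your appeal to the shuffle algebra does supply linear independence of the products, but the \emph{spanning} of each semiorthogonal summand by external products of the classes $[\mathcal{E}_{e,u}]$ is precisely what the K\"unneth claim would give, and it is this surjectivity that must be argued. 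The paper fills the hole by a comparison argument rather than by any K\"unneth formula: the total space $\bigoplus_d G_T(\mathcal{C}(d))\otimes_{\mathbb{K}}\mathbb{F}$ is identified with the shuffle algebra $\mathcal{S}$ by \eqref{isom:S}, which has the explicit basis \eqref{basis:S} of all weakly-increasing-slope products; each such product lies in its designated summand of the direct-sum decomposition \eqref{decompose:G} (equal-slope factors land in a single $\mathbb{T}_T$ by Lemma~\ref{lemma:N(d)}, and distinct-slope groups land in the image of the Hall functor defining the summand), so, the summands being independent, each summand must coincide with the span of the products assigned to it. This simultaneously yields spanning, independence, and the tensor-product decomposition you assumed; with this comparison inserted in place of the asserted K\"unneth step, your argument is complete.
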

\begin{proof}
We consider the category $\mathcal{DT}_T^{\text{gr}}(d)$ for the grading induced by the action of $\mathbb{C}^*$ on $\X(d)$ scaling the linear map corresponding to $Z$ with weight $2$.
	By Theorem~\ref{MacMahonthm} for $0<\mu<\frac{1}{d}$, 
	there is a semiorthogonal decomposition 
	\begin{align*}
		\mathcal{DT}_T^{\text{gr}}(d)=
			\left\langle \mathbb{S}_T^{\text{gr}}(d_1)_{w_1} \boxtimes 
			\cdots \boxtimes \mathbb{S}_T^{\text{gr}}(d_k)_{w_k} \relmiddle|
			\begin{array}{c}
				0\leq v_1/d_1 < \cdots < v_k/d_k<1, \\
				d_1+\cdots+d_k=d
			\end{array} \right\rangle.
		\end{align*}
		Here, the box product $\mathbb{S}_T^{\text{gr}}(d_1)_{w_1} \boxtimes 
			\cdots \boxtimes \mathbb{S}_T^{\text{gr}}(d_k)_{w_k}$ is taken over $BT$.
	Note that $\mathbb{S}^{\text{gr}}_T(d)_w$ is equivalent to $\mathbb{T}_T(d)_v$ for $v=w$ by the equivalence 
	(\ref{equiv:NM}). 
	Together with the fact that $v_i-w_i$ is divisible by $d_i$
	and the equivalence (\ref{equiv:periodic2}), 
	there is an equivalence 
	\begin{align*}
	    \mathbb{T}_T(d_1)_{v_1} \boxtimes \cdots \boxtimes 
		\mathbb{T}_T(d_k)_{v_k}
		\stackrel{\sim}{\to}	
		\mathbb{S}_T^{\text{gr}}(d_1)_{w_1} \boxtimes 
			\cdots \boxtimes \mathbb{S}_T^{\text{gr}}(d_k)_{w_k}. 
		\end{align*}
		By comparing (\ref{decompose:G}) with (\ref{sod:C}), 
	for $(d_1, v_1), \ldots, (d_k, v_k)$ 
	with $v_1/d_1<\cdots<v_k/d_k$, 
	we have 
	\begin{align*}
		K\left(\mathbb{T}_T(d_1)_{v_1} \boxtimes \cdots \boxtimes 
		\mathbb{T}_T(d_k)_{v_k}\right)\otimes_{\mathbb{K}}\mathbb{F}
		=\bigotimes_{i=1}^k K_T(\mathbb{T}(d_i)_{v_i}) \otimes_{\mathbb{K}}\mathbb{F},
		\end{align*}
		where the tensor product on the right hand side for $1\leq i \leq k$
		is over $\mathbb{K}$.
		Together with Theorem~\ref{prop:gen} and (\ref{isom:DTgrade}), we obtain (\ref{basis:DT}). 
		
	Recall that $p_2(m)$ is the number of partitions of $m$
	and that it satisfies the identity (\ref{macmahon:formula}). 
	Let $a_d:=\dim_{\mathbb{F}} K_T(\mathcal{DT}(d)) \otimes_{\mathbb{K}} \mathbb{F}$. 
	By the above arguments and by Theorem~\ref{prop:gen}, we have that
	\begin{align}\notag
	\sum_{d\geq 0} a_d q^d &=
		\sum_{0\leq v_1/d_1<\cdots<v_k/d_k<1}
		\prod_{i=1}^k p_2\left(\gcd(d_i, v_i)\right)q^{d_i} \\
		\label{sum:A}&=\prod_{\begin{subarray}{c}
				0\leq \mu<1 \\
		\mu=a/b, \ \gcd(a, b)=1
		\end{subarray}	
		}
	\prod_{k\geq 1}\frac{1}{1-q^{bk}}. 
		\end{align}
	For each $d \in \mathbb{Z}_{\geq 1}$, there is a bijection 
	\begin{align*}
		\{(k, a, b) \in \mathbb{Z}_{\geq 0}^3 : d=bk, \gcd(a, b)=1, 0\leq a<b\}
		\stackrel{\cong}{\to}
		\{0, 1, \ldots, d-1\}
		\end{align*}
	given by $(k, a, b) \mapsto ka$. In particular, the number of 
	the elements of the left hand side equals to $d$. 
	Therefore (\ref{sum:A})
	equals $M(q)$, hence the identity (\ref{dim:formula}) holds. 
	\end{proof}

	\bibliographystyle{amsalpha}
\bibliography{main}

\textsc{\small Tudor P\u adurariu: Columbia University, 
Mathematics Hall, 2990 Broadway, New York, NY 10027, USA.}\\
\textit{\small E-mail address:} \texttt{\small tgp2109@columbia.edu}\\

\textsc{\small Yukinobu Toda: Kavli Institute for the Physics and Mathematics of the Universe (WPI), University of Tokyo, 5-1-5 Kashiwanoha, Kashiwa, 277-8583, Japan.}\\
\textit{\small E-mail address:} \texttt{\small yukinobu.toda@ipmu.jp}\\

\end{document}